\documentclass[letterpaper]{siamart251104}
\usepackage[
  letterpaper,
  textwidth=5.125in,
  textheight=598pt,
  centering,
  includeheadfoot,
  voffset=0pt
]{geometry}

\usepackage{amssymb}
\usepackage{amsmath}
\usepackage{bbm}
\usepackage{mathtools}
\usepackage[svgnames]{xcolor}
\usepackage{graphicx}
\usepackage{adjustbox}
\usepackage{placeins}
\usepackage{lipsum}
\usepackage[english]{babel}
\usepackage[utf8]{inputenc}
\usepackage{cancel}
\usepackage{subcaption}
\usepackage{tikz}
\usepackage{tikz-cd}
\usetikzlibrary{arrows.meta, positioning}
\graphicspath{{./}}
\usepackage{algpseudocode}

\usetikzlibrary{calc}
\definecolor{ink}{HTML}{253C4A}
\definecolor{cone}{HTML}{6B95B0}
\definecolor{section}{HTML}{E6A342}
\definecolor{sectionedge}{HTML}{A46616}
\definecolor{axis}{HTML}{72828C}

\newsiamremark{example}{Example}

\DeclareMathOperator*{\argmax}{arg\,max}
\DeclareMathOperator*{\argmin}{arg\,min}

\DeclareMathOperator{\sign}{sgn}
\DeclareMathOperator{\diag}{diag}
\DeclareMathOperator{\conv}{Conv}

\DeclareMathOperator{\Stab}{Stab}

\DeclareMathOperator{\Tr}{Tr}

\newcommand{\lr}[2]{\left\langle #1 , #2 \right\rangle}
\newcommand{\p}[1]{\mathbb{P} \big ( #1 \big )}

\newcommand{\T}{\mathsf{T}}
\newcommand{\1}{1_n}

\title{Certifiably Correct Fr\'echet Means in Orbit Spaces}

\author{
  Dustin G. Mixon\thanks{Department of Mathematics, The Ohio State University}
  \and
  Tom Needham\thanks{Department of Mathematics, Florida State University}
  \and
  Nathan Willey\footnotemark[1]
}

\headers{Certifiably Correct Fréchet Means}{D.G. Mixon, T. Needham, and N. Willey}

\begin{document}

\maketitle

\begin{abstract}
We study Fr\'echet means in orbit spaces of finite orthogonal group actions. 
Since computing these means is $\mathsf{NP}$-hard in general, we take inspiration from group synchronization to develop tractable spectral and semidefinite relaxations. 
Pairwise alignment certifies global optimality and we develop a semidefinite certificate that can extend beyond this condition.
For Gaussian data modulo sign, we prove that our certificate succeeds in a high-noise regime where alignment fails. 
Numerical experiments evaluate these methods in various settings, including summarizing ensembles of districting plans and clustering time series.
\end{abstract}

\section{Introduction}
\label{sec:intro}

Given a collection of data points $x_1,\ldots,x_n$ in some space~$\mathcal{X}$, what point $x\in\mathcal{X}$ does a good job of representing the data points?
In settings where the space is Euclidean, one is inclined to use the mean $\frac{1}{n}\sum_{i=1}^nx_i$, but there are some settings in which the mean fails to capture important features of the data.
For example, if each $x_i$ is a black-and-white image of someone's face (vectorized so that each coordinate represents a pixel value), then the mean will exhibit some undesirable ghostly features.
The issue here is that natural images form a low-dimensional and nonconvex subset of Euclidean space, and thanks to the lack of convexity, sample means fail to look natural.
Is there a suitable alternative to the mean in settings where the space is nonconvex?

Recall that, given points $x_1,\ldots,x_n$ in a Euclidean space $V$, the Euclidean mean is the point $\mu\in V$ that minimizes the mean squared error:
\[
\frac{1}{n}\sum_{i=1}^n\|x_i-\mu\|^2.
\]
We can take inspiration from this to treat more general settings in which our space~$\mathcal{X}$ has a notion of distance $d_{\mathcal{X}}$.
Define the \textbf{Fr\'{e}chet functional} 
$F\colon\mathcal{X}\to\mathbb{R}$ by
\[
    F(m)
=\frac{1}{n}\sum_{i=1}^n d_{\mathcal{X}}(m,x_i)^2,
\]
and say $\mu\in\mathcal{X}$ is a \textbf{Fr\'{e}chet mean} of $x_1,\ldots,x_n$ if $\mu$ globally minimizes $F$.
Unfortunately, thanks to the nonlinear geometry of $\mathcal{X}$, there is typically not enough convexity in the Fr\'{e}chet functional for global minimizers to be easily obtained. 
As such, one is forced to make do with local minimizers, which are known as \textbf{Karcher means}.

The notion of a Fr\'echet mean originates in the work of Fr\'echet~\cite{frechet1948elements} and was later developed separately in the Riemannian setting by Karcher and Kendall~\cite{karcher1977riemannian, kendallProbabilityConvexityHarmonic1990}. 
These ideas were later adapted to statistical shape analysis, where means on nonlinear and nonconvex shape spaces are a central object of study~\cite{kendall1989survey,le2000frechet}.
In modern data science, Fr\'echet means arise as Wasserstein barycenters in optimal transport~\cite{zemel2019frechet,needhamGeometricAveragesPartitioned2021}, in signal processing~\cite{kurtek2011signal}, and in topological data analysis~\cite{turner2014frechet}. 
The Fr\'echet mean is useful in practice because it provides a representative point for non-Euclidean data without requiring vector space structure.

In this paper, we are interested in Fr\'{e}chet means in a certain ubiquitous setting in data science.
Suppose our data points are represented by vectors, but with the implicit understanding that there was a non-canonical choice made in the vector representation.
For example, a simple unlabeled $n$-vertex graph with edge weights might be represented by a weighted adjacency matrix, but only after deciding how to index the vertices.
Alternatively, given a biological specimen, a scientist might record the $(x,y,z)$-coordinates of $n$ specified landmarks, but only after deciding on a reference frame.
In each of these cases, the choice of representation can be expressed as a group of ambiguities:
in the first case, the group $S_n$ acts on the adjacency matrix by conjugation, while in the second case, the group $SO(3)$ acts on the left of the $3\times n$ matrix whose columns give the origin-centered landmark coordinates.

In general, the data we are interested in lies in a finite-dimensional Hilbert space $\left (V, \langle \cdot , \cdot \rangle \right)$ with norm $\| \cdot \|$ associated with its inner product. 
In addition, there is a known orthogonal group action $G \curvearrowright V$ that captures the underlying ambiguity in the vector representation of our data. 
As such, we think of our data as residing in the orbit space $\mathcal{X}=V/G$ consisting of $G$-orbits 
\[[x] \coloneqq G \cdot x \text{ for } x \in V.\]
By the orthogonality of the group action, the orbit space can be equipped with the natural quotient metric 
\begin{equation}
    \label{eq:dQ}
    d([x], [y]) \coloneqq \inf_{g \in G} \|x-gy\|.
\end{equation}
Unless stated otherwise, we take $G$ to be finite, in which case the above infimum will be achieved by at least one group element.

\begin{example}
\label{ex:pm}
We will consider \textit{sign invariance} as a running example throughout this work. 
Here, we take $V = \mathbb{R}^d$ and $G = \{\pm I_d\}$, which we denote by $G=\pm$.
The metric~\eqref{eq:dQ} is then given by $d([x], [y]) = \min \left\{ \|x-y\|, \|x+y\|\right\}$, and the quotient space $\mathbb{R}^d/\pm$ can be identified with a cone over $\mathbb{R} \mathbb{P}^{d-1}$. Figure~\ref{fig:pm_cone} shows the quotient identification and resulting geometry in the special case where $d=2$. 
\end{example}
\begin{figure}
    \centering
    \includegraphics[width=0.4\linewidth,trim={0 0 500 0},clip]{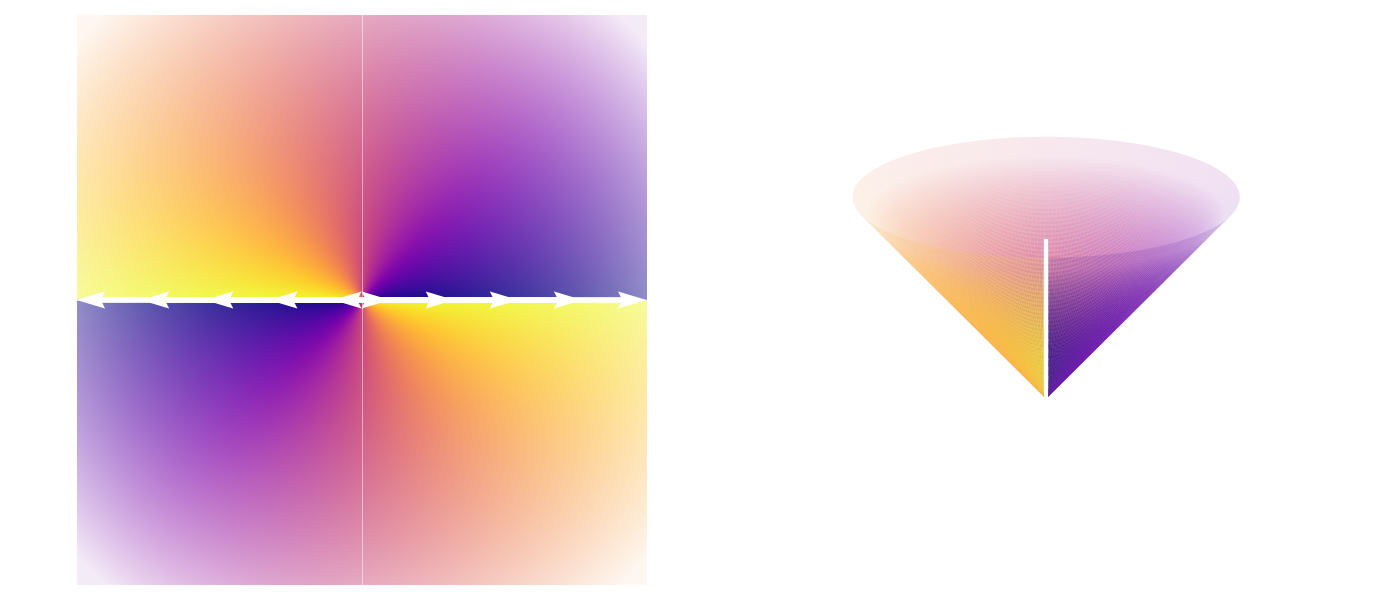}
    \qquad\includegraphics[width=0.4\linewidth,trim={600 100 100 100},clip]{Z2_quotient_space.png}
    \caption{Illustration of $\mathbb{R}^2/\pm$. Left: The plane with the identification $x \sim -x$ shown in color. Right: The quotient space can be visualized as a cone after identifying each direction with its negation, e.g., by discarding the lower halfplane and then gluing the positive horizontal axis to the negative horizontal axis.}
    \label{fig:pm_cone}
\end{figure}

As illustrated in Figure~\ref{fig:pm_cone}, orbit spaces exhibit a level of nonconvexity that insists upon a theory of Fr\'{e}chet means.
In this setting, it will be convenient to consider the Fr\'{e}chet functional as the $G$-invariant functional
\begin{equation}
    \label{eqn:frechet}
    F(m) =\frac{1}{n}\sum_{i=1}^n \min_{g \in G} \| m - gx_i \|^2
\end{equation}
over $V$, rather than a functional over the quotient $V/G$.

In the next section, we identify basic properties of Fr\'echet and Karcher means in orbit spaces, including generic uniqueness of the Fr\'echet mean and $\mathsf{NP}$-hardness of its computation. 
Section~\ref{sec:alignment} then introduces the notion of \textit{alignment} and shows that the Euclidean mean of aligned orbit representatives represents a Fr\'echet mean. 
Next, Section~\ref{sec:relaxations} develops spectral and semidefinite relaxations inspired by group synchronization and derives computable certificates of global optimality. 
In the sign-invariant setting under a Gaussian model, Section~\ref{sec.case study} establishes that these certificates succeed well beyond the regime of pairwise alignment. 
Section~\ref{sec:numerics} numerically evaluates the methods in a variety of settings, and then we conclude with a discussion in Section~\ref{sec:discussion}.
\section{The state of play}
\label{sec:state}

In this section, we report a few basic results about Fr\'echet and Karcher means in orbit spaces: Karcher means are Euclidean means, Fr\'echet means are generically unique, Fr\'echet means are hard to find in the worst case, and traditional approaches to find such means are insufficient. %

\subsection{Karcher means are Euclidean means}
\label{sec:karcher_are_euc}

We start with a structure theorem for Karcher means in orbit spaces.

\begin{theorem}
    \label{thm:euc_mean}
    Let $[y]$ be a Karcher mean of $\{[x_i]\}_{i=1}^n$.
    Then $y$ is the Euclidean mean of orbit representatives $\{g_i x_i \}_{i=1}^n$ for some $g_1,\ldots,g_n\in G$.
\end{theorem}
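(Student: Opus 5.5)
Throughout, a Karcher mean is a local minimizer of the $G$-invariant functional $F$ in~\eqref{eqn:frechet}, viewed on $V$; by invariance, any representative $y$ of $[y]$ is then a local minimizer of $F$ on $V$. The plan is to write $F$ locally near $y$ as a minimum of finitely many smooth convex quadratics and use a first-order argument. For each $i$, set
\[
A_i(y) \coloneqq \argmin_{g\in G}\|y-gx_i\|,
\]
which is nonempty because $G$ is finite.

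\textbf{Step 1: pick a selection and compare with a quadratic.} For each $i$ choose any $g_i\in A_i(y)$, and define
\[
Q(m) \coloneqq \frac1n\sum_{i=1}^n\|m-g_ix_i\|^2 .
\]
Since the minimum over $g$ is at most the value at $g_i$, we have $F\le Q$ on all of $V$. Moreover $F(y)=Q(y)$ by the choice of the $g_i$.

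\textbf{Step 2: show $y$ is a local minimizer of $Q$.} For $m$ in a neighborhood $U$ of $y$ on which $y$ minimizes $F$, we get
\[
Q(y)=F(y)\le F(m)\le Q(m).
\]
So $y$ is a local minimizer of the smooth strictly convex function $Q$. Hence $\nabla Q(y)=\frac2n\sum_i (y-g_ix_i)=0$, which gives $y=\frac1n\sum_i g_ix_i$. This is exactly the claim.

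Note that no continuity of the selection $m\mapsto A_i(m)$ is actually needed, because the inequality $F\le Q$ holds globally. One might at first try to argue that the $g_i$ are locally constant near $y$, which fails on the boundaries of Voronoi-type regions. The domination trick sidesteps this, and it even shows that \emph{every} choice $g_i\in A_i(y)$ works.

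There is one subtlety to check. If Karcher means are instead defined as local minimizers of the Fr\'echet functional on the quotient $V/G$, we must transfer locality. Since $G$ is finite and acts by isometries, the quotient map is continuous: $d([m],[y])\le\|m-y\|$. So a quotient neighborhood of $[y]$ pulls back to a neighborhood of $y$ in $V$. Also, $F(m)$ equals the quotient Fr\'echet functional evaluated at $[m]$ by~\eqref{eq:dQ}. Together these show $y$ is a local minimizer of $F$ on $V$, so Steps 1–2 apply.

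I expect essentially no obstacle. The only point requiring care is this identification of the local-minimality notion, and it follows directly from the definitions.
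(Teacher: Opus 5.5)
Your proposal is correct and takes essentially the paper's route. The paper writes $F$ as a minimum of the finitely many convex quadratics $m\mapsto\frac1n\sum_i\|m-g_ix_i\|^2$. It then proves a short lemma: a local minimizer of a minimum of convex functions is a global minimizer of every active piece. That lemma is your sandwich $Q(y)=F(y)\le F(m)\le Q(m)$ finished by convexity, where you instead use the first-order condition; your remark transferring local minimality from $V/G$ to $V$ is a correct detail the paper leaves implicit.
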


To prove this, we will use what is presumably a standard fact from convex analysis:

\begin{lemma}
\label{lem.local min of min of cvx}
Given convex functions $f_1,\ldots,f_n\colon V\to\mathbb{R}$, suppose $x_0\in V$ is a local minimizer of $f= \min_i f_i$.
Then $x_0$ is also a global minimizer of $f_{i^\star}$ for each $i^\star\in\arg\min_if_i(x_0)$.
\end{lemma}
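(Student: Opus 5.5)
Fix $i^\star\in\arg\min_i f_i(x_0)$, so $f_{i^\star}(x_0)=f(x_0)$. Since $x_0$ is a local minimizer of $f$, there is a radius $r>0$ with $f(x)\ge f(x_0)$ for all $x$ in the ball $B(x_0,r)$. Because $f_{i^\star}\ge f$ pointwise, on this ball we get
\[
f_{i^\star}(x)\ \ge\ f(x)\ \ge\ f(x_0)\ =\ f_{i^\star}(x_0).
\]
Hence $x_0$ is a local minimizer of the single convex function $f_{i^\star}$.

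The remaining step is the standard fact that a local minimizer of a convex function is a global minimizer. We argue by contradiction. Suppose some $z\in V$ satisfies $f_{i^\star}(z)<f_{i^\star}(x_0)$. For $t\in(0,1]$, set $x_t=(1-t)x_0+tz$. Convexity gives
\[
f_{i^\star}(x_t)\ \le\ (1-t)f_{i^\star}(x_0)+tf_{i^\star}(z)\ <\ f_{i^\star}(x_0).
\]
Taking $t$ small enough that $x_t\in B(x_0,r)$ contradicts the local minimality just established. Therefore $x_0$ globally minimizes $f_{i^\star}$.

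The only point needing care is the first step. We must use the inequality $f_{i^\star}\ge f$ together with the equality $f_{i^\star}(x_0)=f(x_0)$, which holds precisely because $i^\star$ attains the minimum at $x_0$. Local minimality of $f$ alone would not transfer to an arbitrary $f_i$. The argument uses neither continuity nor finite-dimensionality beyond the existence of the ball around $x_0$.
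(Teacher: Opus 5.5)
Your proof is correct and is essentially the paper's argument. The paper runs the same chain $f(x_t)\le f_{i^\star}(x_t)<f_{i^\star}(x_0)=f(x_0)$ along the segment as a single contrapositive, whereas you split it into two steps: first transferring local minimality from $f$ to $f_{i^\star}$, then applying the local-implies-global fact for convex functions.
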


\begin{proof}
We prove the contrapositive.
Suppose $x_0\in V$ is not a global minimizer of $f_{i^\star}$ for some $i^\star\in\arg\min_if_i(x_0)$.
Select any $x_1$ satisfying $f_{i^\star}(x_1) < f_{i^\star}(x_0)$ and for each $t\in(0,1)$, put $x_t\coloneqq(1-t)x_0+tx_1$.
Then for every $t\in(0,1)$, we have
\[
\min_i f_i(x_t)
\leq f_{i^\star}(x_t)
\leq(1-t)f_{i^\star}(x_0)+tf_{i^\star}(x_1)
<f_{i^\star}(x_0)
=f(x_0).
\]
Taking $t$ arbitrarily small, it follows that $x_0$ is not a local minimizer of $f$.
\end{proof}

\begin{proof}[Proof of Theorem~\ref{thm:euc_mean}]
The Fr\'echet functional \eqref{eqn:frechet} can be written as
\[
F(m)
=\min_{g_1,\ldots,g_n\in G}\frac{1}{n}\sum_{i=1}^n\|m-g_ix_i\|^2.
\]
In particular, $F$ is the minimum of finitely many convex functions.
Lemma~\ref{lem.local min of min of cvx} then gives that any local minimizer of $F$ must be a global minimizer of some
\[
m\longmapsto\frac{1}{n}\sum_{i=1}^n\|m-g_ix_i\|^2,
\]
i.e., the Euclidean mean of $\{g_ix_i\}_{i=1}^n$.
\end{proof}

Theorem~\ref{thm:euc_mean} suggests an alternating algorithm to find Fr\'echet means: alternate between (1) finding closest orbit representatives $\{g_ix_i\}_{i=1}^n$ to $m$ and (2) computing the Euclidean mean of $\{g_ix_i\}_{i=1}^n$.
This amounts to running the $k$-means iteration with $k=|G|$ on $\{gx_i\}_{i=1,\,g\in G}^n$ after initializing with a $G$-orbit of proto-centroids.
This approach, sometimes referred to as the ``max-max'' algorithm, is investigated in~\cite{devilliersInconsistencyTemplateEstimation2017}.
Unfortunately, such alternating algorithms are known to get stuck in local minimizers, and accordingly, we do not analyze this iteration in this paper.

\subsection{The Fr\'{e}chet mean is generically unique}

Next, we show that the Fr\'echet mean is unique for generic data:

\begin{theorem}
\label{thm:unique}
Fix any finite $G \leq O(V)$.
For generic $\{x_i\}_{i=1}^n \subset V$,  the Fr\'echet mean of $\{[x_i]\}_{i=1}^n \subset V/G$ is unique.
\end{theorem}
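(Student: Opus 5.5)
By Theorem~\ref{thm:euc_mean}, every Fr\'echet mean (being in particular a Karcher mean) has a representative of the form $\mu_{\mathbf g}\coloneqq\frac1n\sum_{i=1}^n g_ix_i$ for some tuple $\mathbf g=(g_1,\ldots,g_n)\in G^n$. Writing $F_{\mathbf g}(m)=\frac1n\sum_i\|m-g_ix_i\|^2$, we have $F=\min_{\mathbf g}F_{\mathbf g}$. Since $\mu_{\mathbf g}$ minimizes $F_{\mathbf g}$, we get $F(\mu_{\mathbf g})\le F_{\mathbf g}(\mu_{\mathbf g})$. Conversely, at a Fr\'echet mean $y=\mu_{\mathbf g}$ the proof of Lemma~\ref{lem.local min of min of cvx} shows $\mathbf g$ can be taken to attain the minimum at $y$. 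Hence the minimum value of $F$ equals $\min_{\mathbf g\in G^n}c_{\mathbf g}$, where
\[
c_{\mathbf g}\coloneqq F_{\mathbf g}(\mu_{\mathbf g})=\frac1n\sum_i\|x_i\|^2-\Big\|\frac1n\sum_i g_ix_i\Big\|^2 ,
\]
and the Fr\'echet means are exactly the orbits $[\mu_{\mathbf g}]$ with $\mathbf g$ achieving this minimum. The minimizing tuples thus coincide with the maximizers of $q_{\mathbf g}(x)\coloneqq\|\sum_i g_ix_i\|^2$ over $\mathbf g\in G^n$.

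Next I would account for the obvious symmetry. Replacing $\mathbf g$ by $h\mathbf g=(hg_1,\ldots,hg_n)$ for $h\in G$ leaves $q_{\mathbf g}$ unchanged, since $h$ is orthogonal, and sends $\mu_{\mathbf g}$ to $h\mu_{\mathbf g}$, which lies in the same orbit. It therefore suffices to show that, for generic data, any two maximizing tuples $\mathbf g,\mathbf g'$ satisfy $\mathbf g'=h\mathbf g$ for some $h\in G$, or at least $[\mu_{\mathbf g}]=[\mu_{\mathbf g'}]$. Concretely, I would prove that for each pair $\mathbf g,\mathbf g'$ with $\mathbf g'\notin G\mathbf g$, the polynomial
\[
p_{\mathbf g,\mathbf g'}(x)\coloneqq q_{\mathbf g}(x)-q_{\mathbf g'}(x)=\sum_{i,j}\big\langle x_i,(g_i^{-1}g_j-g_i'^{-1}g_j')x_j\big\rangle
\]
is not identically zero on $V^n$. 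Its zero set is then a proper real algebraic subset, hence closed and of measure zero. The union over the finitely many such pairs is still a proper algebraic set, and for data outside it the maximizer is unique up to left multiplication by $G$, which gives a unique Fr\'echet mean.

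The main obstacle is showing that $p_{\mathbf g,\mathbf g'}\not\equiv0$, and one must be careful here. The expression is a quadratic form in $x=(x_1,\ldots,x_n)$, so it vanishes identically exactly when its symmetric block matrix vanishes. The $(i,j)$ block is $\tfrac12\big[(g_i^{-1}g_j-g_i'^{-1}g_j')+(g_j^{-1}g_i-g_j'^{-1}g_i')\big]$, and the diagonal blocks are zero. Writing $a=g_i^{-1}g_j$ and $b=g_i'^{-1}g_j'$, vanishing means $a+a^{-1}=b+b^{-1}$ as operators on $V$. This does not force $a=b$: for example $b=a^{-1}$, as with rotations by opposite angles. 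So, contrary to my first hope, genuinely distinct maximizing tuples can occur identically; a reflection of the whole configuration is one source of this. The fallback is to compare the orbits of the means directly rather than the tuples. One option is to show $\|\mu_{\mathbf g}-h\mu_{\mathbf g'}\|^2$ is not identically zero for every $h\in G$ whenever $[\mu_{\mathbf g}]\ne[\mu_{\mathbf g'}]$ generically. The cleaner option is to replace the bad set by $Z=\{x: q_{\mathbf g}=q_{\mathbf g'}\text{ and }[\mu_{\mathbf g}]\neq[\mu_{\mathbf g'}]\}$ and prove $Z$ is contained in a proper algebraic set. For each pair $\mathbf g,\mathbf g'$, either the linear maps $x\mapsto\mu_{\mathbf g}$ and $x\mapsto h\mu_{\mathbf g'}$ coincide for some $h$, in which case there is no problem, or each difference $\mu_{\mathbf g}-h\mu_{\mathbf g'}$ is a nonzero linear map. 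In the latter case one needs $p_{\mathbf g,\mathbf g'}\not\equiv0$, and I expect to handle the degenerate case $a+a^{-1}=b+b^{-1}$ for all $i,j$ by a direct argument on the group elements. This is the step I would work out most carefully.
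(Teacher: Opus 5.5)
\textbf{There is a genuine gap.} Your reduction matches the paper's. Fréchet means are the orbits $[\mu_{\mathbf g}]$ of tuples maximizing $q_{\mathbf g}(x)=\|\sum_i g_ix_i\|^2$; you quotient by $\mathbf g\mapsto h\mathbf g$; and the theorem follows once $p_{\mathbf g,\mathbf g'}\not\equiv0$ for every pair with $\mathbf g'\notin G\mathbf g$. You never establish this nonvanishing, which is the only nontrivial step. Worse, you assert it can fail, and that assertion rests on a computational slip: you dropped a transpose when symmetrizing. With $a=g_i^{-1}g_j$ and $b=g_i'^{-1}g_j'$, the $(i,j)$ and $(j,i)$ terms of $p_{\mathbf g,\mathbf g'}$ are $\langle x_i,(a-b)x_j\rangle+\langle x_j,(a^{-1}-b^{-1})x_i\rangle$. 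Since $a$ and $b$ are orthogonal, $\langle x_j,a^{-1}x_i\rangle=\langle ax_j,x_i\rangle=\langle x_i,ax_j\rangle$. So the pair sums to $2\langle x_i,(a-b)x_j\rangle$, not $\langle x_i,(a+a^{-1}-b-b^{-1})x_j\rangle$.

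Equivalently, the block matrix $M$ with $M_{[ij]}=g_i^{-1}g_j-g_i'^{-1}g_j'$ already satisfies $M_{[ji]}^\T=M_{[ij]}$. Hence $p_{\mathbf g,\mathbf g'}\equiv0$ if and only if $g_i^{-1}g_j=g_i'^{-1}g_j'$ for all $i,j$. Taking $i=1$ gives $g_j'=(g_1'g_1^{-1})g_j$ for every $j$, i.e.\ $\mathbf g'\in G\mathbf g$. Your rotation example is therefore not a counterexample. With $n=2$, $\mathbf g=(I,R_\theta)$ and $\mathbf g'=(I,R_{-\theta})$, one gets $p_{\mathbf g,\mathbf g'}=2\langle x_1,(R_\theta-R_{-\theta})x_2\rangle=4\sin\theta\,\langle x_1,Jx_2\rangle$, where $J$ is the quarter-turn. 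This is nonzero unless $R_\theta=R_{-\theta}$.

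The paper closes this step the same way, in a more hands-on form. It normalizes $g_1=g_1'=I$, picks $j$ with $g_j\neq g_j'$, and sets $x_k=0$ for $k\notin\{1,j\}$. The polynomial then reduces to $2\langle x_1,(g_j-g_j')x_j\rangle$, which is visibly not identically zero. Your fallback options (comparing orbits of the means directly, or working with the set $Z$) do not get around the issue. As you note yourself, they still require $p_{\mathbf g,\mathbf g'}\not\equiv0$ in the nontrivial case, so the argument as written is incomplete exactly where it matters. With the corrected computation, your outline becomes the paper's proof.

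One small point in your favor: your formula $c_{\mathbf g}=\frac1n\sum_i\|x_i\|^2-\|\mu_{\mathbf g}\|^2$ makes the paper's appeal to Lemma~\ref{lemma:equal_norm} unnecessary. Equality $c_{\mathbf g}=c_{\mathbf g'}$ is already equality $q_{\mathbf g}=q_{\mathbf g'}$.
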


The generic condition is meant in the semi-algebraic sense and, as such, the datasets $\{x_i\}_{i=1}^n$ with multiple Fr\'echet means form a Lebesgue null set in $V^n$. 
Still, as the following example illustrates, uniqueness need not hold for arbitrary data.

\begin{example}
Let $u,v\in\mathbb{R}^d$ be nonzero orthogonal vectors, and consider their sign-invariant orbits $[u], [v] \in \mathbb{R}^d/\pm$. 
This two-point dataset has two different Fr\'echet means in orbit space: $[\frac{u+v}{2}]$ and $[\frac{u - v}{2}]$. 
Meanwhile, any perturbation that makes the vectors non-orthogonal results in a dataset with a unique Fr\'echet mean.
\end{example}

To prove Theorem~\ref{thm:unique}, we will use the following lemma, originally observed in~\cite{devilliersInconsistencyTemplateEstimation2017}. We provide a proof here for completeness.

\begin{lemma}
    \label{lemma:equal_norm}
    Fix any finite $G \leq O(V)$ and $\{x_i\}_{i=1}^n \subset V$.
    Then any two Fr\'echet means of $\{[x_i]\}_{i=1}^n\subset V/G$ have the same norm.
    \label{lemma:means}
\end{lemma}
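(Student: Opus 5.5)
The idea is to expand the Fr\'echet functional and use the fact that, at a Fr\'echet mean, the value of $F$ is determined by the norm of the mean. We know two things about a Fr\'echet mean $y$:
\begin{itemize}
\item By Theorem~\ref{thm:euc_mean}, any Fr\'echet mean (which is in particular a Karcher mean) satisfies $y=\frac{1}{n}\sum_{i=1}^n g_ix_i$ for some $g_1,\ldots,g_n\in G$.
\item We may choose these $g_i$ to be optimal alignments of each $x_i$ to $y$. Indeed, the proof of Theorem~\ref{thm:euc_mean} via Lemma~\ref{lem.local min of min of cvx} shows that $y$ minimizes $m\mapsto\frac1n\sum_i\|m-g_ix_i\|^2$ for any tuple $(g_i)$ attaining the minimum in $F(y)$. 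Such a tuple is exactly one with $\|y-g_ix_i\|=\min_{g}\|y-gx_i\|$ for every $i$.
\end{itemize}

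Fix such an optimal tuple. Then
\[
F(y)=\frac1n\sum_{i=1}^n\|y-g_ix_i\|^2=\|y\|^2-2\Big\langle y,\frac1n\sum_i g_ix_i\Big\rangle+\frac1n\sum_i\|x_i\|^2 .
\]
Here we used $\|g_ix_i\|=\|x_i\|$, which holds by orthogonality of the action. Substituting $y=\frac1n\sum_i g_ix_i$ gives
\[
F(y)=\frac1n\sum_{i=1}^n\|x_i\|^2-\|y\|^2 .
\]

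Now let $y$ and $y'$ be two Fr\'echet means. Both attain the global minimum value of $F$, so $F(y)=F(y')$. The identity above then gives
\[
\frac1n\sum_i\|x_i\|^2-\|y\|^2=\frac1n\sum_i\|x_i\|^2-\|y'\|^2,
\]
and hence $\|y\|=\|y'\|$. Finally, the norm is constant on orbits, again by orthogonality, so the statement is well defined on $V/G$.

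The only delicate point is the second bullet. Theorem~\ref{thm:euc_mean} as stated gives some tuple $(g_i)$ with $y$ equal to the Euclidean mean. The computation, however, needs that same tuple to realize the minimum in $F(y)$. This is handled by invoking Lemma~\ref{lem.local min of min of cvx} directly with $i^\star$ chosen in the argmin at $y$.

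Alternatively, the issue can be bypassed. For any optimal tuple at $y$, we have $F(y)=\min_m\frac1n\sum_i\|m-g_ix_i\|^2$, because $F\le$ each convex piece and $y$ is a global minimizer of $F$. This minimum equals $\frac1n\sum\|x_i\|^2-\|\bar x_g\|^2$, where $\bar x_g=\frac1n\sum_i g_ix_i$. It is attained uniquely at $\bar x_g$, and $y$ attains it, so $y=\bar x_g$.
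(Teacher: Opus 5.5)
Your proof is correct, but it takes a different route from the paper. The paper does not use Theorem~\ref{thm:euc_mean} at all. It writes $m=\|m\|\widehat m$ and notes that for a fixed direction $\widehat m$ the functional is a quadratic in the radius. Minimizing that quadratic in closed form gives
\[
\min_{m\in V}F(m)=\frac1n\sum_{i=1}^n\|x_i\|^2-\max_{\widehat m\in S(V)}\bigg(\frac1n\sum_{i=1}^n\max_{g\in G}\langle\widehat m,gx_i\rangle\bigg)_+^2,
\]
so the squared norm of any minimizer must equal this maximum. You instead combine the structure theorem, with representatives chosen optimally at $y$, with the identity $\frac1n\sum_i\|\bar x-g_ix_i\|^2=\frac1n\sum_i\|x_i\|^2-\|\bar x\|^2$ for $\bar x=\frac1n\sum_ig_ix_i$. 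Both arguments actually prove the same formula, $\|y\|^2=\frac1n\sum_i\|x_i\|^2-\min F$, for every Fr\'echet mean $y$.

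You handle the one delicate point properly, and your second resolution is the cleaner one. For a tuple $\mathbf g$ optimal at $y$, the convex piece $f_{\mathbf g}(m)\coloneqq\frac1n\sum_i\|m-g_ix_i\|^2$ satisfies $f_{\mathbf g}(y)=F(y)\le F\le f_{\mathbf g}$. Hence $y$ is the unique minimizer of $f_{\mathbf g}$, and this uses only global minimality, not Lemma~\ref{lem.local min of min of cvx}.

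The paper's argument is self-contained and gives a variational description of the common norm as a maximum over the unit sphere. Yours leans on results already established, but it isolates the identity $F(\mathbf g;\{x_i\})=\frac1n\sum_i\|x_i\|^2-\|\frac1n\sum_jg_jx_j\|^2$, valid for every tuple. This is the same computation that appears in the proof of Proposition~\ref{prop.np hard}. It also makes the norm equality invoked in the proof of Theorem~\ref{thm:unique} immediate from $F(\mathbf g;\{x_i\})=F(\mathbf h;\{x_i\})$, without needing either tuple to be optimal.
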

\begin{proof}
    Given $m\in V$, write $m = \|m\| \widehat{m}$, where $\|\widehat{m}\| = 1$. Then the Fr\'echet functional~\eqref{eqn:frechet} can be written as
    \begin{equation}
        \label{eqn:polar_frechet}
        F(m) = \|m\|^2 - \frac{2}{n} \|m\| \sum_{i=1}^n \max_{g \in G} \langle  \widehat{m}, gx_i \rangle + \frac{1}{n} \sum_{i=1}^n \|x_i\|^2.
    \end{equation}
    We decompose the minimization over $m$ as
    \[
        \min_{m \in V} F(m) = \min_{\widehat{m} \in S(V)} \min_{\|m\|} F(\|m\|\widehat{m}).
    \]
    For fixed $\widehat{m}$, $F$ is quadratic in $\|m\|$ and so is minimized at
    \[
        \|m\| = \bigg( \frac{1}{n} \sum_{i=1}^n \max_{g \in G} \langle \widehat{m}, gx_i \rangle \bigg)_+,
    \]
    where $\left(x\right)_+ \coloneqq \max\{x,0\}$.
    Substituting back into \eqref{eqn:polar_frechet}, the minimization reduces to
    \[
         \min_{m \in V} F(m) = -\max_{\widehat{m} \in S(V)} \bigg( \frac{1}{n} \sum_{i=1}^n \max_{g \in G} \langle \widehat{m}, gx_i \rangle \bigg)_+^2 + \frac{1}{n} \sum_{i=1}^n \|x_i\|^2. \
    \]
    Therefore the minimum of $F$ determines $( \frac{1}{n} \sum_{i=1}^n \max_{g \in G} \langle \widehat{m}, gx_i \rangle )_+^2$, which in turn determines the norm $\| m\|$.
\end{proof}

\begin{proof}[Proof of Theorem~\ref{thm:unique}]
Denote the Fr\'echet functional for $\{x_i\}_{i=1}^n$ at $m$ by
\[
F(m;\{x_i\})
\coloneqq\frac{1}{n}\sum_{i=1}^n\min_{g\in G}\|m-gx_i\|^2.
\]
Letting $\mathbf{g} = (g_1, \dots, g_n) \in G^n$ denote a choice of group elements, 
we reduce the problem to minimizing over the finite set $G^n$:
\begin{align*}
        \min_{m \in V} F(m; \{x_i\}) 
        &=  \min_{m \in V} \min_{\mathbf{g} \in {G}^n} \frac{1}{n}\sum_{i=1}^n \| m - g_i x_i\|^2 \\
        &= \min_{\mathbf{g} \in {G}^n} \min_{m \in V} \frac{1}{n}\sum_{i=1}^n \| m - g_i x_i\|^2 \\
        &= \min_{\mathbf{g} \in {G}^n}  \frac{1}{n}\sum_{i=1}^n \bigg\|  \bigg( \frac{1}{n}\sum_{j=1}^n g_jx _j \bigg) - g_i x_i \bigg \|^2 \\
        &=: \min_{\mathbf{g} \in G^n} F(\mathbf{g}; \{x_i\}_{i=1}^n).
\end{align*}
The functional $\mathbf{g}\mapsto F(\mathbf{g}; \{x_i\}_{i=1}^n)$ is invariant under the diagonal action $h\cdot\mathbf g=(hg_1,\dots,hg_n)$, so without loss of generality we fix the representative by taking $g_1=I$ 
and restrict to $\{I\}\times G^{n-1}$.
Next consider the collection of data for which two distinct sets of group elements attain the Fr\'echet mean:
\begin{equation}
    \label{eqn:nullset}
    N= \bigcup_{\mathbf g\neq\mathbf h\in\{I\}\times G^{n-1}} \Big\{\{x_i\}_{i=1}^n: \ F(\mathbf g;\{x_i\})=F(\mathbf h;\{x_i\}) = \min_{\mathbf a \in G^n}F(\mathbf a;\{x_i\})\Big\}.
\end{equation}
For any distinct $\mathbf g, \mathbf h$ attaining the Fr\'echet mean,
\[
\Big\|\frac{1}{n}\sum_{j=1}^n g_j x_j \Big\|^2 - \Big\| \frac{1}{n}\sum_{j=1}^n h_j x_j \Big\|^2 = 0
\]
by Lemma~\ref{lemma:equal_norm}.
Expanding and subtracting the squared norms in $F$ then shows that
\[ 
\Big\{\{x_i\}_{i=1}^n: \ F(\mathbf g;\{x_i\})=F(\mathbf h;\{x_i\})\Big\}\subseteq \ker K_{\mathbf g,\mathbf h} ,
\]
where
\[
    K_{\mathbf g,\mathbf h}(\{x_i\}_{i=1}^n)\coloneqq\sum_{i,j}\langle x_j,(g_j^{-1}g_i-h_j^{-1}h_i)x_i\rangle.
\]
For a fixed $\mathbf g , \mathbf h$, $ K_{\mathbf g,\mathbf h}$ defines a  polynomial functional on \(V^n\). 
We now show that $ K_{\mathbf g,\mathbf h}$ is nontrivial. 
Since $\mathbf g\neq\mathbf h$ there is some index $j\ge2$ with $g_j\neq h_j$.
Then the \((1,j)\) and \((j,1)\) terms yield
\[
2\langle x_1,(g_j-h_j)x_j\rangle,
\]
which is a nonzero quadratic form, and so $x_1, x_j$ can be chosen to make it nonzero; letting $x_k=\mathbf 0$ for $k \notin \{1, j\}$ yields a nonzero point in the image of $K_{\mathbf g,\mathbf h}$. 
Hence $K_{\mathbf g,\mathbf h}$ is not identically zero, and as such, $\ker K_{\mathbf g,\mathbf h}$ is a proper algebraic subset of $V^n$. 
By \eqref{eqn:nullset}, $N$ is contained in a finite union of proper algebraic subsets. 
Thus, the Fr\'echet mean is generically unique.
\end{proof}

\subsection{Computing the Fr\'echet mean is $\mathsf{NP}$-hard}

Although the Fr\'echet mean is generically unique, it is in general $\mathsf{NP}$-hard to find, even in the special case of $\mathbb{R}^d/\pm$.

\begin{proposition}
\label{prop.np hard}
    Computing the Fr\'echet mean in $\mathbb{R}^d/\pm$ is $\mathsf{NP}$-hard when the dimension $d$ is part of the input. 
\end{proposition}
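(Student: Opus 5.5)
By the computation in the proof of Lemma~\ref{lemma:equal_norm}, minimizing the Fr\'echet functional \eqref{eqn:frechet} for $G=\pm$ is equivalent to maximizing
\[
\frac{1}{n}\sum_{i=1}^n |\langle \widehat m, x_i\rangle|
\]
over unit vectors $\widehat m$. Equivalently, following the reduction in the proof of Theorem~\ref{thm:unique}, it is equivalent to maximizing
\[
\Big\|\sum_{i=1}^n s_i x_i\Big\|^2=\sum_{i,j}s_is_j\langle x_i,x_j\rangle
\]
over sign vectors $s\in\{\pm1\}^n$. Moreover, any Fr\'echet mean $[m]$ recovers an optimal sign vector via $s_i=\sign\langle m,x_i\rangle$, and conversely. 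Computing the Fr\'echet mean is therefore at least as hard as solving
\[
\max_{s\in\{\pm1\}^n} s^\top A s
\]
where $A$ is the Gram matrix of the data. The plan is to reduce from $\textsc{Max-Cut}$, which is $\mathsf{NP}$-hard.

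Given a graph on $n$ vertices with Laplacian $L$, I would note that
\[
\tfrac14 s^\top L s=\#\{\text{edges cut by } s\},
\]
so $\textsc{Max-Cut}$ is $\max_s s^\top L s$. Since $L\succeq 0$, it is a Gram matrix. Concretely, take $d$ equal to the number of edges, let $B$ be the $d\times n$ signed incidence matrix, so that $L=B^\top B$, and let the data points $x_i\in\mathbb{R}^d$ be the columns of $B$. The data then have integer entries in $\{0,\pm1\}$ and polynomial size, and
\[
\Big\|\sum_i s_i x_i\Big\|^2=\|Bs\|^2=s^\top L s=4\cdot\mathrm{cut}(s).
\]
Hence a Fr\'echet mean $[m]$ of $\{[x_i]\}$ yields a maximum cut through $s_i=\sign\langle m,x_i\rangle$. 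The dimension $d$ grows with the input here, which is consistent with the hypothesis that $d$ is part of the input.

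The main obstacle is making precise what ``computing'' means, since the mean is a real vector. My approach is to observe that the mean is $\frac1n\sum_i s_i x_i$ for an optimal $s$, which is rational with polynomial bit size. So outputting it exactly, or even approximating it to accuracy finer than half the minimum separation, determines a maximizing cut. One subtlety is the ties $\langle m,x_i\rangle=0$. If $s$ is optimal, then $\langle m,x_i\rangle=0$ means flipping $s_i$ leaves $\|Bs\|^2$ unchanged (the change is $-4s_i\langle \sum_j s_jx_j, x_i\rangle+\ldots$, so first-order terms vanish and only $\|x_i\|^2$ terms shift). I would handle this by checking directly that $|\langle m,x_i\rangle|\ge\|x_i\|^2/n$ at an optimum, using local optimality of $s$ under single flips, or simply assign either sign, since the resulting cut value is computed from $m$ via $n^2\|m\|^2=4\,\mathrm{cut}$. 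In fact this last identity shows that even the optimal Fr\'echet value decides $\textsc{Max-Cut}$, which gives the cleanest formulation of the hardness claim.
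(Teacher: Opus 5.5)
Your proposal is correct and is essentially the paper's proof. Both reduce from MaxCut using the columns of the signed incidence matrix and the identity $\|X\epsilon\|^2=\epsilon^\T L\epsilon=4\,\mathrm{cut}(\epsilon)$; the paper reads off $\operatorname{MaxCut}(H)$ from the optimal value $\min F=\frac1n\sum_i\|x_i\|^2-\frac1{n^2}\max_\epsilon\|X\epsilon\|^2$, which is equivalent to your $n^2\|m\|^2=4\,\mathrm{cut}$. There is one small slip in your tie discussion: if $\langle m,x_i\rangle=0$ at an optimum, flipping $s_i$ would \emph{increase} $\|Bs\|^2$ by $4\|x_i\|^2$ rather than leave it unchanged, so ties occur only for isolated vertices ($x_i=0$), exactly as your bound $|\langle m,x_i\rangle|\ge\|x_i\|^2/n$ shows.
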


(Despite this result, it may be the case that computing the Fr\'echet mean in $\mathbb{R}^d/\pm$ is tractable for any fixed $d$.
We do not approach this problem here.)

\begin{proof}[Proof of Proposition~\ref{prop.np hard}]
We prove the result by reduction from $\operatorname{MaxCut}$, a well-known $\mathsf{NP}$-hard problem~\cite{garey1976simplified}.
Given a graph $H$ with $n$ vertices and $d$ edges, consider an oriented edge-vertex incidence matrix $X\in\mathbb{R}^{d\times n}$; to be explicit, if edge $i$ is incident to vertices $j$ and $k$ with $j<k$, then we may define $X$ by $X_{ij}=1$, $X_{ik}=-1$, and $X_{il}=0$ for all $l\not\in\{j,k\}$.
Then the Laplacian of $H$ is given by $L=X^\T X$. 
Recall that
$$
\operatorname{MaxCut}(H)
=\max_{\epsilon\in\{\pm1\}^n}\frac{1}{4}\epsilon^\T L\epsilon
=\frac{1}{4}\max_{\epsilon\in\{\pm1\}^n}\|X\epsilon\|^2.
$$
Let $x_i$ denote the $i$th column of $X$. 
Then the Fr\'echet functional associated with $x_1,\ldots,x_n$ satisfies
$$
F(m)
=\frac{1}{n}\sum_{i=1}^n\min\{\|m-x_i\|^2,\|m+x_i\|^2\}
=\min_{\epsilon\in\{\pm1\}^n}\frac{1}{n}\sum_{i=1}^n\|m-\epsilon_i x_i\|^2,
$$
and so
$$
\min_{m\in\mathbb{R}^d}F(m)
=\min_{m\in\mathbb{R}^d}\min_{\epsilon\in\{\pm1\}^n}\frac{1}{n}\sum_{i=1}^n\|m-\epsilon_i x_i\|^2
=\min_{\epsilon\in\{\pm1\}^n}\min_{m\in\mathbb{R}^d}\frac{1}{n}\sum_{i=1}^n\|m-\epsilon_i x_i\|^2.
$$
For a fixed $\epsilon$, the minimizer of $\frac{1}{n}\sum_{i=1}^n\|m-\epsilon_i x_i\|^2$ is given by the Euclidean mean $\frac{1}{n}X\epsilon$, in which case
\begin{align*}
\frac{1}{n}\sum_{i=1}^n\bigg\|\frac{1}{n}X\epsilon-\epsilon_i x_i\bigg\|^2
&=\bigg\|\frac{1}{n}X\epsilon\bigg\|^2
-2\bigg\langle \frac{1}{n}X\epsilon,\frac{1}{n}\sum_{i=1}^n\epsilon_i x_i\bigg\rangle
+\frac{1}{n}\sum_{i=1}^n\|\epsilon_i x_i\|^2\\
&=\frac{1}{n}\sum_{i=1}^n\|x_i\|^2-\frac{1}{n^2}\|X\epsilon\|^2.
\end{align*}
Overall,
$$
\min_{m\in\mathbb{R}^d}F(m)
=\min_{\epsilon\in\{\pm1\}^n}\bigg(\frac{1}{n}\sum_{i=1}^n\|x_i\|^2-\frac{1}{n^2}\|X\epsilon\|^2\bigg)
=\frac{1}{n}\sum_{i=1}^n\|x_i\|^2-\frac{1}{n^2}\max_{\epsilon\in\{\pm1\}^n}\|X\epsilon\|^2.
$$
It follows that
$$
\operatorname{MaxCut}(H)
=\frac{1}{4}\max_{\epsilon\in\{\pm1\}^n}\|X\epsilon\|^2
=\frac{n^2}{4}\bigg(\frac{1}{n}\sum_{i=1}^n\|x_i\|^2-\min_{m\in\mathbb{R}^d}F(m)\bigg).
$$
Thus, globally minimizing the Fr\'echet functional for $\mathbb{R}^d/\pm$ yields $\operatorname{MaxCut}(H)$.
\end{proof}

\subsection{The Riemannian approach to Fr\'echet means is limited}

Traditionally, Fr\'echet means are computed using tools from Riemannian geometry ~\cite{pennecIntrinsicStatisticsRiemannian2006, groisser2004newton}.
We can transfer these ideas to the orbit-space setting, but first, we need to wrestle with certain singularities.
For example, Figure~\ref{fig:pm_cone} illustrates how the origin of $\mathbb{R}^2$ becomes a cone-point singularity in the orbit space $\mathbb{R}^2/\pm$.
Here, the origin is special because it has a nontrivial stabilizer.
In general, we say $x\in V$ is an \textbf{irregular point} of the $G$-action if it is fixed by some non-identity element of $G$, and otherwise, it is \textbf{regular}.

Note that regularity is defined as a property of points in $V$ and not $V/G$. 
Regularity is still well defined on orbits; if $x \in V$ is irregular and fixed by some $g \in G$ then for $y = hx \in [x]$ we have that $(h g h^{-1})\, y = y $, and hence $y$ is also irregular.
In our setting, regularity is a generic condition on points in $V$ since the set of irregular points is exactly a finite union of fixed-point subspaces.

Irregular points mark a warping of the geometry of $V$ in the quotient metric. 
If a point $x \in V$ is irregular, then locally the space is not Euclidean and instead shares the geometry of $V/ \Stab(x)$ where $\Stab(x) \coloneqq \{ g \in G : gx = x\}$.
These irregularities give the orbit space $V/G$ a global orbifold structure rather than one of a traditional manifold. 
Despite this, after removing irregular points, $V/G$ is a flat Riemannian manifold whose induced Riemannian distance agrees locally with the quotient metric $d_{V/G}$.
In particular, the space can be locally identified with $V$.
This structure gives the possibility of finding the Fr\'echet mean by gradient descent. 
In fact, we can transfer a classical result from Kendall~\cite{kendallProbabilityConvexityHarmonic1990} to obtain a light guarantee on the nonexistence of spurious local minima of the Fr\'echet functional.

\begin{proposition}
    \label{prop:riemannian}
    Fix any finite nontrivial $G\leq O(V)$, a regular point $y \in V$, and orbits $\{[x_i]\}_{i=1}^n\subset V/G$ such that
    \[
    \max_{i\in\{1,\ldots,n\}}\min_{p\in[x_i]}\|p-y\|
    <r
    \coloneqq\frac{1}{2}\min_{\substack{g \in G \\ g \neq I}} \operatorname{dist}\big(y, (y - g y)^\perp\big),
    \]
    where $\operatorname{dist}(x,S)$ denotes the minimum Euclidean distance between the point~$x$ and the subspace~$S$.
    Then there is exactly one Karcher mean of $\{[x_i]\}_{i=1}^n$ contained in the image of the open ball $B(y,r) \subset V$ under the quotient map.
\end{proposition}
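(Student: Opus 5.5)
Work in $V$ instead of $V/G$, where the problem becomes one of convex analysis. Compute $\operatorname{dist}(y,(y-gy)^\perp)=|\langle y,y-gy\rangle|/\|y-gy\|$. Since $g$ is orthogonal, $\|y-gy\|^2=2\langle y,y-gy\rangle$, so this distance is $\tfrac12\|y-gy\|$. Hence $r=\tfrac14\min_{g\neq I}\|y-gy\|>0$, which is positive because $y$ is regular. Two consequences follow. (a) The ball $B\coloneqq B(y,2r)$ is disjoint from each translate $gB$ with $g\neq I$, because $\|gy-y\|\ge 4r$. So the quotient map is injective on $B(y,2r)$, and in particular on $B(y,r)$. (b) For each $i$, choose $p_i\in[x_i]$ with $\|p_i-y\|<r$. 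Every other representative $gp_i$ with $g\neq I$ satisfies $\|gp_i-y\|\ge \|gy-y\|-\|gp_i-gy\|>4r-r=3r$.

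Next I localize the Fr\'echet functional. For $m\in B(y,2r)$ we have $\|m-p_i\|<3r$ and $\|m-gp_i\|>r$ for $g\neq I$. That is not enough, so shrink the region to $m\in B(y,r)$. There $\|m-p_i\|<2r$, while $\|m-gp_i\|\ge\|gp_i-y\|-\|m-y\|>3r-r=2r$. Therefore, on $B(y,r)$ the minimum in \eqref{eqn:frechet} is attained uniquely at $g=I$, and
\[
F(m)=\frac1n\sum_{i=1}^n\|m-p_i\|^2 \qquad (m\in B(y,r)).
\]
This is a strictly convex quadratic on the open set $B(y,r)$, and its global minimizer is the Euclidean mean $\bar p=\frac1n\sum_i p_i$. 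Since $B(y,r)$ is convex and contains every $p_i$, it also contains $\bar p$.

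Existence: $\bar p\in B(y,r)$ is an open-set local minimizer of $F$, so $[\bar p]$ is a Karcher mean lying in the image of $B(y,r)$. Uniqueness: suppose $[z]$ is a Karcher mean with a representative $z\in B(y,r)$. Because $F$ is $G$-invariant, local minimality of $[z]$ in $V/G$ is the same as local minimality of $z$ for $F$ on $V$. By (a), the representative in $B(y,r)$ is unique, so no ambiguity arises there. On the open set $B(y,r)$, the function $F$ agrees with a strictly convex quadratic, and the only local minimizer of that quadratic is $\bar p$. Hence $z=\bar p$. Alternatively, Theorem~\ref{thm:euc_mean} says $z$ is a Euclidean mean of some representatives. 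The local formula shows those representatives can be taken to be the $p_i$, since the minimizing $g$ is unique.

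The main obstacle is getting the constants to close, i.e., making the strict inequality $\|m-p_i\|<\|m-gp_i\|$ hold uniformly on the whole ball. That requires pinning down the value of $r$ exactly through the identity $\operatorname{dist}(y,(y-gy)^\perp)=\tfrac12\|y-gy\|$. If the factor turns out tighter than the estimate above, I would fall back on using the bisector hyperplane between $p_i$ and $gp_i$ directly. A smaller point needing care is the translation between local minima on $V/G$ and on $V$, which uses the injectivity from (a).
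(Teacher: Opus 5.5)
Your argument is correct, and it takes a more elementary route than the paper at the decisive step. Both proofs rest on the same estimate. Since $\|y-gy\|\ge 4r$, any $p,q\in B(y,r)$ satisfy $\|p-gq\|>2r>\|p-q\|$ for $g\neq I$. Your identity $\operatorname{dist}(y,(y-gy)^\perp)=\tfrac12\|y-gy\|$, which gives that bound, is exact; the paper uses the same bound implicitly.

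The paper uses this estimate to show that the image of $B(y,r)$ is a flat geodesic ball, free of irregular points, in which any two points are joined by a unique minimizing geodesic that lifts to a segment. It then invokes Kendall's Theorem~7.3 on Riemannian centers of mass for uniqueness, and supplies existence by hand. You instead apply the estimate with $p=m$ and $q=p_i$ to get $F(m)=\frac1n\sum_i\|m-p_i\|^2$ on the open convex set $B(y,r)$. Existence and uniqueness then both reduce to the fact that a strictly convex quadratic has exactly one local minimizer, $\bar p$, which lies in the ball. The passage between local minimality in $V/G$ and in $V$ is justified because quotient-metric balls are images of Euclidean balls.

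Your route gives a self-contained proof with no need to check the hypotheses of Kendall's theorem (injectivity radius, regularity of the ball). It also identifies the Karcher mean explicitly as $[\bar p]$, which the paper only records in a remark afterward. The injectivity in your step (a) is harmless but not needed, since every representative of such a Karcher mean lying in $B(y,r)$ is forced to equal $\bar p$.

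What the paper's Riemannian framing buys is portability. When the quotient is curved (e.g., continuous group actions, which the discussion flags), $F$ is no longer a Euclidean quadratic on a small ball, but a Kendall-type convexity argument can still apply. Your closing worry about the constants is unnecessary; the inequalities close with strict margin as written.
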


\begin{proof}
Since \(y\) is regular and any irregular point lies in some \((y-gy)^\perp\), the choice of \(r\) gives that \(B(y,r)\) contains no irregular points and
\[
gB(y,r)\cap B(y,r)=\emptyset
\qquad\text{for all }g\neq I.
\]
In particular, the quotient map \(\pi:B(y,r)\to\widetilde B\subset V/G\) is injective and \(\widetilde B\) inherits a flat Riemannian structure.

We next show that every pair of points in \(\widetilde B\) is joined by a unique minimizing geodesic that remains in the ball. Let \(p,q\in B(y,r)\). For every \(g\neq I\),
\begin{align*}
\|p-gq\|
&\geq \|y-gy\|-\|p-y\|-\|gq-gy\| \\
&= \|y-gy\|-\|p-y\|-\|q-y\| 
> 4r-2r
=2r,
\end{align*}
whereas $\|p-q\|\leq \|p-y\|+\|q-y\|<2r$.
Thus, $d([p],[q])=\|p-q\|$,
and this distance is uniquely attained by the identity. Taking \(p=y\) shows that \(\widetilde B\) is the geodesic ball of radius \(r\) about \([y]\). Hence the unique geodesic from \([p]\) to \([q]\) lifts to the line segment between \(p\) and \(q\), which remains in \(B(y,r)\).

By hypothesis, the support of the data lies within $\widetilde B$.
This is a regular geodesic ball: its sectional curvature is zero, and its radius is smaller than the injectivity radius at $[y]$. 
Thus Theorem~7.3 of~\cite{kendallProbabilityConvexityHarmonic1990} gives uniqueness of a Karcher mean within $\widetilde B$. 
Existence follows by taking the Euclidean mean of the representatives in $B(y,r)$.
\end{proof}

Here is one interpretation of this result: 
If we initialize gradient descent of the Fr\'echet functional at a regular point $y$ that satisfies the hypothesis of Proposition~\ref{prop:riemannian}, and if the iteration stays in the ball~$B(y,r)$, then it converges to the unique local minimizer in that ball.
Much like Theorem~\ref{thm:euc_mean}, the Karcher mean in the ball is represented by the Euclidean mean of the points \(p_i\in[x_i]\cap B(y,r)\). 
By requiring $r$ to be small, we ensure that the quotient map preserves pairwise distances.
In the next section, we formalize this \emph{alignment} property, identify a multitude of sets that satisfy this property, and prove that it guarantees global optimality of the Euclidean mean.
\section{Alignment}
\label{sec:alignment}

The previous section identified some algorithmic difficulties with computing Fr\'echet means.
In this section, we identify a family of problem instances for which computing the Fr\'echet mean is utterly painless.

\subsection{Introduction to aligned and alignable sets}
\label{sec.alignment}

We start with a definition:
\begin{definition}
    A set $S \subseteq V$ is \textbf{aligned} with respect to $G$ if for every $x,y \in S$,
    $$d([x],[y]) = \| x - y \|.$$
\end{definition}
Alternatively stated, $S$ is aligned when the quotient map $V \to V/G$ restricts to an isometric embedding of $S$.\footnote{This idea is discussed in~\cite{devilliersInconsistencyTemplateEstimation2017} using the nomenclature ``congruent sections."} 
As we will soon see, since aligned data exhibits identical geometry to the corresponding points in orbit space, when computing Fr\'echet means, one may equivalently work directly with the original data, which is easier thanks to its Euclidean setting.

For shorthand, we will say two points $x,y \in V$ are aligned if the set $\{x,y\}$ is aligned, and a sequence $\{x_i\}_{i=1}^n$ of points in $V$ is aligned if the underlying set $\{x_1,\ldots,x_n\}$ is aligned.
Two points are aligned precisely when the minimum in their quotient distance is attained by the identity group element. 
We say that a set is \textbf{maximally aligned} if it is aligned and not strictly contained in any other aligned set. Some examples follow below.

\begin{example}
\label{ex:perm_alignment}
\label{ex:reflection}
Let $G=S_d$ act on $V=\mathbb{R}^d$ by coordinate permutations.
Two points are aligned precisely when the vector entries are sorted in the same order.
More generally, when $G \leq O(V)$ is a reflection group, then the closure of any Weyl chamber is a maximally aligned set.
\end{example}

\begin{example}
    \label{ex:rotation}
    Suppose $g \in SO(2)$ has order $k>1$ acting canonically on $V=\mathbb{R}^2$. Then the maximally aligned sets with respect to $G = \langle g \rangle$ are the convex cones of angular width $\frac{\pi}{k}$.
\end{example}

\begin{example}
\label{ex.two cones}
    Consider $G=\pm$ acting on $V=\mathbb{R}^3$. Both the nonnegative octant and the ``ice cream cone'' 
    \[
    \Big\{ (x_1,x_2,x_3) \in \mathbb{R}^3 : \sqrt{x_1^2+x_2^2} \leq x_3\Big\}
    \] 
    form maximally aligned sets.
    Despite the fact that both aligned sets are maximal, they are distinct in structure; see  Figure~\ref{fig:aligned_sets} for an illustration.
\end{example}

We suspect it would be nontrivial to find a nice description of all maximally aligned sets, but we \textit{can} say they are all closed convex cones.

\begin{proposition}
For $G \leq O(V)$, every set in $V$ that is maximally aligned with respect to $G$ is a closed convex cone.
\end{proposition}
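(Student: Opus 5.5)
The plan is to show that the closed convex conic hull of an aligned set is again aligned. Maximality then forces a maximally aligned set $S$ to equal its closed convex conic hull, so $S$ is a closed convex cone. Since $G\leq O(V)$ need not be finite here, I will work with the infimum in~\eqref{eq:dQ} and use only inequalities that hold for every $g\in G$.

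\textbf{Step 1: a bilinear reformulation.} Since $\|gy\|=\|y\|$ for every $g\in G$, we have $\|x-gy\|^2=\|x\|^2+\|y\|^2-2\langle x,gy\rangle$. Hence $d([x],[y])=\|x-y\|$ holds if and only if $\|x-y\|\leq\|x-gy\|$ for all $g\in G$, which is equivalent to
\[
\langle x,y\rangle-\langle x,gy\rangle\geq 0\qquad\text{for all }g\in G.
\]
So a set $S$ is aligned exactly when $B_g(x,y)\coloneqq\langle x,(I-g)y\rangle\geq 0$ for every $g\in G$ and every ordered pair $x,y\in S$, including $x=y$. For each fixed $g$, the map $B_g$ is a continuous bilinear form on $V\times V$. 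This observation does all the real work.

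\textbf{Step 2: conic combinations stay aligned.} Let $S$ be aligned, and take $a=\sum_i\alpha_ix_i$ and $b=\sum_j\beta_jx_j$ with $x_i\in S$ and $\alpha_i,\beta_j\geq 0$. By bilinearity,
\[
B_g(a,b)=\sum_{i,j}\alpha_i\beta_jB_g(x_i,x_j)\geq 0,
\]
because each term $B_g(x_i,x_j)$ is nonnegative by Step~1 (the diagonal terms $i=j$ are covered because $S$ is aligned with itself, or directly by Cauchy--Schwarz). The empty combination gives $0$, and $B_g(0,\cdot)=0$. Therefore the conic hull $\operatorname{cone}(S)$ is aligned.

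\textbf{Step 3: closure and maximality.} Each $B_g$ is continuous, so the inequalities $B_g(a,b)\geq 0$ pass to limits. Hence $\overline{\operatorname{cone}(S)}$ is aligned, and it is a closed convex cone containing $S$. If $S$ is maximally aligned, maximality forces $S=\overline{\operatorname{cone}(S)}$, so $S$ is a closed convex cone.

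There is no serious obstacle. The only point needing care is Step~1: recasting the alignment condition as the nonnegativity of bilinear forms, which requires the orthogonality of the action so that the norm terms cancel. It also requires keeping the diagonal pairs $x=y$ in the double sum.
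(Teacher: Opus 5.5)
Your proof is correct and takes essentially the same route as the paper. Both rewrite alignment as $\langle x,(I-g)y\rangle\geq 0$ for all $g\in G$ and $x,y\in S$, observe that this bilinear condition survives nonnegative combinations and limits, and conclude by maximality that $S$ equals the closure of its conic hull; your extra care with the infimum for infinite $G$ and with diagonal pairs just makes explicit what the paper leaves implicit.
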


\begin{proof}
    Suppose $S$ is maximally aligned, and let $C \supseteq S$ denote the topological closure of its conic hull. 
    Since $G$ is orthogonal, the alignment of $S$ is equivalent to
    \[
    \langle x,(I-g)y\rangle \geq 0 \qquad \text{for all $x,y \in S$, $g \in G$.}
    \]
    This inequality is preserved under nonnegative linear combinations and limits in $x$ and $y$, so it holds for any $x,y \in C$ as well.
    Thus $C$ is aligned, and so the maximality of~$S$ gives
    $S=C$.
\end{proof}

\begin{figure}
\centering
\begin{tikzpicture}[
  x=1.50cm,y=1.50cm,
  line cap=round,line join=round,
  every node/.style={text=ink},
  ray/.style={draw=ink,line width=.75pt},
  coordinateaxis/.style={ray,-{Stealth[length=4pt,width=3.5pt]}},
  continuation/.style={coordinateaxis},
  hiddenaxis/.style={ray,dash pattern=on 2.5pt off 2.5pt},
  axislabel/.style={inner sep=4pt}
]
\pgfmathsetmacro{\elevation}{23}
\pgfmathsetmacro{\s}{sin(\elevation)}
\pgfmathsetmacro{\c}{cos(\elevation)}

\begin{scope}[shift={(0,0)}]
  \pgfmathsetmacro{\leftscale}{1.65}
  \pgfmathsetmacro{\azimuth}{-42}
  \pgfmathsetmacro{\ax}{-\leftscale*sin(\azimuth)}
  \pgfmathsetmacro{\ay}{-\leftscale*\s*cos(\azimuth)}
  \pgfmathsetmacro{\bx}{\leftscale*cos(\azimuth)}
  \pgfmathsetmacro{\by}{-\leftscale*\s*sin(\azimuth)}
  \pgfmathsetmacro{\cz}{\leftscale*\c}
  \begin{scope}[shift={(-.76,0)}]
  \coordinate (O) at (0,0);
  \coordinate (A) at (\ax,\ay);
  \coordinate (B) at (\bx,\by);
  \coordinate (C) at (0,\cz);

  \foreach \P/\Q/\strength in {A/C/.17,B/C/.10,A/B/.12}{
    \fill[cone,opacity=\strength] (O)--(\P)--(\Q)--cycle;
    \foreach \j in {0,...,19}{
      \pgfmathsetmacro{\lo}{1+.016*\j}
      \pgfmathsetmacro{\hi}{1+.016*(\j+1)}
      \pgfmathsetmacro{\fade}{\strength*(20-\j)/20}
      \fill[cone,opacity=\fade]
        ($(O)!\lo!(\P)$)--($(O)!\hi!(\P)$)--
        ($(O)!\hi!(\Q)$)--($(O)!\lo!(\Q)$)--cycle;
    }
  }

  \filldraw[fill=section,fill opacity=.43,draw=sectionedge,line width=.9pt]
    (A)--(B)--(C)--cycle;
  \draw[ray] (A)--(O);
  \draw[hiddenaxis] (O)--(B);
  \draw[ray] (O)--(C);

  \draw[continuation] (A)--($(O)!1.43!(A)$)
    node[right,axislabel] {$x_1$};
  \draw[continuation] (B)--($(O)!1.43!(B)$)
    node[right,axislabel] {$x_2$};
  \draw[continuation] (C)--($(O)!1.39!(C)$)
    node[right,axislabel] {$x_3$};
  \fill[ink] (O) circle[radius=.022];
  \end{scope}

  \node at (0,-1.19) {$x_1,x_2,x_3\geq0$};
\end{scope}

\begin{scope}[shift={(4.90,0)}]
  \coordinate (Q) at (0,0);
  \pgfmathsetmacro{\tangentangle}{asin(tan(\elevation))}
  \pgfmathsetmacro{\tangentx}{sqrt(1-tan(\elevation)^2)}
  \pgfmathsetmacro{\tangenty}{\c-\s*tan(\elevation)}
  \coordinate (L) at (-\tangentx,\tangenty);
  \coordinate (R) at (\tangentx,\tangenty);

  \draw[coordinateaxis] (Q)--({.94/sqrt(2)},{-.94*\s/sqrt(2)})
    node[below right,axislabel] {$x_2$};
  \draw[coordinateaxis] (Q)--({-.94/sqrt(2)},{-.94*\s/sqrt(2)})
    node[below left,axislabel] {$x_1$};

  \path[fill=cone,fill opacity=.20]
    (Q)--(R)
    arc[start angle=-\tangentangle,end angle={180+\tangentangle},
      x radius=1,y radius=\s]--cycle;

  \foreach \j in {0,...,19}{
    \pgfmathsetmacro{\lo}{1+.016*\j}
    \pgfmathsetmacro{\hi}{1+.016*(\j+1)}
    \pgfmathsetmacro{\fade}{.20*(20-\j)/20}
    \fill[cone,opacity=\fade]
      ({\lo*\tangentx},{\lo*\tangenty})--
      ({\hi*\tangentx},{\hi*\tangenty})
      arc[start angle=-\tangentangle,end angle={180+\tangentangle},
        x radius=\hi,y radius={\hi*\s}]--
      ({-\lo*\tangentx},{\lo*\tangenty})
      arc[start angle={180+\tangentangle},end angle=-\tangentangle,
        x radius=\lo,y radius={\lo*\s}]--cycle;
  }

  \draw[draw=ink,line width=.4pt]
  ($(Q)!1.4!(L)$)--(Q)--($(Q)!1.4!(R)$);

  \filldraw[fill=section,fill opacity=.53,draw=sectionedge,line width=.9pt]
    (0,\c) ellipse[x radius=1,y radius=\s];

  \draw[hiddenaxis] (Q)--(0,\c);
  \draw[coordinateaxis] (0,\c)--(0,{2.11})
    node[right,axislabel] {$x_3$};

  \fill[ink] (Q) circle[radius=.022];

  \node at (0,-1.19) {$\sqrt{x_1^2+x_2^2}\leq x_3$};
\end{scope}
\end{tikzpicture}
\caption{The nonnegative orthant (left) and the ice cream cone (right) form maximally aligned sets in $V=\mathbb{R}^3$ under the action of $G = \pm$; see Example~\ref{ex.two cones}.
For the sake of illustration, we display cross sections with the planes $x_1+x_2+x_3=1$ and $x_3=1$, respectively.
}
    \label{fig:aligned_sets}
\end{figure}

Next, we formalize the idea that changing orbit representatives can transform a non-aligned set into an aligned set.

\begin{definition}
A set $S \subseteq V$ is \textbf{alignable} with respect to $G$ if for every $x \in S$ there exists $g_x \in G$ such that the set $\{ g_x x:x \in S\}$ is aligned.
\end{definition}

Importantly, not every set is alignable.
For example, the points
\[
(2,0), 
\qquad
(-1,\sqrt{3}), 
\qquad
(-1,-\sqrt{3})
\]
are not alignable with respect to $\pm$.
(This is straightforward to verify.)
Meanwhile, when a generic set $S$ \textit{is} alignable, it is easy to find aligned orbit representatives: simply align all of the points to any chosen member of $S$.
For example, the points
\[
(1,0),
\qquad
(-1,-1),
\qquad
(-1,-2)
\]
are not aligned with respect to $\pm$, but they become aligned once we negate the second and third vectors so that they both have a positive inner product with the first vector.
This reduces general alignment to a pairwise alignment problem.
The following examples illustrate how the computational complexity of pairwise alignment is sensitive to the group.

\begin{example}
   \label{ex:cyclic_perm_alignment}
    Let $G=C_d$ act on $V=\mathbb{R}^{d}$ as cyclic permutations of coordinates. 
    Two vectors $x$ and $y$ are aligned precisely when their circular cross correlation peaks at $k=0$. 
    In terms of the discrete Fourier transform $\mathcal{F}$ and entrywise product $\circ$, this is equivalent to $\mathcal{F}^{-1} ( \mathcal{F}(x) \circ \overline{\mathcal{F}(y)} )$ being maximized at the zeroth entry. 
    Thus, the fast Fourier transform offers an efficient method for pairwise alignment in this setting.
\end{example}

\begin{example}
    \label{ex:perm_alignment_2}
    Let $G=S_n$ act on $V=\mathbb{R}^{n \times n}$ by conjugation.
    Any method of pairwise alignment in this setting could be used to decide whether a graph contains a Hamiltonian circuit (by aligning an $n$-cycle's adjacency matrix to that of the graph).
    As such, the alignment problem in this case is $\mathsf{NP}$-hard.
\end{example}

\subsection{Aligned sets have easy-to-compute Fr\'echet means}

Aligned sets correspond to easy instances of geometric problems in the quotient space, since the (typically hard) quotient geometry is identical to the original (easy) Euclidean geometry.
We are now ready to make this explicit:
It is easy to compute the Fr\'echet mean of aligned data.

\begin{theorem}
\label{thm:frechet_aligned}
Suppose $\{x_i\}_{i=1}^n\subset V$ is aligned with respect to $G\leq O(V)$, and let $\mu\in V$ denote its Euclidean mean.
Then $[\mu]\in V/G$ is a Fr\'echet mean of $\{[x_i]\}_{i=1}^n$. 
\end{theorem}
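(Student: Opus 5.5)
We will reduce the claim to a comparison over the finite set of sign/group assignments, exactly as in the proof of Theorem~\ref{thm:unique}. There we showed
\[
\min_{m\in V}F(m)=\min_{\mathbf g\in G^n}\bigg(\frac1n\sum_{i=1}^n\|x_i\|^2-\Big\|\frac1n\sum_{j=1}^n g_jx_j\Big\|^2\bigg),
\]
using orthogonality to write $\|g_ix_i\|=\|x_i\|$. So it suffices to show that $\mathbf g=(I,\dots,I)$ maximizes $\|\sum_j g_jx_j\|^2$ over $G^n$. Then $\mu$ attains $\min_{m} F(m)$: indeed $F(\mu)\le\frac1n\sum_i\|\mu-x_i\|^2$, and this equals the minimal value, so $[\mu]$ is a Fr\'echet mean.

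To compare the two quantities, we expand
\[
\Big\|\sum_j g_jx_j\Big\|^2=\sum_{i,j}\langle g_ix_i,g_jx_j\rangle=\sum_{i,j}\langle x_i,g_i^{-1}g_jx_j\rangle.
\]
The key input is alignment. Since $\|x_i-hx_j\|^2=\|x_i\|^2+\|x_j\|^2-2\langle x_i,hx_j\rangle$, the condition $d([x_i],[x_j])=\|x_i-x_j\|$ is equivalent to
\[
\langle x_i,x_j\rangle\ge\langle x_i,hx_j\rangle\quad\text{for all }h\in G.
\]
This is the same inequality $\langle x,(I-g)y\rangle\ge0$ used earlier for maximally aligned sets. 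Applying it termwise with $h=g_i^{-1}g_j$ gives
\[
\sum_{i,j}\langle x_i,g_i^{-1}g_jx_j\rangle\le\sum_{i,j}\langle x_i,x_j\rangle=\Big\|\sum_j x_j\Big\|^2,
\]
which is what we need.

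An alternative route avoids the $G^n$ reduction altogether. For any $m$, we have $F(m)=\min_{\mathbf g}\frac1n\sum_i\|m-g_ix_i\|^2$. For each fixed $\mathbf g$, the inner sum is at least its value at the mean $\frac1n\sum_i g_ix_i$, and that value equals $\frac1{2n^2}\sum_{i,j}\|g_ix_i-g_jx_j\|^2$. Each term satisfies $\|g_ix_i-g_jx_j\|\ge d([x_i],[x_j])=\|x_i-x_j\|$, so the value is at least $\frac1{2n^2}\sum_{i,j}\|x_i-x_j\|^2=\frac1n\sum_i\|\mu-x_i\|^2\ge F(\mu)$. This version is perhaps cleaner, and I would present it.

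There is no real obstacle. The only point requiring care is the pairwise-variance identity $\frac1n\sum_i\|y_i-\bar y\|^2=\frac1{2n^2}\sum_{i,j}\|y_i-y_j\|^2$, applied once to $y_i=g_ix_i$ and once to $y_i=x_i$. It is also worth noting that the proof never uses finiteness of $G$ beyond attainment of the minimum in the definition of $F$, and it would work with an infimum in place of the minimum.
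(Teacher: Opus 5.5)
Your proposal is correct, and the route you say you would present is essentially the paper's proof: the pairwise-variance identity, the termwise bound $\|g_ix_i-g_jx_j\|\ge d([x_i],[x_j])$, and equality at $g_1=\cdots=g_n=I$ by alignment. The only difference is that the paper invokes Theorem~\ref{thm:euc_mean} to restrict attention to Euclidean means of orbit representatives. You instead lower-bound $F(m)$ for arbitrary $m$ by interchanging the minima directly, as in the proof of Theorem~\ref{thm:unique}. This avoids needing a minimizer of $F$ to exist and justifies your remark that the argument works for infinite $G$ with an infimum.
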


\begin{proof}
First, we recall a standard result about mean squared displacement:
For every $z_1,\ldots,z_n\in V$ with centroid $\mu\in V$, it holds that
\[
\|z_i-z_j\|^2
=\|(z_i-\mu)-(z_j-\mu)\|^2
=\|z_i-\mu\|^2-2\langle z_i-\mu,z_j-\mu\rangle+\|z_j-\mu\|^2,
\]
and so the mean squared displacement has the symmetrized form:
\begin{align*}
&\frac{1}{2n^2}\sum_{i,j=1}^n\|z_i-z_j\|^2\\
&=\frac{1}{2n}\sum_{i=1}^n\|z_i-\mu\|^2-\bigg\langle \frac{1}{n}\sum_{i=1}^n(z_i-\mu),\frac{1}{n}\sum_{j=1}^n(z_j-\mu)\bigg\rangle+\frac{1}{2n}\sum_{j=1}^n\|z_j-\mu\|^2\\
&=\frac{1}{n}\sum_{i=1}^n\|z_i-\mu\|^2.
\end{align*}
We now use this to prove our result.
Since every Karcher mean is a Euclidean mean by Theorem~\ref{thm:euc_mean}, it follows that $F$ is minimized by a point of the form $m=\frac{1}{n}\sum_{j=1}^ng_jx_j$, where $g_1,\ldots,g_n\in G$ together minimize the quantity
\[
F(m)=\frac{1}{n}\sum_{i=1}^n\bigg\|g_ix_i-\frac{1}{n}\sum_{j=1}^ng_jx_j\bigg\|^2
=\frac{1}{2n^2}\sum_{i,j=1}^n\|g_ix_i-g_jx_j\|^2
\geq\frac{1}{2n^2}\sum_{i,j=1}^nd([x_i],[x_j])^2.
\]
Since $\{x_i\}_{i=1}^n$ is aligned by assumption, equality in the lower bound is achieved by $g_1=\cdots=g_n=I$.
Thus, $F$ is minimized by $\mu$, as claimed.
\end{proof}

We note that the converse of Theorem~\ref{thm:frechet_aligned} is far from true.
In particular, given \textit{any} non-alignable $\{y_i\}_{i=1}^n$, select $\mu$ so that $[\mu]$ is the Fr\'echet mean of $\{[y_i]\}_{i=1}^n$.
Since every Fr\'echet mean is a Karcher mean, and furthermore by Theorem~\ref{thm:euc_mean}, every Karcher mean is a Euclidean mean, it follows that there exist $g_1,\ldots,g_n\in G$ for which $\mu=\frac{1}{n}\sum_{i=1}^ng_iy_i$.
Now, define $x_i{\coloneqq}g_iy_i$ for each $i$.
Since $\{y_i\}_{i=1}^n$ was not alignable by assumption, it follows that $\{x_i\}_{i=1}^n$ is not aligned, and yet it satisfies the conclusion of Theorem~\ref{thm:frechet_aligned}.

A generalization of these ideas was studied in~\cite{devilliers2017template}, where the authors consider noisy data generated around a template in $V$, which they project into the quotient space $V/G$.
Under this setup, they investigate the consistency of the Fr\'echet mean, specifically, when does the mean of data in $V$ correspond to the Fr\'echet mean in $V/G$?
By Theorem~\ref{thm:frechet_aligned}, the Fr\'echet mean is consistent if the support of the distribution is aligned, but what if it isn't?
Is it necessary or sufficient for every point in the support of the distribution to be aligned to the mean?
The authors proved necessity under mild conditions, but left sufficiency open.
Below, we present an example that disproves sufficiency.

\begin{example}
We consider an explicit example with $V=\mathbb{R}^2$ and $G=\pm$.
Put
\[
x_1{\coloneqq}(\epsilon,1),
\qquad
x_2{\coloneqq}(\epsilon,-1),
\qquad
x_3{\coloneqq}(3,0).
\]
Each of these points is aligned to the Euclidean centroid $(1+\frac{2}{3}\epsilon,0)$, and yet the Fr\'echet functional is smaller at $(1,\frac{2}{3})$ when $\epsilon>0$ is sufficiently small.
Thus, the Fr\'echet mean is inconsistent despite each $x_i$ being aligned to the Euclidean centroid.
\end{example}

\section{{Synchronization relaxations of the Fr\'echet objective}}
\label{sec:relaxations}

While alignment allows us to compute a Fr\'echet mean,
in this section, we explore how to compute a Fr\'echet mean when data is \textit{not} alignable.
By Theorem~\ref{thm:euc_mean}, computing a Fr\'echet mean is equivalent to finding orbit representatives that are optimal in some sense, so we can express our problem as an optimization over the group.

We state our results for $x_i \in \mathbb{R}^d$ for the sake of readability, though they extend directly to $x_i \in \mathbb{R}^{d\times p}$ with $p>1$ and $G \leq O(d)$ acting on the left.

\begin{theorem}
\label{theorem:rewriting}
Fix any compact $G \leq O(d)$ and $x_1, \dots, x_n \in \mathbb{R}^{d}$.
Every Fr\'echet mean of $\{[x_i]\}_{i=1}^n$ is given by the orbit of $\frac{1}{n} \sum_{i=1}^n g_i x_i$, where $g_1,\ldots,g_n\in G$ together form any solution to the program
\begin{equation}
\label{eqn:block}
\text{maximize}
\qquad
\sum_{i,j=1}^n \langle x_i x_j^\T, g_i^\T g_j \rangle_F
\qquad
\text{subject to}
\qquad
g_1,\ldots,g_n\in G.
    \end{equation}  
\end{theorem}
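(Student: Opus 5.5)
Following the proof of Theorem~\ref{thm:unique}, I would first swap the order of minimization in the Fr\'echet functional. For any $m\in\mathbb{R}^d$ we have
\[
F(m)=\min_{g\in G^n}\frac{1}{n}\sum_{i=1}^n\|m-g_ix_i\|^2 .
\]
Because $G$ is compact and $g\mapsto\|m-gx_i\|$ is continuous, each inner minimum is attained, so
\[
\inf_m F(m)=\inf_{\mathbf g\in G^n}\min_{m}\frac{1}{n}\sum_i\|m-g_ix_i\|^2 .
\]
For fixed $\mathbf g$, the inner problem is a strictly convex quadratic in $m$. Its unique minimizer is $m_{\mathbf g}=\frac1n\sum_j g_jx_j$. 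Expanding as in the proof of Proposition~\ref{prop.np hard}, and using $\|g_ix_i\|=\|x_i\|$ by orthogonality, the minimum value is
\[
\frac1n\sum_i\|x_i\|^2-\Big\|\frac1n\sum_j g_jx_j\Big\|^2 .
\]
Expanding the squared norm and using $\langle g_ix_i,g_jx_j\rangle=\operatorname{tr}(x_i^\T g_i^\T g_jx_j)=\langle x_ix_j^\T,g_i^\T g_j\rangle_F$ gives
\[
\min_m \frac1n\sum_i\|m-g_ix_i\|^2=\frac1n\sum_i\|x_i\|^2-\frac1{n^2}\sum_{i,j}\langle x_ix_j^\T,g_i^\T g_j\rangle_F .
\]
So minimizing over $\mathbf g$ is exactly the program \eqref{eqn:block}. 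This objective is continuous on the compact set $G^n$, so a maximizer exists.

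Both directions of the correspondence then follow from this identity.

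\textbf{Solutions give Fr\'echet means.} If $\mathbf g$ solves \eqref{eqn:block}, then
\[
F(m_{\mathbf g})\le\frac1n\sum_i\|m_{\mathbf g}-g_ix_i\|^2=\inf F,
\]
so $[m_{\mathbf g}]$ is a Fr\'echet mean.

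\textbf{Every Fr\'echet mean arises this way.} Let $[\mu]$ be a Fr\'echet mean. For each $i$, choose $g_i$ attaining $\min_{g\in G}\|\mu-gx_i\|$; this is possible by compactness. Then
\[
\inf F=F(\mu)=\frac1n\sum_i\|\mu-g_ix_i\|^2\ge\min_m\frac1n\sum_i\|m-g_ix_i\|^2\ge\inf F,
\]
so all of these are equalities. The second equality shows that $\mathbf g$ maximizes \eqref{eqn:block}. The first equality, together with uniqueness of the minimizer of the strictly convex quadratic, forces $\mu=\frac1n\sum_ig_ix_i$. Hence the orbit of $\mu$ has the stated form.

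This argument does not need Theorem~\ref{thm:euc_mean}, which was stated only for finite $G$; it works directly for compact $G$. The only step requiring care is this passage from finite to compact $G$. It is handled by the attainment and continuity arguments above and by the fact that strict convexity pins down $\mu$. Everything else is the routine expansion already carried out in Proposition~\ref{prop.np hard}.

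Finally, the statement's phrase ``any solution'' is read as the two-directional correspondence just established. Every solution yields a Fr\'echet mean, and every Fr\'echet mean comes from some solution. If the phrase were read as ``every Fr\'echet mean is obtained from every solution,'' the claim would fail when the mean is not unique, so I would make the intended reading explicit when writing the proof.
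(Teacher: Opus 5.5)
Your proof is correct. It reaches the same reduction as the paper, but by a somewhat different route.

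The paper's proof is a two-line sketch that defers to the proof of Theorem~\ref{thm:frechet_aligned}. There, Theorem~\ref{thm:euc_mean} (Karcher means are Euclidean means of orbit representatives, proved via Lemma~\ref{lem.local min of min of cvx}) forces a minimizer of $F$ to have the form $\frac1n\sum_j g_jx_j$. The mean-squared-displacement identity then turns the objective into $\frac{1}{2n^2}\sum_{i,j}\|g_ix_i-g_jx_j\|^2$, whose expansion leaves the cross terms.

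You instead swap the two minimizations, as in the proofs of Theorem~\ref{thm:unique} and Proposition~\ref{prop.np hard}. You then evaluate the inner quadratic in closed form, and replace the Karcher-mean lemma by a direct chain of inequalities plus strict convexity of the inner problem.

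The paper's version is shorter because it reuses earlier results. However, those results are stated for finite $G$ (the lemma concerns finitely many convex functions). Invoking them for compact $G$ therefore tacitly requires noticing that the lemma's argument only uses attainment of the minimum. Your argument covers compact $G$ directly through attainment and continuity. It also makes both directions of the correspondence explicit, in particular why the closest representatives to an arbitrary Fr\'echet mean form an optimal solution of~\eqref{eqn:block}. The paper's phrase ``equivalent to maximizing'' leaves this implicit.

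Your caveat about the phrase ``any solution'' is also well taken. In the orthogonal-vectors example following Theorem~\ref{thm:unique}, distinct solutions produce distinct Fr\'echet means, so only your two-directional reading is true.
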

\begin{proof}
    As in the proof of Theorem~\ref{thm:frechet_aligned}, we seek $g_1, \dots, g_n \in G$ that minimize 
    \[
        \sum_{i,j=1}^n \| g_i x_i - g_j x_j\|^2.
    \]
    Expanding the square, this is equivalent to maximizing the sum of cross terms:
    \[
          \sum_{i,j=1}^n \langle g_ix_i, g_jx_j\rangle 
          = \sum_{i,j=1}^n \langle x_i x_j^\T, g_i^\T g_j \rangle_F.
          \]
\end{proof}

We observe that~\eqref{eqn:block} is similar to the objective found in the \textit{group synchronization problem}~\cite{singerAngularSynchronizationEigenvectors2009, bandeiraTightnessMaximumLikelihood2017}, where the goal is to recover group elements $g_1,\ldots,g_n\in G$ that best fit noisy measurements of pairwise relations $g_{ij} \approx g_i g_j^{-1}$. 
Assuming an orthogonal group action, the synchronization problem can be formalized as
\[
\text{minimize}
\qquad
\sum_{i,j=1}^n \|g_{ij} - g_i g_j^\T\|_F^2
\qquad
\text{subject to}
\qquad
g_1,\ldots,g_n\in G.
\]
Expanding the Frobenius norm and applying the orthogonality of $G$ gives
\begin{equation}
\label{eqn:synch}
\text{maximize}
\qquad
\sum_{i,j=1}^n \langle g_{ij} , g_i g_j^\T\rangle_F
\qquad
\text{subject to}
\qquad
g_1,\ldots,g_n\in G.
\end{equation}
The following subsections exploit the similarity between~\eqref{eqn:block} and~\eqref{eqn:synch} to transfer ideas from group synchronization to Fr\'echet means.
We present spectral and semidefinite methods for the general setting in Sections~\ref{sec:general_spectral} and~\ref{sec:general_SDP}, and then we give specialized methods for $G=\pm$ in Section~\ref{sec:pm} and for general cyclic actions in Section~\ref{sec:cyclic}.

\subsection{Spectral Relaxation}
\label{sec:general_spectral}

Given $M\in\mathbb{R}^{nd\times nd}$, let $M_{[ij]}$ denote the $(i,j)$th $d\times d$ block of $M$.
For $G\leq O(d)$, consider the program
\[
\text{maximize}
\qquad
\sum_{i,j=1}^n \langle M_{[ij]}, g_i^\T g_j \rangle_F
\qquad
\text{subject to}
\qquad
g_1,\ldots,g_n\in G.
\]
For example, the program~\eqref{eqn:block} takes this form once we put $x\coloneqq{}[x_1;\cdots;x_n]\in\mathbb{R}^{nd}$ and $M\coloneqq{}xx^\T$.
We may reformulate this program as
\begin{equation}
\label{eq.reformulated frechet}
\text{maximize}
\quad
\langle M, Y \rangle_F
\quad
\text{subject to}
\quad
Y 
= \begin{bmatrix} g_1^\T \\ \vdots \\ g_n^\T \end{bmatrix} 
\begin{bmatrix} g_1 \dots g_n\end{bmatrix},
\quad
g_1,\ldots,g_n\in G.
\end{equation}
Considering $Y$ is $n$ times a rank-$d$ orthogonal projection matrix, one might consider the spectral relaxation
\begin{equation}
\label{eq.spectral relaxation}
\text{maximize}
\quad
\langle M, nP \rangle_F
\quad
\text{subject to}
\quad
P^2=P^\T=P,
\quad
\operatorname{rank}P=d.
\end{equation}
By the von Neumann trace inequality, the solution to \eqref{eq.spectral relaxation} is given by the orthogonal projection onto the top $d$ eigenvectors of (the symmetric part of) $M$.
Next, we may round a solution $P$ of~\eqref{eq.spectral relaxation} to a feasible point of~\eqref{eq.reformulated frechet} by first writing $Z^\T Z=nP$ for some $Z\in\mathbb{R}^{d\times nd}$, and then for some appropriate choice of $Q\in O(d)$, rounding each $d\times d$ block of $QZ$ to the nearest element of $G$.

Unfortunately, our intended use case takes $M$ to be the rank-$1$ matrix $xx^\T$, thereby making this spectral relaxation degenerate
(though such degeneracy may not be an issue in the setting where $x_i\in\mathbb{R}^{d\times p}$ with $p\geq d$).
To avoid this rank-$1$ degeneracy, we are inclined to pass to a nearby synchronization problem:
Take $M_{[ij]}$ to be the group element closest to $x_ix_j^\T$ in Frobenius norm.
This group element $g_{ij}$ happens to be the $g\in G$ that maximizes $\langle x_i,gx_j\rangle$, i.e., that aligns $x_j$ to $x_i$.
As a more refined approach, one might take $M_{[ij]}$ to be a \textit{nonnegative multiple} of a group element that is closest to $x_ix_j^\T$ in Frobenius norm; modulo a constant factor, this takes $M_{[ij]}$ to be the positive part $\langle x_i,g_{ij}x_j\rangle_+$ times the closest group element $g_{ij}$.
See Algorithm~\ref{alg:spectral_general} for a complete description of our spectral relax-and-round algorithm.

\begin{algorithm}[ht]
\caption{Spectral Method for the Fr\'echet Mean in $V/G$}
\label{alg:spectral_general}
\begin{algorithmic}[1]
\State \textbf{Input:} $x_1,\dots,x_n \in \mathbb{R}^{d \times p}$, finite $G \le O(d)$, $T_{\text{max}} \in \mathbb{N}$
\State \textbf{Output:} Member of $\mathbb{R}^{d \times p}$ whose $G$-orbit is an approximate Fr\'echet mean

\Statex

\If{$p < d$}
    \State \textbf{Synchronization Approximation:}
    \For{$i=1,\dots,n$}
        \State $M_{ii} = \|x_i\|_F^2 I_d$
    \EndFor
    \For{$1 \le i < j \le n$}
        \State $g_{ij} \in \arg\max_{g\in G} \langle x_i, gx_j\rangle$
        \State $M_{ij} = \langle x_i, g_{ij}x_j\rangle_+\, g_{ij}$
        \State $M_{ji} = M_{ij}^\T $
    \EndFor
\Else
    \State $\mathbf{x} = [x_1;\dots;x_n] \in \mathbb{R}^{nd \times p}$
    \State $M = \mathbf{x}\mathbf{x}^\T $
\EndIf

\State Compute rank-$d$ spectral factor $Y \in \mathbb{R}^{nd \times d}$ of $M$

\Statex
\State \textbf{Procrustes-Based Rounding to $G^n$}

\State Let $Y_1$ be the first $d\times d$ block of $Y$
\State Compute SVD $Y_1^\T  = U\Sigma V^\T $, set $W = UV^\T $

\For{$i=1$ to $n$}
    \State $g_i \in \arg\min_{g\in G} \|Y_i W - g^\T \|_F$
\EndFor

\For{$t=1$ to $T_{\text{max}}$}
    \State $C = \sum_{i=1}^n Y_i^\T  g_i^\T$
    \State Compute SVD $C = U\Sigma V^\T $, set $W_{\text{new}} = UV^\T $

    \For{$i=1$ to $n$}
        \State $g_i \in \arg\min_{g\in G} \|Y_i W_{\text{new}} - g^\T \|_F$
    \EndFor

    \State $W \gets W_{\text{new}}$
\EndFor

\State \Return $\frac{1}{n} \sum_{i=1}^n g_i x_i$
\end{algorithmic}
\end{algorithm}

\subsection{Semidefinite Relaxation}
\label{sec:general_SDP}

We return to the program~\eqref{eq.reformulated frechet} with $M=xx^\T$, the feasibility region of which consists of all $Y\in\mathbb{R}^{nd\times nd}$ such that
\[
Y_{[ii]}=I_d \text{ for all }i,
\qquad
Y_{[ij]}\in G \text{ for all }i,j,
\qquad
Y\succeq0,
\qquad
\operatorname{rank}(Y)=d.
\]
To obtain a convex relaxation, we discard the rank constraint and replace $Y_{[ij]}\in G$ with $Y_{[ij]}\in\conv{G}$, the convex hull of $G\subseteq\mathbb{R}^{d\times d}$:

\medskip
\noindent\textbf{Fréchet Mean SDP:}
\begin{equation} 
\label{eqn:fullSDP}
\begin{split}
    \text{maximize} \qquad & \langle xx^\T, Y \rangle_F \\
    \text{subject to} \qquad & 
   Y_{[ii]}=I_d \text{ for all }i,
\quad
Y_{[ij]}\in \conv{G} \text{ for all }i,j,
\quad
Y\succeq0.   
\end{split}
\end{equation}
Unlike the spectral relaxation, the semidefinite relaxation is frequently tight in practice, meaning the solution to the SDP is feasible in~\eqref{eq.reformulated frechet} and exactly solves~\eqref{eqn:block}. 
When the relaxation is not tight, we may apply a rounding procedure to extract a feasible solution to the original problem, much like in the spectral method.

Taking a step back, we note that, given $\{x_i\}_{i=1}^n\subset\mathbb{R}^d$ and $\{g_i\}_{i=1}^n\subset G$, it is sometimes computationally feasible to determine whether the $G$-orbit of $\frac{1}{n}\sum_{i=1}^n g_ix_i$ is the Fr\'echet mean of $\{[x_i]\}_{i=1}^n$.
For example, this occurs when $\{g_ix_i\}_{i=1}^n$ is aligned as in Section~\ref{sec:alignment}.
Alternatively, this holds when the matrix $Y$ defined by $Y_{[ij]}=g_i^\T g_j$ solves the Fr\'echet mean SDP~\eqref{eqn:fullSDP}.
In general, given a candidate solution to a semidefinite program, one may certify its optimality using the \textit{dual program}.
In Appendix~\ref{sec:dual}, we prove the following:

\begin{lemma}
\label{lem.dual}
The dual program of~\eqref{eqn:fullSDP} is
\label{lemma:dual}
\begin{equation}
\label{eqn:sdp_dual}
\min_{W \succeq xx^\T }\,
\max_{H \in \mathcal{H}}\,
\langle H,W\rangle_F,
\end{equation}
where $\mathcal{H}$ consists of all symmetric $H\in\mathbb{R}^{nd\times nd}$ such that
\[
H_{[ii]}=I_d
\text{ for all } i,
\qquad
H_{[ij]}\in G
\text{ for all } i,j.
\]
\end{lemma}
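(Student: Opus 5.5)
We derive the dual by Lagrangian duality, dualizing only the semidefinite constraint and keeping the block constraints as a compact convex set. Let $\mathcal{C}$ be the set of symmetric $Y\in\mathbb{R}^{nd\times nd}$ with $Y_{[ii]}=I_d$ and $Y_{[ij]}\in\conv G$. Since $\conv G$ is closed under transposition ($G$ is a group of orthogonal matrices, so $g^\T=g^{-1}\in G$), symmetry is compatible with the block constraints. Then $\mathcal{C}=\conv\mathcal{H}$: every element of $\mathcal{H}$ lies in $\mathcal{C}$. Conversely, the off-diagonal blocks $Y_{[ij]}$, $i<j$, may be chosen independently in $\conv G$, so $\mathcal{C}$ is a product of copies of $\conv G$ (with the lower blocks determined by transposition). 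A product of convex hulls is the convex hull of the product, and that product is exactly $\mathcal{H}$. This identification is what lets the inner maximum be written over $\mathcal{H}$ rather than $\mathcal{C}$, because a linear function attains its maximum over $\conv\mathcal{H}$ at a point of $\mathcal{H}$.

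Next form the Lagrangian with a multiplier $Z\succeq0$ for $Y\succeq0$: $L(Y,Z)=\langle xx^\T,Y\rangle_F+\langle Z,Y\rangle_F$ for $Y\in\mathcal{C}$. For every $Z\succeq0$ and feasible $Y$, $L(Y,Z)\ge\langle xx^\T,Y\rangle_F$. The dual function is
\[
\max_{Y\in\mathcal{C}}\langle xx^\T+Z,Y\rangle_F=\max_{H\in\mathcal{H}}\langle H,xx^\T+Z\rangle_F .
\]
Substituting $W=xx^\T+Z$ turns the constraint $Z\succeq0$ into $W\succeq xx^\T$, and minimizing over $W$ gives \eqref{eqn:sdp_dual}. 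Weak duality is then immediate.

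Strong duality is the main step to check. I would use Slater's condition: $Y=I_{nd}$ lies in $\mathcal{C}$ (off-diagonal blocks $0$ are in $\conv G$ provided $\frac{1}{|G|}\sum_g g=0$, which holds when $G$ has no nonzero fixed vectors). More robustly, $I_{nd}\succ0$ is strictly feasible for the conic constraint, which is the only dualized constraint, while $\mathcal{C}$ is compact convex. Alternatively, apply Sion's minimax theorem directly. The function $\langle xx^\T+Z,Y\rangle$ is bilinear, $Y$ ranges over the compact convex $\mathcal{C}$, and $Z$ ranges over the convex cone $\{Z\succeq0\}$. Hence $\min_{Z\succeq0}\max_{Y\in\mathcal{C}}=\max_{Y\in\mathcal{C}}\inf_{Z\succeq0}$. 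The inner infimum is $\langle xx^\T,Y\rangle$ if $Y\succeq0$ and $-\infty$ otherwise (take $Z=tvv^\T$ with $v^\T Yv<0$), recovering \eqref{eqn:fullSDP}. Compactness of $\mathcal{C}$ is what makes Sion apply without any further qualification, so this route avoids needing a Slater point.

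Finally, note that the outer minimum is attained. The dual objective is convex and lower semicontinuous in $W$, and it is coercive on $\{W\succeq xx^\T\}$, since $I\in\mathcal{H}$ gives a lower bound by $\Tr W$. A minimizing sequence is therefore bounded, which justifies writing $\min$ rather than $\inf$.
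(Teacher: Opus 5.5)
Your derivation of the dual is correct, but it is organized differently from the paper's. The paper writes each block $Y_{[ij]}$ as a convex combination $\sum_\ell\lambda^{ij}_\ell g_\ell$ and passes to a standard-form conic program in $\lambda$. It then takes the textbook dual, with multipliers $\beta^{ij}$ for the simplex constraints and $V\succeq0$ for the PSD constraint. Finally it eliminates $\beta$ (its optimal value is a max over $G$), rescales $S=2V$, and substitutes $W=xx^\T+S$, tracking the dropped constants. You instead dualize only the PSD constraint and keep the block constraints as the compact set $\mathcal{C}$. Your identity $\mathcal{C}=\conv\mathcal{H}$ does in one step what the elimination of $\beta$ does there, so the min--max form appears directly as a support function, without auxiliary multipliers or bookkeeping of constants. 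For strong duality (which the lemma does not actually assert), the paper exhibits a strictly feasible \emph{dual} point ($V=I_{nd}$, $\beta$ large), valid for every finite $G$. Your Sion argument, which rests on compactness of $\mathcal{C}$, is equally valid and needs no constraint qualification.

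Two of your side remarks are wrong, though. First, a Slater point must lie in $\mathcal{C}$, so strict feasibility of $I_{nd}$ for the conic constraint is irrelevant when $I_{nd}\notin\mathcal{C}$. In fact, suppose $G$ fixes some $v\neq0$ (e.g., $S_d$ or $C_d$ permuting coordinates, as in most of the paper's applications). Then for $n\ge2$ no $Y\in\mathcal{C}$ is positive definite. Every block of $Y$ fixes $v$, so $u=(a_1v;\dots;a_nv)$ with $\sum_ia_i=0$ gives $u^\T Yu=(\sum_ia_i)^2\|v\|^2=0$. Primal Slater genuinely fails there, and strong duality is carried by Sion (or by the paper's dual Slater point).

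Second, in the attainment paragraph, $I_{nd}\notin\mathcal{H}$: the off-diagonal blocks of elements of $\mathcal{H}$ lie in $G$, and $0\notin G$. Averaging over $\mathcal{H}$ gives $I_n\otimes I_d+(\1\1^\T-I_n)\otimes P$, where $P=\frac{1}{|G|}\sum_{g\in G}g$ is the projection onto the fixed space. So the bound by $\Tr W$ holds only when $G$ has no nonzero fixed vector. Otherwise coercivity fails outright: the same $u$ satisfies $\langle H,uu^\T\rangle_F=0$ for every $H\in\mathcal{H}$, so the dual objective is constant along $W=xx^\T+t\,uu^\T$, $t\ge0$.

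Attainment can be rescued by splitting off the fixed space. Every element of $\conv G$ is the identity there, so the primal is a constant plus the same SDP for $G$ restricted to the orthogonal complement. On that complement your coercivity argument works, and an optimal dual point for the reduced problem lifts to an optimal one for the original. As written, however, the justification is false. Neither issue affects the lemma, which only identifies the dual program; the paper uses $\min$ notationally and never claims dual attainment.
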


Given a candidate solution $(g_1,\ldots,g_n)$ to~\eqref{eqn:block}, then by renaming $x_i\leftarrow g_ix_i$, we may establish the optimality of $(g_1,\ldots,g_n)$ by showing that $\1 \1^\T  \otimes I_d$ is a solution to the Fr\'echet mean SDP~\eqref{eqn:fullSDP}.
In what follows, we obtain a dual certificate to this effect by appealing to Lemma~\ref{lem.dual}.

\begin{theorem} 
\label{thm:sdp_cert}
    Suppose there exists a $W \in \mathbb{R}^{nd \times nd}$ such that
\begin{equation}\label{eqn:sdp_cert}
\begin{gathered}
    W \succeq xx^\T,
    \qquad
    I_d \in \argmax_{g \in G} \langle g, W_{[ij]} \rangle_F
    \quad \text{for all } i<j,
    \\
    \langle \1\1^\T \otimes I_d, W\rangle_F
    = \langle \1\1^\T \otimes I_d, xx^\T\rangle_F.
\end{gathered}
\end{equation}
    Then the $G$-orbit of the mean of $\{x_i\}_{i=1}^n\subset\mathbb{R}^d$ is a Fr\'echet mean of $\{[x_i]\}_{i=1}^n \subset \mathbb{R}^d/G$.
\end{theorem}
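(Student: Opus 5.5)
The plan is weak duality: the hypothesized $W$ is a dual-feasible point for \eqref{eqn:sdp_dual} whose dual value matches the primal value of $Y_0 \coloneqq \1\1^\T\otimes I_d$. That forces $Y_0$ to be optimal for \eqref{eqn:fullSDP}, and since $Y_0$ is feasible for the unrelaxed program \eqref{eq.reformulated frechet}, it also solves \eqref{eqn:block}. Theorem~\ref{theorem:rewriting} then turns this into the claim about the Fr\'echet mean.

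Concretely, take any $Y$ feasible for \eqref{eqn:fullSDP}. Since $Y\succeq0$ and $W-xx^\T\succeq0$, we get $\langle xx^\T,Y\rangle_F\le\langle W,Y\rangle_F$. The next step is to bound $\langle W,Y\rangle_F$ blockwise. Writing $Y_{[ij]}=\sum_{g}\lambda^{ij}_g g$ as a convex combination, the hypothesis $I_d\in\argmax_g\langle g,W_{[ij]}\rangle_F$ gives
\[
\langle W_{[ij]},Y_{[ij]}\rangle_F\le\langle W_{[ij]},I_d\rangle_F \quad\text{for } i<j .
\]
The same bound holds for $i>j$: symmetry of $Y$ and $W$ (we may symmetrize $W$, since $xx^\T$ and $Y$ are symmetric, and this does not change the pairing) gives $Y_{[ji]}=Y_{[ij]}^\T$ and $W_{[ji]}=W_{[ij]}^\T$, so $\langle W_{[ji]},Y_{[ji]}\rangle_F=\langle W_{[ij]},Y_{[ij]}\rangle_F$. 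On the diagonal, $Y_{[ii]}=I_d$ holds exactly. Summing over all blocks,
\[
\langle W,Y\rangle_F\le\langle W,Y_0\rangle_F=\langle xx^\T,Y_0\rangle_F ,
\]
where the equality is the third hypothesis in \eqref{eqn:sdp_cert}. Hence $Y_0$ maximizes \eqref{eqn:fullSDP}.

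Any $(g_1,\dots,g_n)\in G^n$ produces a feasible $Y$ with $Y_{[ij]}=g_i^\T g_j$, and for such $Y$ we have $\langle xx^\T,Y\rangle_F=\sum_{i,j}\langle x_ix_j^\T,g_i^\T g_j\rangle_F$. So $g_1=\cdots=g_n=I$ maximizes \eqref{eqn:block}. By Theorem~\ref{theorem:rewriting}, or directly by the argument in Theorem~\ref{thm:frechet_aligned} combined with Theorem~\ref{thm:euc_mean}, the orbit of $\frac1n\sum_i x_i$ is a Fr\'echet mean.

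The only delicate point is the symmetrization of $W$ just mentioned: replacing $W$ by $(W+W^\T)/2$ preserves all three conditions. Alternatively, one can simply apply the argmax hypothesis to the transposed blocks, using that $G$ is closed under transposition. Everything else is routine, and this avoids needing Lemma~\ref{lem.dual} in full.
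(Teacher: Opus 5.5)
Your proposal is correct and follows the paper's proof: $W$ is dual feasible, the second condition pins the dual value at $\langle \1\1^\T\otimes I_d, W\rangle_F$, and weak duality forces $Y_0$ to be optimal. The only difference is that you write out the weak-duality inequality, which the paper records in Appendix~\ref{sec:dual}, instead of citing Lemma~\ref{lem.dual}. One small correction: replacing $W$ by $(W+W^\T)/2$ need not preserve the second condition, since that condition constrains only the blocks with $i<j$. This is moot, because $W\succeq xx^\T$ in the Loewner order already makes $W$ symmetric, and for non-symmetric $W$ the statement would in fact fail (add to $xx^\T$ an antisymmetric matrix with blocks $cI_d$ above the diagonal and $-cI_d$ below it, with $c$ large).
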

\begin{proof}
Suppose there exists such a $W$.
The first condition in~\eqref{eqn:sdp_cert} makes $W$ dual feasible. 
Since every $H\in\mathcal H$ has diagonal blocks $I_d$,
the second condition determines the dual value at $W$:
\[
\max_{H \in \mathcal{H}} \, \langle H, W\rangle_F 
= \langle \1 \1^\T  \otimes I_d, W\rangle_F.
\]
The third condition in~\eqref{eqn:sdp_cert} 
gives that
the primal value at $Y \coloneqq{} \1\1^\T  \otimes I_d$ equals the dual value at $W$. 
By weak duality, 
it follows that $Y$ is primal optimal.
Since~\eqref{eqn:fullSDP} is a relaxation of~\eqref{eqn:block}, we conclude that $g_1=\cdots=g_n=I_d$ solves the latter problem, and the result follows.
\end{proof}

When $\{x_i\}_{i=1}^n$ is aligned, one may take $W=xx^\T$ in Theorem~\ref{thm:sdp_cert}.
Indeed, the first and third conditions of~\eqref{eqn:sdp_cert} are immediate, while the second condition concerns the quantity
\[
\langle g,W_{[ij]}\rangle_F
=\langle g,x_ix_j^\T\rangle_F
=\langle x_i,gx_j\rangle,
\]
which, for aligned data, is maximized by $g=I_d$.
When $\{x_i\}_{i=1}^n$ is not aligned, then $W=xx^\T$ does not satisfy the second condition of~\eqref{eqn:sdp_cert}, but sometimes, an appropriate perturbation does.
We take this approach to prove the following weaker guarantee.

\begin{corollary}
\label{cor:alignment_L}
Fix any finite nontrivial $G\leq O(d)$ and $\{x_i\}_{i=1}^n\subset\mathbb{R}^d$.
Define the signed adjacency matrix $A \in \mathbb{R}^{n \times n}$ by 
\[
A_{ij} 
= \min_{g \in G\setminus\{I_d\}}\frac{\lr{x_i}{x_j} - \lr{x_i}{g x_j}}{d - \Tr{g}},
\]
and let $L$ denote the corresponding graph Laplacian $L = \diag{(A\1)} - A$. 
If $L \succeq 0$, then the $G$-orbit of the mean of $\{x_i\}_{i=1}^n\subset\mathbb{R}^d$ is a Fr\'echet mean of $\{[x_i]\}_{i=1}^n \subset \mathbb{R}^d/G$. 
\end{corollary}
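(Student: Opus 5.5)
We will apply Theorem~\ref{thm:sdp_cert} with the perturbation
\[
W \coloneqq xx^\T + L\otimes I_d .
\]
Note that $L$ is the Laplacian of the signed graph with weights $A_{ij}$, so $L = \diag(A\1) - A$. This choice has three convenient properties. Since $L\succeq 0$ by hypothesis and $I_d\succeq 0$, the Kronecker product satisfies $L\otimes I_d\succeq0$, which gives $W\succeq xx^\T$. Since $L\1=0$, we have $\langle \1\1^\T\otimes I_d, L\otimes I_d\rangle_F = (\1^\T L\1)\,d = 0$, which gives the third condition in~\eqref{eqn:sdp_cert}. So the whole burden falls on the second condition.

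For the second condition, fix $i<j$. The off-diagonal block is $W_{[ij]} = x_ix_j^\T - A_{ij} I_d$, because $L_{ij}=-A_{ij}$. For any $g\in G$,
\[
\langle g, W_{[ij]}\rangle_F = \langle x_i, g x_j\rangle - A_{ij}\Tr g .
\]
We must show this is maximized at $g=I_d$, i.e.\ that for every $g\neq I_d$,
\[
\langle x_i,x_j\rangle - A_{ij} d \;\geq\; \langle x_i,gx_j\rangle - A_{ij}\Tr g .
\]
Equivalently, we need $\langle x_i,x_j\rangle-\langle x_i,gx_j\rangle \geq A_{ij}(d-\Tr g)$.

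To get this, note that $g\in O(d)$ with $g\neq I_d$ has $\Tr g<d$. Indeed, the eigenvalues of $g$ have modulus one, so $\Tr g = \sum \operatorname{Re}\lambda\le d$, with equality only if all eigenvalues equal $1$. Since $g$ is orthogonal and hence diagonalizable, that would force $g=I_d$. Thus $d-\Tr g>0$, and dividing through reduces the inequality to
\[
A_{ij}\le \frac{\langle x_i,x_j\rangle-\langle x_i,gx_j\rangle}{d-\Tr g} .
\]
This holds by the very definition of $A_{ij}$ as the minimum over $g\neq I_d$. Here we use that $G$ is nontrivial, so the minimum is over a nonempty set, and finite, so the minimum exists.

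We also need to check symmetry. The block $W_{[ji]}=W_{[ij]}^\T$ must be consistent, which requires $A$ to be symmetric. This follows because
\[
\langle x_j, g x_i\rangle = \langle x_i, g^{-1}x_j\rangle \quad\text{and}\quad \Tr g^{-1}=\Tr g ,
\]
and $g\mapsto g^{-1}$ permutes $G\setminus\{I_d\}$, so the minimum defining $A_{ji}$ equals the one defining $A_{ij}$. Theorem~\ref{thm:sdp_cert} then concludes the result.

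The main obstacle is only the second condition, specifically ensuring $d-\Tr g>0$; this is resolved by the eigenvalue argument above.
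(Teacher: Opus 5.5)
Your proof is correct and follows the paper's argument: you take $W = xx^\T + L\otimes I_d$ in Theorem~\ref{thm:sdp_cert}, obtain the first and third conditions from $L\succeq 0$ and $L\1 = 0$, and reduce the second condition to the defining inequality for $A_{ij}$. You also check that $d - \Tr g > 0$ for $g \neq I_d$ and that $A$ is symmetric; the paper leaves both details implicit, and you handle both correctly.
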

\begin{proof}
By Theorem~\ref{thm:sdp_cert}, it suffices to construct $W$ satisfying~\eqref{eqn:sdp_cert}. Define 
\[
W \coloneqq{} \left(L \otimes I_d\right) + xx^\T .
\]
The first condition in~\eqref{eqn:sdp_cert} follows from our assumption that $L\succeq0$.
For the second condition, fix $i<j$
and $g\in G\setminus\{I_d\}$.
Then
\begin{align*}
\langle I_d,W_{[ij]}\rangle_F
-\langle g, W_{[ij]}\rangle_F
&=\langle I_d-g,x_ix_j^\T-A_{ij}I_d\rangle_F\\
&=\langle x_i,x_j\rangle-\langle x_i,gx_j\rangle-A_{ij}(d-\operatorname{Tr}g)
\geq0,
\end{align*}
where the inequality follows from the definition of $A$.
This establishes the second condition in~\eqref{eqn:sdp_cert}.
Finally, the third condition follows from the fact that $\1\in\ker{L}$:
\[
\langle W-xx^\T , \1\1^\T  \otimes I_d \rangle_F
=\langle L\otimes I_d,\1\1^\T  \otimes I_d \rangle_F
=\langle L,\1\1^\T\rangle_F \langle  I_d, I_d \rangle_F
=0.
\]
Overall, $W$ satisfies~\eqref{eqn:sdp_cert}, and so we are done.
\end{proof}

Corollary~\ref{cor:alignment_L} gives a closed-form spectral test to certify that the orbit of the Euclidean mean is a Fr\'echet mean; this is inspired by the \textit{probably certifiably correct} algorithms introduced in~\cite{bandeira2016note}.
The underlying signed adjacency matrix is also interpretable to some extent.
Indeed, since $g$ is an isometry, we have
\[
\langle x_i,x_j\rangle - \langle x_i,g x_j\rangle
= \frac{1}{2} \big( \|x_i - gx_j\|^2 - \|x_i - x_j\|^2 \big),
\]
and so $A_{ij}$ is nonnegative precisely when $x_i$ and $x_j$ are aligned.
More generally, we can think of $A_{ij}$ as quantifying the alignment between $x_i$ and $x_j$.
If $\{x_i\}_{i=1}^n$ is aligned, then $A$ is entrywise nonnegative, and so $L\succeq0$.
Meanwhile, this positive semidefiniteness will survive a small amount of pairwise misalignment.
See Figure~\ref{fig:pm_certifiable_example} for an illustration of this phenomenon and Section~\ref{sec.case study} for a theoretical treatment.

\begin{figure}
\centering
\includegraphics[width=\linewidth]{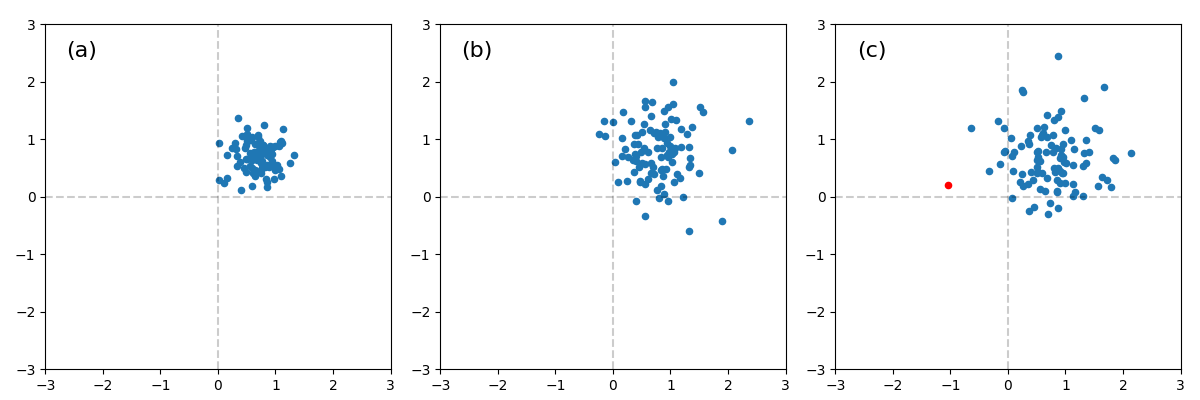}
\caption{Three random draws of 100 Gaussian data points in $\mathbb{R}^2$. 
In (a), the data is aligned with respect to $G=\pm$. 
In (b), the data is not aligned, but the spectral test in Corollary~\ref{cor:alignment_L} certifies that the Fr\'echet mean still coincides with the Euclidean mean. 
In (c), the Fr\'echet mean in $\mathbb{R}^2/\pm$ does not coincide with the Euclidean mean, as the red point must first be negated.
See Section~\ref{sec.case study} for a theoretical treatment of this phenomenon.}
\label{fig:pm_certifiable_example}
\end{figure}

\subsection{Relaxations for Sign Invariance}
\label{sec:pm}

In the special case where $G=\pm$, the Fr\'echet objective~\eqref{eqn:block} can be reformulated as
\begin{equation}
\label{eqn:frechet_pm}
\text{maximize}
\qquad
\langle X^\T X,\epsilon\epsilon^\T\rangle_F
\qquad
\text{subject to}
\qquad
\epsilon_1,\ldots,\epsilon_n\in\{\pm1\},
\end{equation}
where the $i$th column of $X \in \mathbb{R}^{d \times n}$ is $x_i$.
This formulation specializes~\eqref{eq.reformulated frechet} by exploiting the fact that $G$ acts by scalars.\footnote{Indeed, viewing $x_i$ as a member of $\mathbb{R}^{1\times d}$ with $G=O(1)$ acting on the left, then $x=[x_1;\cdots;x_n]=X^\T$ and $xx^T =X^TX$.}

We state our results for the finite group $G=\pm$, although the same scalar formulation has a complex analog with $G=S^1$ acting by scalar multiplication on $V=\mathbb{C}^d$.
For normalized landmark data, this complex Fr\'echet mean problem is a special case of Generalized Procrustes Analysis (GPA)~\cite{gower1975generalized}. 
The connection between Procrustes alignment and Fr\'echet means has been noted previously~\cite{huckemann2012meaning} and has been approached using spectral and semidefinite methods in~\cite{goodall1991procrustes, pizarro2011global}.
These problems also resemble \textit{angular synchronization}~\cite{singerAngularSynchronizationEigenvectors2009} and \textit{$O(1)$-synchronization}~\cite{abbe2014decoding}, which can be approached similarly.
With this in mind, we develop relaxations of~\eqref{eqn:frechet_pm}.

\subsubsection{Spectral Relaxation for Sign Invariance}
\label{sec:pm_spec}

We obtain a spectral relaxation by changing the constraint in~\eqref{eqn:frechet_pm} to $\|\epsilon\|^2=n$.
The solution to this relaxation is then the top eigenvector of the Gram matrix $X^\T X$.
Rounding this top eigenvector~$v$ to a feasible point of~\eqref{eqn:frechet_pm} then gives the approximate solution $\epsilon_i = \sign{v_i}$.

\subsubsection{Semidefinite Relaxation for Sign Invariance}
\label{sec:pm_sdp}

Writing $Z=\epsilon\epsilon^\T$, 
the feasibility region of~\eqref{eqn:frechet_pm} consists of all $Z\in\mathbb{R}^{n\times n}$ such that
\[
\operatorname{diag}Z=\1,
\qquad
Z\succeq0,
\qquad
\operatorname{rank}Z=1.
\]
By dropping the rank-$1$ constraint, we obtain the following:

\medskip
\noindent\textbf{Sign-Invariant Fr\'echet Mean SDP:}
\begin{equation}
\label{eqn:pm_sdp}
\text{maximize}
\qquad
\langle X^\T X,Z\rangle_F
\qquad
\text{subject to}
\qquad
\operatorname{diag}Z=\1,
\qquad
Z\succeq0.    
\end{equation}
When the solution to~\eqref{eqn:pm_sdp} has rank~$1$, it also solves the original problem~\eqref{eqn:frechet_pm}. 
Otherwise, we can round to the feasibility region of~\eqref{eqn:frechet_pm} 
by taking the entrywise sign of the top eigenvector of the SDP solution.

When $Z = \1\1^\T $ is optimal for \eqref{eqn:pm_sdp}, the Euclidean mean determines a Fr\'echet mean.
Similar to Theorem~\ref{thm:sdp_cert}, one may use duality to characterize this situation:

\begin{proposition}[Lemma~4.4 in~\cite{bandeiraTightnessMaximumLikelihood2017}]
\label{prop:sdp_aligned}
The matrix $Z=\1\1^\T$ optimizes~\eqref{eqn:pm_sdp} if and only if $\diag{(X^\T X\1)} - X^\T X \succeq 0$.
\end{proposition}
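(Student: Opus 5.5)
Write $C \coloneqq X^\T X$ and $L_C \coloneqq \diag(C\1) - C$. The plan is to use standard SDP duality for the MaxCut-type program~\eqref{eqn:pm_sdp}. The dual is
\[
\text{minimize } \1^\T y \quad\text{subject to}\quad \diag(y) - C \succeq 0 .
\]
Weak duality holds: for primal-feasible $Z$ and dual-feasible $y$,
\[
\langle C, Z\rangle_F \le \langle \diag(y), Z\rangle_F = \1^\T y .
\]
Strong duality also holds with attainment on both sides. The primal has the strictly feasible point $Z = I$, and the dual has a strictly feasible point $y = (\lambda_{\max}(C)+1)\1$. Slater's condition therefore applies. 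Moreover, the primal feasible set is compact, since $|Z_{ij}|\le 1$.

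For the direction ($\Leftarrow$), suppose $L_C \succeq 0$ and take $y = C\1$. This is dual feasible because $\diag(y) - C = L_C$. Its dual value is $\1^\T C \1$, which equals the primal value at $Z = \1\1^\T$. By weak duality, $\1\1^\T$ is optimal. This mirrors the argument of Theorem~\ref{thm:sdp_cert}, and more specifically Corollary~\ref{cor:alignment_L} with $d=1$.

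For the direction ($\Rightarrow$), suppose $Z = \1\1^\T$ is optimal. Strong duality with attainment yields a dual optimal $y$ with $S \coloneqq \diag(y) - C \succeq 0$ and $\1^\T y = \1^\T C\1$. Complementary slackness then reads
\[
\langle S, \1\1^\T\rangle_F = \1^\T y - \1^\T C \1 = 0 .
\]
Since $S \succeq 0$, this forces $S\1 = 0$, i.e.\ $y - C\1 = 0$, so $y = C\1$. Hence $S = L_C \succeq 0$, as claimed.

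The main point needing care is strong duality with dual attainment in the ($\Rightarrow$) direction; Slater's condition on both sides handles it. Alternatively, I could avoid duality theory with a direct perturbation argument. Suppose some $v$ has $v^\T L_C v < 0$. Then the curve $Z_t = D_t^{-1}(\1\1^\T + t\, vv^\T) D_t^{-1}$, which renormalizes the diagonal back to $\1$, has a first-order increase in objective equal to $-t\, v^\T L_C v$. This contradicts optimality for small $t>0$. I would present the duality version and note this as a sanity check.
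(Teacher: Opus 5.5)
Your proposal is correct. The ($\Leftarrow$) half follows the paper's own route. The paper derives it from Corollary~\ref{cor:alignment_L}, whose certificate $W=L\otimes I_d+xx^\T$ reduces in the scalar formulation to $W=\diag(X^\T X\1)$, which is your dual point $y=C\1$.

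For ($\Rightarrow$) the paper gives no argument of its own and defers to Lemma~4.4 of~\cite{bandeiraTightnessMaximumLikelihood2017}. Your Slater-plus-complementary-slackness argument is the standard KKT proof of that lemma, and it is complete as written. Strict primal feasibility at $Z=I_n$, together with a finite primal value, already gives dual attainment. Then $\1^\T S\1=0$ with $S\succeq 0$ forces $S\1=0$, hence $y=C\1$ and $S=L_C$.

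Your argument also explains the paper's remark that ($\Rightarrow$) is special to sign invariance. Here every dual slack has the rigid form $\diag(y)-C$, so annihilating $\1$ pins it down to the Laplacian. In the general dual of Lemma~\ref{lem.dual}, by contrast, the slack $W-xx^\T$ is only required to be positive semidefinite (and, at optimality, to annihilate the columns of $\1\otimes I_d$). Nothing forces it into the form $L\otimes I_d$ of Corollary~\ref{cor:alignment_L}.

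Your perturbation curve $Z_t$ is also valid. With $D_t=\diag(\sqrt{1+tv_i^2})$, the derivative of the objective at $t=0$ is indeed $-v^\T L_C v$. This gives a duality-free proof of ($\Rightarrow$), arguably the more elementary of your two arguments.
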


Observe that the ($\Leftarrow$) direction follows from Corollary~\ref{cor:alignment_L} since $A=\frac{1}{d}X^\T X$, while the ($\Rightarrow$) direction is unique to the sign-invariant setting.

\subsection{Relaxations for Cyclic Actions}
\label{sec:cyclic}

In this section, we consider $G=C_k$ acting orthogonally on $\mathbb{R}^d$ and on the left of $\mathbb{R}^{d\times p}$.
For example, the group might act by cyclic permutations, as in shape analysis (see Section~\ref{sub.mnist}) or time series data (see Section~\ref{sec:applications}).
Sign-invariant data can also be viewed as a special case of this setting, and the following relaxations reduce to those previously discussed in Section~\ref{sec:pm}.

Letting $g$ denote (the representation of) a generator of $C_k$, diagonalizing over~$\mathbb{C}$ gives $g=UDU^*$ for some unitary matrix $U$ and some diagonal matrix $D$ whose diagonal entries $\{\omega_m\}_{m=1}^d$ are $k$th roots of unity.
For each $i\in\{1,\ldots,n\}$, we are given $x_i\in\mathbb{R}^{d\times p}$.
Transform the data as $z_i\coloneqq{}U^*x_i$.\footnote{When $G$ acts by cyclic permutations, we may take $U^*$ to be the discrete Fourier transform.}
Then given $g_1,\ldots,g_n\in C_k$, we write $g_i=UD^{k_i}U^*$ for some $k_i\in\{0,\ldots,k-1\}$, and the objective~\eqref{eqn:block} becomes
\[
\sum_{i,j=1}^n \langle x_i x_j^\T, g_i^\T g_j \rangle_F
=\sum_{i,j=1}^n \langle z_i z_j^*, D^{k_j-k_i} \rangle_F
=\sum_{i,j=1}^n \sum_{m=1}^d \overline{(z_iz_j^*)_{mm}} \, \omega_m^{k_j-k_i}.
\]
Writing $\mathcal{Z}_m
\coloneqq{}[(z_i z_j^*)_{mm}]_{i,j=1}^n$ for each $m\in\{1,\ldots,d\}$, then~\eqref{eqn:block} reduces to
\begin{equation}
\label{eqn:cyclic_frechet}
\text{maximize}
\quad
\sum_{m=1}^d
\big\langle
\mathcal{Z}_m,
[\omega_m^{k_j-k_i}]_{i,j=1}^n
\big\rangle_F
\quad
\text{subject to}
\quad
k_1,\ldots,k_n\in \mathbb{Z}/k\mathbb{Z}.
\end{equation}
Overall, the cyclic structure reduces our $nd \times nd$ matrix problem to an equivalent $n \times n \times d$ tensor problem.
The corresponding spectral and semidefinite relaxations also take special forms.

\subsubsection{Spectral Relaxation for Cyclic Actions}
\label{sec:cyclic_spectral_algo}

The form of~\eqref{eqn:cyclic_frechet} allows us to use ideas from \textit{multi-frequency phase synchronization}~\cite{gaoMultiFrequencyPhaseSynchronization}, where global alignment is obtained by first synchronizing slices $\mathcal{Z}_m$ of a common mode $\omega_m$, then combining the modes to estimate pairwise relative shifts, then finally synchronizing them globally:
\begin{enumerate}
\medskip
\item
For each mode $\omega\in\{ \omega_1, \dots, \omega_d\}$, let $\Omega_\omega$ denote the best rank-$1$ approximation to the positive semidefinite matrix
\[
\sum_{\substack{m\in\{1,\ldots,d\}\\\omega_m = \omega}} \mathcal{Z}_m,
\]
i.e., $\Omega_\omega=\lambda_\omega u_\omega u_\omega^*$, where $\lambda_\omega$ and $u_\omega$ are the top eigenvalue and eigenvector.
\medskip
\item 
For each $i,j\in\{1,\ldots,n\}$ with $i<j$, find $k_{ij}\in\{0,\ldots,k-1\}$ that maximizes
\[
\operatorname{Re}\bigg(
\sum_{\omega \in \{ \omega_1, \dots, \omega_d\}}  (\Omega_\omega)_{ij} \,\omega^{k_{ij} } \bigg).
\]
\medskip
\item 
Estimate $k_1, \dots, k_n$ by angular synchronization:
Define $H \in \mathbb{C}^{n \times n}$ by $H_{ii} =1$ and by $H_{ij} \coloneqq{} e^{2\pi\mathrm{i}k_{ij}/k}$ and $H_{ji}=\overline{H_{ij}}$ for $i < j$, compute the top eigenvector~$v$ of $H$, and set
\[
k_i 
\coloneqq{} \operatorname{round}\big( \tfrac{k}{2\pi} \arg(v_i) \big), 
\qquad 
i\in\{1,\ldots,n\}.
\]
\end{enumerate}
While the first step approximately solves 
\[
\Omega_\omega \approx \begin{bmatrix}
1 & \omega^{k_2 - k_1} & \cdots & \omega^{k_n - k_1} \\
\omega^{k_1 - k_2} & 1 & \cdots & \omega^{k_n - k_2} \\
\vdots & \vdots & \ddots & \vdots \\
\omega^{k_1 - k_n} & \omega^{k_2 - k_n} & \cdots & 1
\end{bmatrix},
\]
we do not round the entries of $\Omega_\omega$ to phases since this would discard magnitude information that benefits step~2.
In the special case where $p=1$ and no mode is repeated (e.g., when $k=d$ and $G$ acts by cyclic permutations), we note that each $\mathcal{Z}_m$ already has rank at most $1$, and so the first step is unnecessary.

\subsubsection{Semidefinite Relaxation for Cyclic Actions}
\label{subsub.sdp cyclic}

Our slices of data form a tensor $\mathcal{Z}\in\mathbb{C}^{n\times n\times d}$ defined by $\mathcal{Z}_{i,j,m}=(\mathcal{Z}_m)_{i,j}$, and the relevant modes are given by the vectors
\[
\chi_t
\coloneqq{}\operatorname{diag}(D^t )
=(\omega_1^t ,\ldots,\omega_d^t ),
\qquad
t\in\{0,\ldots,k-1\}.
\]
This notation yields an equivalent form of the SDP~\eqref{eqn:fullSDP} for cyclic actions:

\medskip
\noindent\textbf{Cyclic Action Fr\'echet Mean SDP:}
\begin{equation}
\label{eqn:cyclic_SDP}
\begin{split}
\text{maximize}
\qquad
&\operatorname{Re}\langle \mathcal{Z},\Omega\rangle\\
\text{subject to} \qquad 
    & \Omega_{i,i,\ell} = 1 \text{ for all } i,\ell,\\
    & \Omega_{i,j,:} \in \conv\{\chi_t\}_{t=0}^{k-1} \text{ for all }i,j, \\
    & {\Omega}_{:,:,\ell} \succeq 0  \text{ for all } \ell.
\end{split}
\end{equation}
This form replaces an $nd \times nd$ semidefinite constraint with $d$ constraints of size $n \times n$, potentially reducing some computational burden.
\section{Tightness of the Sign-Invariant Fr\'echet Mean SDP}
\label{sec.case study}

In this section, we compare alignment with Proposition~\ref{prop:sdp_aligned} for datasets in $\mathbb{R}^d/\pm$.
We call a dataset \emph{SDP aligned} if it satisfies the condition in Proposition~\ref{prop:sdp_aligned}.
Alignment implies SDP alignment, which in turn guarantees that the orbit of the Euclidean mean is a Fr\'echet mean.
The following proposition compares these conditions under a random data model: the noise threshold for alignment decreases as the ambient dimension $d$ grows, while the threshold for SDP alignment remains constant (up to a log factor).
Thus, alignment exhibits a curse of dimensionality that the SDP avoids.

\begin{figure}[t]
\centering
\begin{tikzpicture}[
  tick label/.style={anchor=base, inner sep=3pt},
  pointer/.style={-{Stealth[length=4pt,width=3pt]}, thin}
]
  \def\a{-0.4}
  \def\b{0.4}
  \def\c{4.1}
  \def\h{0.65}

  \fill[green!75!black, fill opacity=0.25]
    (-5,0) rectangle (\a,\h);
  \fill[red, fill opacity=0.25]
    (\b,0) rectangle (5,\h);
  \fill[blue, fill opacity=0.25]
    (-5,-\h) rectangle (\c,0);

  \node[text=green!40!black] at (-3.5,{\h/2}) {\phantom{SDP }aligned};
  \node[text=red!65!black] at ({(\b+5)/2},{\h/2}) {not aligned};
  \node[text=blue!55!black] at (-3.5,{-\h/2}) {SDP aligned};

  \draw[{Stealth}-{Stealth}, thick] (-5.2,0) -- (5.2,0)
    node[right=3pt] {$\sigma^2$};
  \foreach \x in {\a,\b,\c} {
    \draw[thick] (\x,-0.11) -- (\x,0.11);
  }

  \coordinate (figureCenter) at (current bounding box.center);
  \matrix[anchor=base, inner sep=0pt, outer sep=0pt,
          column sep=1.65cm, xshift=-1.9pt]
    at (figureCenter |- 0,-1.55) {
    \node[tick label] (labelA)
      {$\displaystyle\frac{c}{\sqrt{d\log(n^2/\epsilon)}}$}; &
    \node[tick label] (labelB)
      {$\displaystyle\frac{C}{\sqrt{d}}$}; &
    \node[tick label] (labelC)
      {$\displaystyle\frac{c}{\log(n^2/\epsilon)}$}; \\
  };

  \draw[pointer] (labelA.north) to[out=90,in=-130,looseness=0.7] (\a,-0.14);
  \draw[pointer] (labelB.north) to[out=90,in=-50] (\b,-0.14);
  \draw[pointer] (labelC.north) to[out=90,in=-130] (\c,-0.14);
\end{tikzpicture}
\caption{Illustration of Proposition~\ref{prop.bounds in figure of merit} in the usual setting where $d\gg \log^2(n^2/\epsilon)$. This establishes the strength of the sign-invariant Fr\'echet mean SDP~\eqref{eqn:pm_sdp} over the notion of alignment introduced in Section~\ref{sec.alignment}.}
\label{fig:align_comp}
\end{figure}

\begin{proposition}
\label{prop.bounds in figure of merit}
There exist universal constants $C,c>0$ for which the following holds:
For every $\epsilon\in(0,1/2)$ and $d,n\in\mathbb{N}$ satisfying
\[
C\log(n^2/\epsilon)
\leq d
\leq n,
\]
if we fix $x\in S^{d-1}$ and draw iid random vectors $\{x_i\}_{i=1}^n$ from $N(x,\sigma^2I_d)$,
then
\[
\begin{aligned}
\sigma^2&\leq\frac{c}{\sqrt{d\log(n^2/\epsilon)}}
&\qquad\Longrightarrow\qquad
&&\mathbb{P}(\{x_i\}_{i=1}^n\text{ aligned})
&>1-\epsilon,\\
\sigma^2
&\geq\frac{C}{\sqrt{d}}
&\qquad\Longrightarrow\qquad
&&\mathbb{P}(\{x_i\}_{i=1}^n\text{ not aligned})
&>1-\epsilon,\\[0.5em]
\sigma^2
&\leq\frac{c}{\log(n^2/\epsilon)}
&\qquad\Longrightarrow\qquad
&&\mathbb{P}(\{x_i\}_{i=1}^n\text{ SDP aligned})
&>1-\epsilon.
\end{aligned}
\]
\end{proposition}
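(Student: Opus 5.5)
Write $x_i = x + \sigma z_i$ with $z_i\sim N(0,I_d)$ iid. For $G=\pm$, the set $\{x_i\}$ is aligned exactly when $\langle x_i,x_j\rangle\ge 0$ for all $i\ne j$. We expand
\[
\langle x_i,x_j\rangle = 1+\sigma\langle x,z_i+z_j\rangle+\sigma^2\langle z_i,z_j\rangle .
\]
Here $\langle x,z_i\rangle\sim N(0,1)$. The product $\langle z_i,z_j\rangle$ is a sum of $d$ iid products of Gaussians; it is subexponential and has fluctuations of order $\sqrt d$. All three claims reduce to concentration estimates on these quantities, combined with a union bound over at most $n^2$ pairs or rows.

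For the first claim, Bernstein's inequality gives $|\langle z_i,z_j\rangle|\lesssim \sqrt{d\log(n^2/\epsilon)}+\log(n^2/\epsilon)\lesssim\sqrt{d\log(n^2/\epsilon)}$ simultaneously for all pairs, with probability $1-\epsilon/2$. The second bound uses $d\ge C\log(n^2/\epsilon)$. Similarly, $|\langle x,z_i\rangle|\lesssim\sqrt{\log(n/\epsilon)}$ for all $i$. Under $\sigma^2\le c/\sqrt{d\log(n^2/\epsilon)}$ the quadratic term is then at most a small constant. For the linear term, $\sigma\sqrt{\log}\le \sqrt c\,(\log)^{1/4}d^{-1/4}$, which is also small because $d\ge C\log$. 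Hence $\langle x_i,x_j\rangle>0$ for all pairs.

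For the second claim, I will split the $n$ points into $\lfloor n/2\rfloor$ disjoint pairs, which makes the events independent. It then suffices to show that a single pair satisfies $\langle x_i,x_j\rangle<0$ with probability at least some constant $p_0$, since $(1-p_0)^{n/2}\le\epsilon$ follows from $n\ge d\ge C\log(n^2/\epsilon)$. Conditioned on $z_j$, the product $\sigma^2\langle z_i,z_j\rangle$ is Gaussian with standard deviation $\sigma^2\|z_j\|\approx\sigma^2\sqrt d\ge C$. Meanwhile, $1+\sigma\langle x,z_i+z_j\rangle$ is of order $1+\sigma$, and $\sigma\le\sigma^2\sqrt d$ whenever $\sigma\ge 1/\sqrt d$. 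In the remaining case, $\sigma$ is at most a constant. So with constant probability the quadratic term is below $-2(1+\sigma)$ while the linear term stays bounded. Working with the orthogonal decomposition of $z_i$ along $x$ and along $z_j$ should make this rigorous. I expect a small Paley--Zygmund or anti-concentration calculation to be needed here.

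The third claim is the main obstacle. I will use Proposition~\ref{prop:sdp_aligned}: it suffices to show $\diag(X^\T X\1)-X^\T X\succeq0$. Write $X=x\1^\T+\sigma Z$. Then
\[
X^\T X=\1\1^\T+\sigma(\1 z^\T+z\1^\T)+\sigma^2Z^\T Z,\qquad z_k\coloneqq\langle x,z_k\rangle .
\]
The plan is to show that the diagonal is large and the off-diagonal part is small in operator norm on $\1^\perp$. Note that $\1$ itself lies in the kernel of the matrix, since it is a Laplacian. The diagonal entry $i$ equals $\sum_j\langle x_i,x_j\rangle = n+\sigma(nz_i+\sum_j z_j)+\sigma^2\langle z_i,\sum_j z_j\rangle$. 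With high probability this is at least $n/2$ provided $\sigma\sqrt{\log(n/\epsilon)}\le c'$ and $\sigma^2|\langle z_i,\sum_j z_j\rangle|\le n/4$. The latter holds because $\langle z_i,\sum_{j\ne i}z_j\rangle\lesssim\sqrt{dn\log}$ and $\|z_i\|^2\approx d\le n$. For $v\perp\1$ with $\|v\|=1$, the term $\1\1^\T$ contributes $0$, and the rank-one cross terms contribute $2\sigma\langle v,\1\rangle\langle v,z\rangle=0$. What remains is $\sigma^2\|Zv\|^2\le\sigma^2\|Z\|^2\lesssim\sigma^2(\sqrt n+\sqrt d)^2\lesssim\sigma^2 n$. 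Thus the quadratic form is at least $n/2-C'\sigma^2 n>0$ when $\sigma^2\le c$. Note that $c/\log(n^2/\epsilon)$ suffices here; the $\log$ is needed only for the diagonal union bound. The delicate points are keeping track of the diagonal fluctuations from the quadratic terms and checking that no off-diagonal term other than $\sigma^2 Z^\T Z$ survives on $\1^\perp$. I believe the decomposition above handles both.
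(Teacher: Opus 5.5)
Your Claims I and III essentially reproduce the paper's argument. In Claim I the paper also splits $\langle x_i,x_j\rangle$ into $1$, a Gaussian linear part, and the bilinear part $\langle x_i-x,x_j-x\rangle$. It controls the bilinear part by Hanson--Wright, which here amounts to your Bernstein bound, and then union-bounds over pairs. In Claim III the paper also restricts to unit $v\perp\1$ and lower-bounds $v^\T Lv$ by the smallest diagonal entry minus the squared operator norm of the centered noise matrix. It uses $d\le n$ to make that norm $O(\sigma\sqrt n)$. The only difference is bookkeeping: the paper writes the diagonal as $n\|\mu\|^2+n\langle r_i,\mu\rangle$ and uses independence of the sample mean from the residuals, while you expand in $x$ and the $z_k$ directly.

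Claim II is where you take a different route. The paper's key step is the symmetry $(z_i,z_j)\mapsto(-z_i,-z_j)$. It negates the linear part $W$ and fixes $Q\coloneqq\sigma^2\langle z_i,z_j\rangle$, so that
$\mathbb{P}(\langle x_i,x_j\rangle<0)\ge\mathbb{P}(W<0,\,Q<-1)=\tfrac14\mathbb{P}(Q^2>1)$.
Paley--Zygmund with $\mathbb{E}Q^2=d\sigma^4$ and $\mathbb{E}Q^4=3d(d+2)\sigma^8$ then gives the constant $1/144$. The symmetry decouples the two correlated terms at no cost.

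Your conditioning idea also works, and the cleanest way to finish it is as follows. Given $z_j$, $\langle x_i,x_j\rangle$ is exactly Gaussian with mean $1+\sigma\langle x,z_j\rangle$ and standard deviation $\|\sigma x+\sigma^2 z_j\|\ge\sigma^2\|z_j-\langle x,z_j\rangle x\|$. Consider the event that $\langle x,z_j\rangle\le0$ and $\|z_j-\langle x,z_j\rangle x\|\ge\sqrt d/2$. These two events are independent and jointly have probability at least $\tfrac12(1-e^{-\Omega(d)})$. On their intersection, the conditional probability that $\langle x_i,x_j\rangle<0$ is at least $\Phi(-2/C)$, where $\Phi$ is the standard normal CDF.

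Be careful with the version you actually sketched. Suppose you use only $\sigma\le\sigma^2\sqrt d$ and $\sigma^2\sqrt d\ge C$. Then the level $2(1+\sigma)$ can sit about two standard deviations into the tail of the quadratic term, which has probability near $0.02$. Meanwhile the event $\sigma\langle x,z_i+z_j\rangle>1+\sigma$ can have probability close to $\mathbb{P}(N(0,1)>1/\sqrt2)\approx0.24$. So a union bound over the two events gives nothing.

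The sketch does go through if you use the stronger consequence $\sigma\sqrt d\ge\sqrt C\,d^{1/4}$ of the hypothesis. It gives $(1+\sigma)/(\sigma^2\sqrt d)\le 1/C+1/\sqrt C$, which pushes the quadratic event's probability toward $1/2$. Also, your case $\sigma<1/\sqrt d$ never occurs under $\sigma^2\ge C/\sqrt d$.

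Compared with the paper, the conditioning argument avoids fourth moments and gives a constant near $1/4$ rather than $1/144$. The cost is slightly more setup than the paper's one-line symmetry.
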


This reveals a sizable gap between alignment and SDP alignment.
When
\[
    \frac{1}{\sqrt{d}}
    \lesssim \sigma^2
    \lesssim \frac{1}{\log(n^2/\epsilon)},
\]
it holds with high probability that
Gaussian points are SDP aligned but not aligned; see Figure~\ref{fig:align_comp} for an illustration.
Thus, for high-dimensional sign-invariant data, the SDP tolerates substantially more noise than alignment does.
Experiments in Section~\ref{sec:numerics} show a similar gap for other groups.

\begin{proof}[Proof of Proposition~\ref{prop.bounds in figure of merit}]
We prove each claimed implication separately.

\medskip
\noindent
\textbf{Claim I:}
Fix $i,j\in\{1,\ldots,n\}$ with $i\neq j$.
We start by upper bounding the probability that $x_i$ and $x_j$ are not aligned, i.e., $\langle x_i,x_j\rangle<0$.
If we denote $z_i{\coloneqq}x_i-x$, $z_j{\coloneqq}x_j-x$, $W{\coloneqq}\langle z_i,x\rangle+\langle x,z_j\rangle$ and $Z{\coloneqq}\langle z_i,z_j\rangle$,
then
\[
\p{\langle x_i,x_j\rangle<0}
=\p{1+W+Z<0}
\leq\p{W<-\tfrac{1}{2}}+\p{Z<-\tfrac{1}{2}}.
\]
Since $W\sim N(0,2\sigma^2)$, the Chernoff bound gives $\p{W<-\tfrac{1}{2}}\leq e^{-1/(16\sigma^2)}$.
Meanwhile, an application of the Hanson--Wright inequality gives
\[
\p{Z<-\tfrac{1}{2}}
\leq 2\operatorname{exp}\big(-c_1\min\{\tfrac{1}{d\sigma^4},\tfrac{1}{\sigma^2}\}\big),
\]
where $c_1>0$ is a universal constant.
We combine these estimates to obtain
\[
\p{\langle x_i,x_j\rangle<0}
\leq 3\operatorname{exp}\big(-c_2\min\{\tfrac{1}{d\sigma^4},\tfrac{1}{\sigma^2}\}\big),
\]
where $c_2>0$ is a universal constant.
An application of the union bound then gives
\[
\mathbb{P}(\{x_i\}_{i=1}^n\text{ aligned})
\geq 1-\tbinom{n}{2}\cdot 3\operatorname{exp}\big(-c_2\min\{\tfrac{1}{d\sigma^4},\tfrac{1}{\sigma^2}\}\big).
\]
Squaring and rearranging our hypothesis on $\sigma^2$ gives $\tfrac{1}{d\sigma^4}
\geq\tfrac{1}{c^2}\log(n^2/\epsilon)$.
Meanwhile, combining our hypothesis on $\sigma^2$ with the assumption $d\geq C\log(n^2/\epsilon)$ gives
\[
\frac{1}{\sigma^2}
\geq\frac{\sqrt{d\log(n^2/\epsilon)}}{c}
\geq\frac{\sqrt{C}}{c}\cdot\log(n^2/\epsilon).
\]
Taking $C$ sufficiently large and $c$ sufficiently small, it follows that our lower bound on the aligned probability is strictly greater than $1-\epsilon$, as desired.

\medskip
\noindent
\textbf{Claim II:}
Fix $i,j\in\{1,\ldots,n\}$ with $i\neq j$.
We start by lower bounding the probability that $x_i$ and $x_j$ are not aligned.
Recalling $z_i$, $z_j$, $W$, and $Z$ from the previous argument, note that simultaneously negating $z_i$ and $z_j$ negates $W$ and fixes $Z$.
It follows that
\[
\p{\langle x_i,x_j\rangle<0}
\geq \p{W<0,Z<-1}
= \tfrac{1}{2}\p{Z<-1}
= \tfrac{1}{4}\p{Z^2>1},
\]
where the last step uses the fact that the distribution of $Z$ is symmetric about zero.
We obtain a further lower bound using the Paley--Zygmund inequality.
First, we compute the relevant moments:
\begin{align*}
\mathbb{E}Z^2
&%
=\mathbb{E}\bigg[\sum_{k,l=1}^d(z_i)_k(z_j)_k(z_i)_l(z_j)_l\bigg]
=\sum_{k=1}^d\mathbb{E}[(z_i)_k^2]\,\mathbb{E}[(z_j)_k^2]
=d\sigma^4,\\
\mathbb{E}Z^4
&=\mathbb{E}\big[\mathbb{E}[Z^4|z_i]\big]
=\mathbb{E}\big[3\sigma^4\|z_i\|^4\big]
=3\sigma^4\cdot\mathbb{E}\bigg[\sum_{k,l=1}^d(z_i)_k^2(z_i)_l^2\bigg]
=3d(d+2)\sigma^8.
\end{align*}
Then we combine the Paley--Zygmund inequality with the fact that $d\geq 1$ to obtain
\[
\p{Z^2>\tfrac{1}{2}\mathbb{E}Z^2}
\geq\frac{(\mathbb{E}Z^2)^2}{4\mathbb{E}Z^4}
=\frac{d}{12(d+2)}
\geq\frac{1}{36}.
\]
Provided $C\geq\sqrt{2}$, our hypothesis on $\sigma^2$ gives $\frac{1}{2}\mathbb{E}Z^2 
= \frac{1}{2}d\sigma^4
\geq 1$, and so
\[
\p{\langle x_i,x_j\rangle<0}
\geq \tfrac{1}{4}\p{Z^2>1}
\geq \tfrac{1}{4}\p{Z^2>\tfrac{1}{2}\mathbb{E}Z^2}
\geq\tfrac{1}{144}.
\]
Now note that $\{x_i\}_{i=1}^n$ is  aligned only if $\langle x_i,x_{i+1}\rangle\geq0$ for each odd $i<n$.
These events are independent and have the same probability, and so
\[
\mathbb{P}(\{x_i\}_{i=1}^n\text{ aligned})
\leq \Big(1-\mathbb{P}(\langle x_1,x_2\rangle<0)\Big)^{\lfloor n/2\rfloor}
\leq \big(\tfrac{143}{144}\big)^{\lfloor n/2\rfloor}.
\]
Since $n\geq C\log(n^2/\epsilon)\geq C\log(1/\epsilon)$ by assumption, taking $C$ sufficiently large makes this upper bound strictly less than $\epsilon$, as desired.

\medskip
\noindent
\textbf{Claim III:}
Following Proposition~\ref{prop:sdp_aligned}, we put
$L
{\coloneqq}\operatorname{diag}(X^\T X\1)-X^\T X$,
where $X$ has columns $x_1,\ldots,x_n$. 
It suffices to show $L\succeq0$.
Define
\[
\mu{\coloneqq}\frac1n\sum_{i=1}^n x_i,
\qquad
r_i{\coloneqq}x_i-\mu,
\qquad
R{\coloneqq}\left[\begin{array}{ccc}r_1&\cdots&r_n\end{array}\right].
\]
Since $L\1=0$, we may focus on the unit vectors
$v$ orthogonal to $\1$. 
Since $X\1=n\mu$ and $Xv=Rv$ for every such vector, we have
\begin{align*}
v^\T Lv
=v^\T \operatorname{diag}(X^\T X\1)v-v^\T X^\T Xv\ 
&=\sum_{i=1}^n \langle x_i,n\mu\rangle v_i^2-\|Rv\|^2\ \\ 
&\geq \min_i\langle x_i,n\mu\rangle-\|R\|_2^2\ \\
&= n\Big(\|\mu\|^2+\min_i\langle r_i,\mu\rangle\Big)-\|R\|_2^2.
\end{align*}
We will show that the right-hand side is positive with high probability.

First, $\langle \mu,x\rangle\sim N(1,\sigma^2/n)$, and so the Chernoff bound gives
\[
\mathbb P(\|\mu\|<1/2)
\leq\mathbb P(\langle \mu,x\rangle<1/2)
\leq e^{-n/(8\sigma^2)}.
\]
Next, the sample mean $\mu$ and residual matrix $R$ are jointly Gaussian with zero covariance, and therefore independent.
Since $r_i\sim N(0,(1-1/n)\sigma^2I_d)$, it follows that, conditional on $\mu$, each $\langle r_i,\mu\rangle$ has distribution $N(0,(1-1/n)\sigma^2\|\mu\|^2)$.
We apply the union bound, the law of total probability, and the Chernoff bound to obtain
\[
\mathbb{P}\Big(
\min_i\langle r_i,\mu\rangle<-s\|\mu\|
\Big)
\leq n\cdot\mathbb{E}\bigg[\mathbb{P}\Big(
\langle r_1,\mu\rangle<-s\|\mu\|\,\Big|\,\mu
\Big)\bigg]
\leq n\cdot e^{-s^2/(2\sigma^2)}
\]
for every $s\geq0$.
Finally, write $E{\coloneqq}\left[\begin{array}{ccc}z_1&\cdots&z_n\end{array}\right]$, where again $z_i{\coloneqq}x_i-x$. 
Then
\[
\|R\|_{2}
=\|E(I_n-\tfrac{1}{n}\1\1^\T )\|_{2}
\leq \|E\|_{2},
\]
and so the standard Gaussian matrix norm bound gives
\[
\mathbb{P}\Big(
\|R\|_{2}>\sigma\big(\sqrt d+\sqrt n+t\big)
\Big)
\leq 2e^{-t^2/2}
\]
for every $t\geq0$.

Now take $s=1/4$ and $t=\sqrt n$. 
Since $d\le n$, the preceding
estimates imply that
\[
\|\mu\|
\geq \frac{1}{2},
\qquad
\min_i\langle r_i,\mu\rangle
\geq -\frac{1}{4}\|\mu\|,
\qquad
\|R\|_{2}
\leq 3\sigma\sqrt n
\]
hold simultaneously except on an event of probability at most
\[
e^{-n/(8\sigma^2)}
+n e^{-1/(32\sigma^2)}
+2e^{-n/2}
<\epsilon,
\]
where the last step applies our assumption $n\geq C\log(n^2/\epsilon)$ with $C$ sufficiently large and our hypothesis on $\sigma^2$ with $c$ sufficiently small.
Overall, with probability greater than $1-\epsilon$, it holds that every unit vector $v$ orthogonal to $\1$ satisfies
\[
v^\T Lv
\geq n\Big(\|\mu\|^2+\min_i\langle r_i,\mu\rangle\Big)-\|R\|_{2}^2
\geq n\Big(\tfrac{1}{8}-9\sigma^2\Big)
>0,
\]
where the last step holds provided $c$ is sufficiently small.
Thus, $L$ is positive semidefinite, as desired.
\end{proof}

\section{Numerical Results and Applications}
\label{sec:numerics}
In this section, we perform numerical experiments with Fr\'echet means. 
Section~\ref{sec:align_vs_cert} compares different ways to certify the correctness of a Fr\'echet mean,
Section~\ref{sec:acc_runtime} compares the efficacy of the numerical methods from Section~\ref{sec:relaxations}, and the remaining subsections apply these methods to various datasets.

\subsection{Gap Between Alignment and Fr\'echet Optimality}
\label{sec:align_vs_cert}

\begin{figure}[t]
    \centering

    \begin{subfigure}[t]{0.32\textwidth}
        \centering
        \includegraphics[width=\textwidth, trim=0 0 5cm 0, clip]{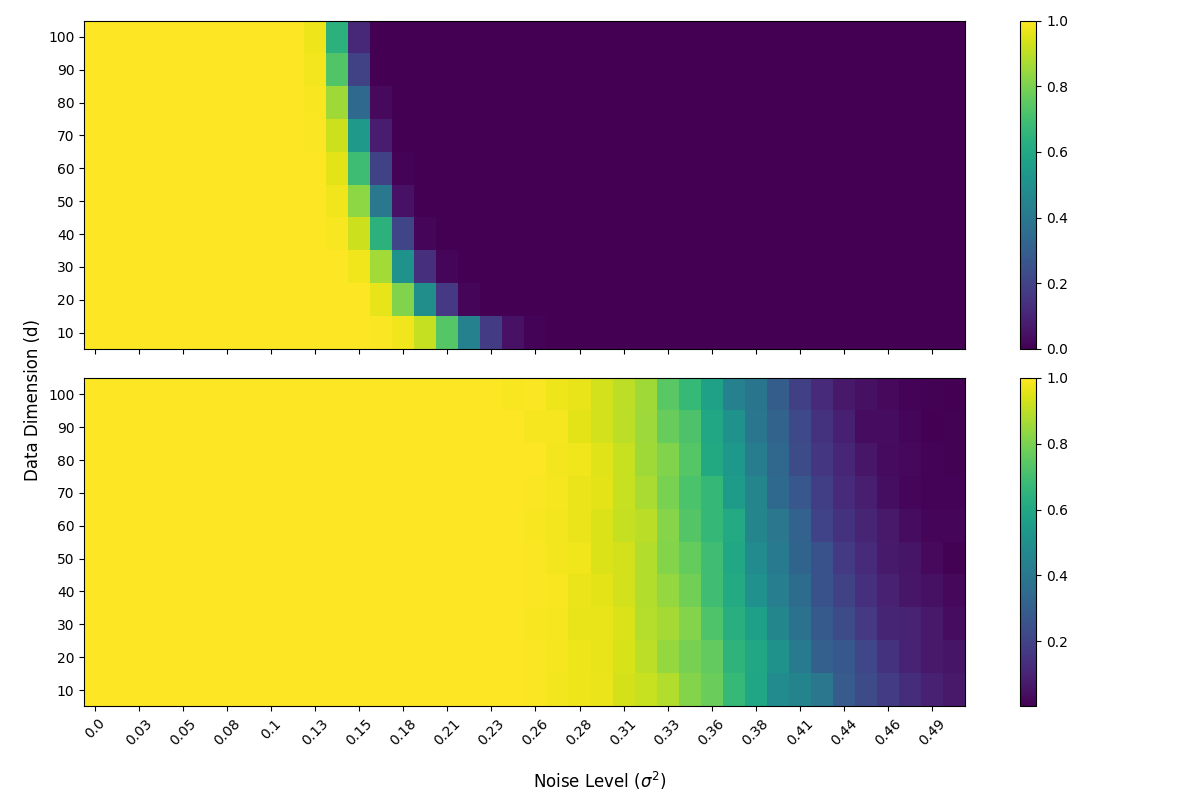}
        \caption{$G = C_2$}
        \label{fig:align_vs_cert:pm}
    \end{subfigure}
    \hfill
    \begin{subfigure}[t]{0.32\textwidth}
        \centering
        \includegraphics[width=\textwidth, trim=0 0 5cm 0, clip]{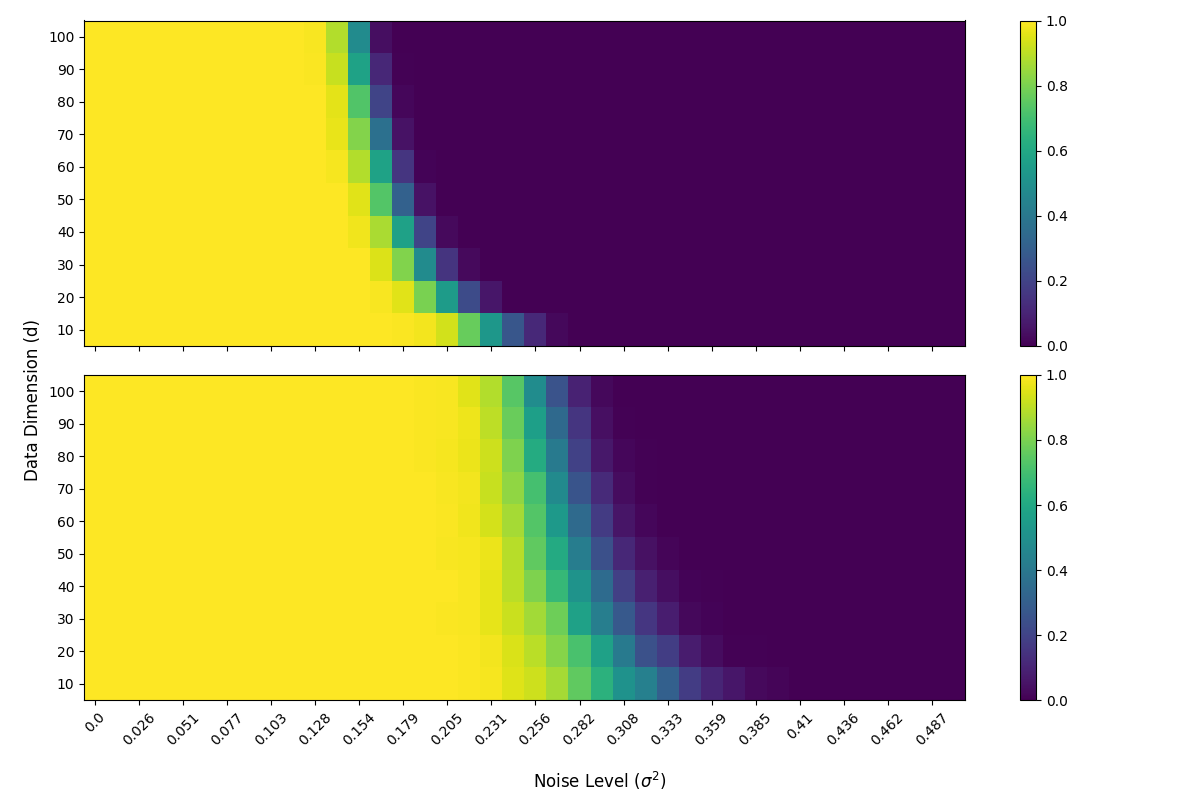}
        \caption{$G = C_3$}
    \end{subfigure}
    \hfill
    \begin{subfigure}[t]{0.32\textwidth}
        \centering
        \includegraphics[width=\textwidth, trim=0 0 5cm 0, clip]{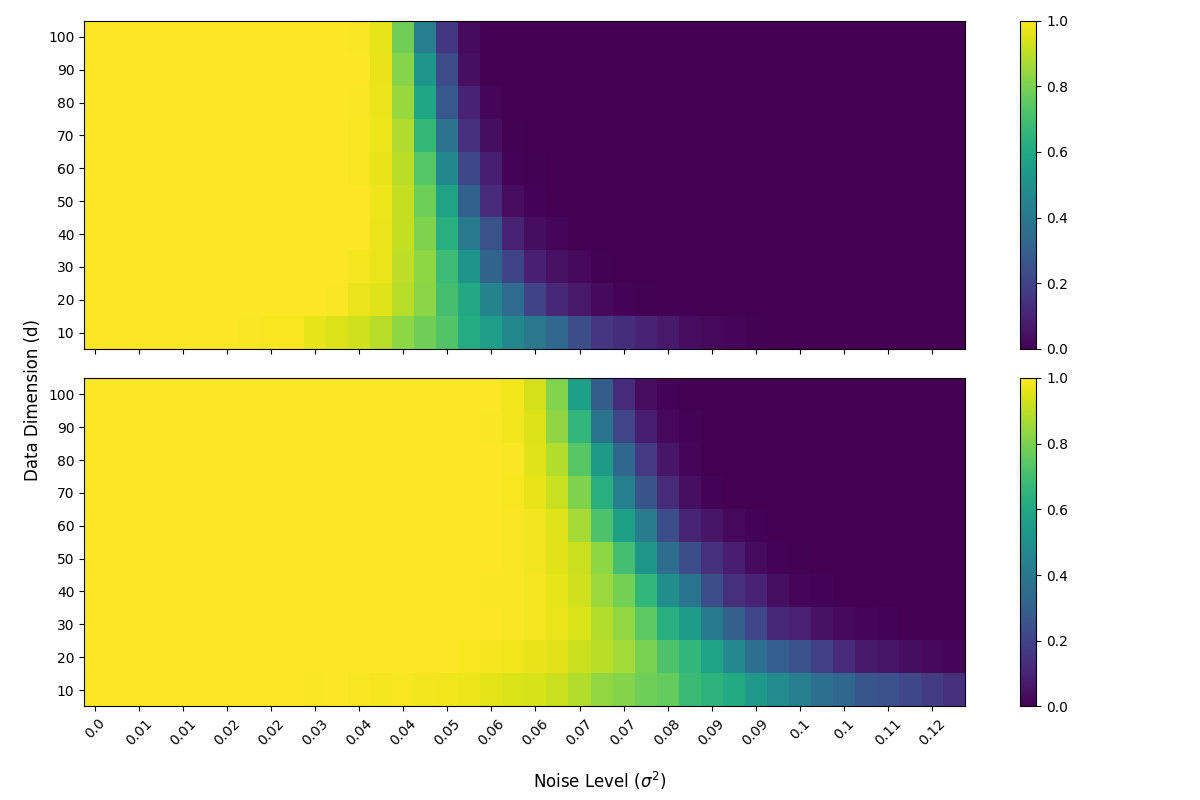}
        \caption{$G = C_d$}
    \end{subfigure}

    \vskip\baselineskip

    \begin{subfigure}[t]{0.32\textwidth}
        \centering
        \includegraphics[width=\textwidth, trim=0 0 5cm 0, clip]{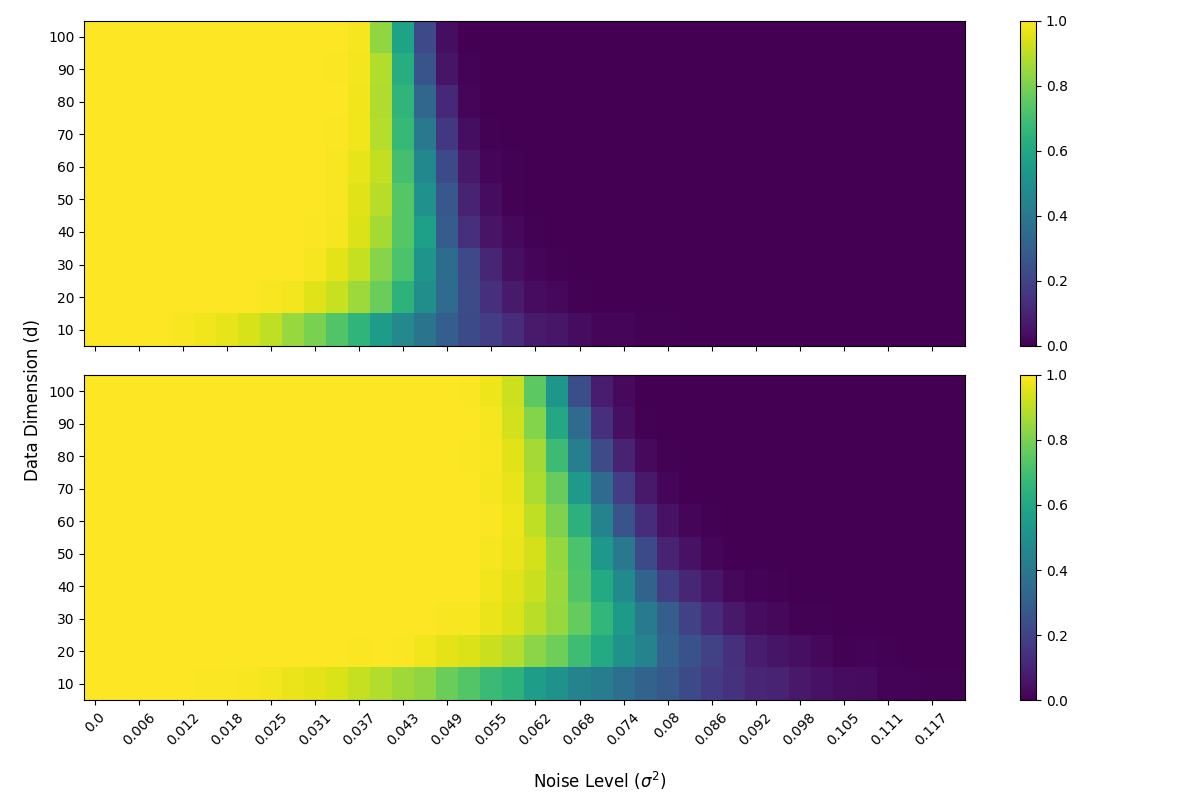}
        \caption{$G = D_d$}
    \end{subfigure}
    \hfill
    \begin{subfigure}[t]{0.32\textwidth}
        \centering
        \includegraphics[width=\textwidth, trim=0 0 5cm 0, clip]{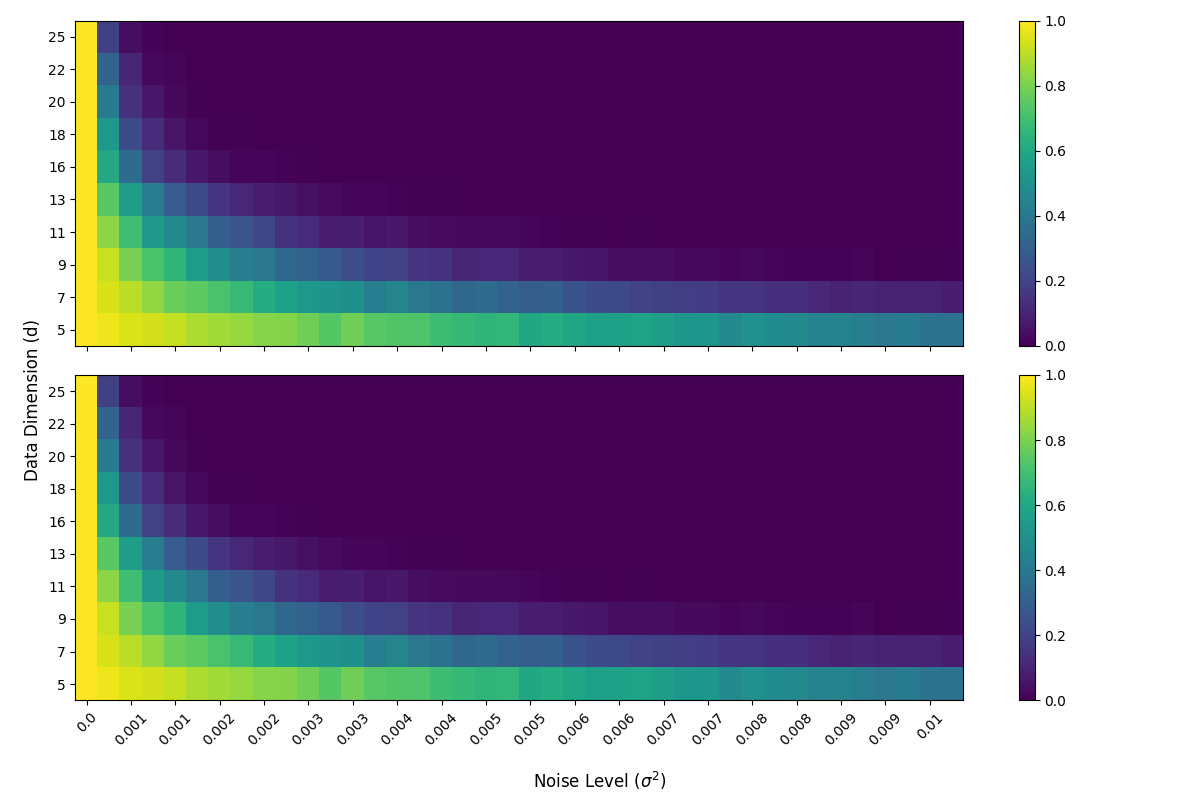}
        \caption{$G = S_d$}
        \label{fig:align_vs_cert:perm}
    \end{subfigure}
    \hfill
    \begin{subfigure}[t]{0.32\textwidth}
        \centering
        \includegraphics[width=\textwidth, trim=0 0 5cm 0, clip]{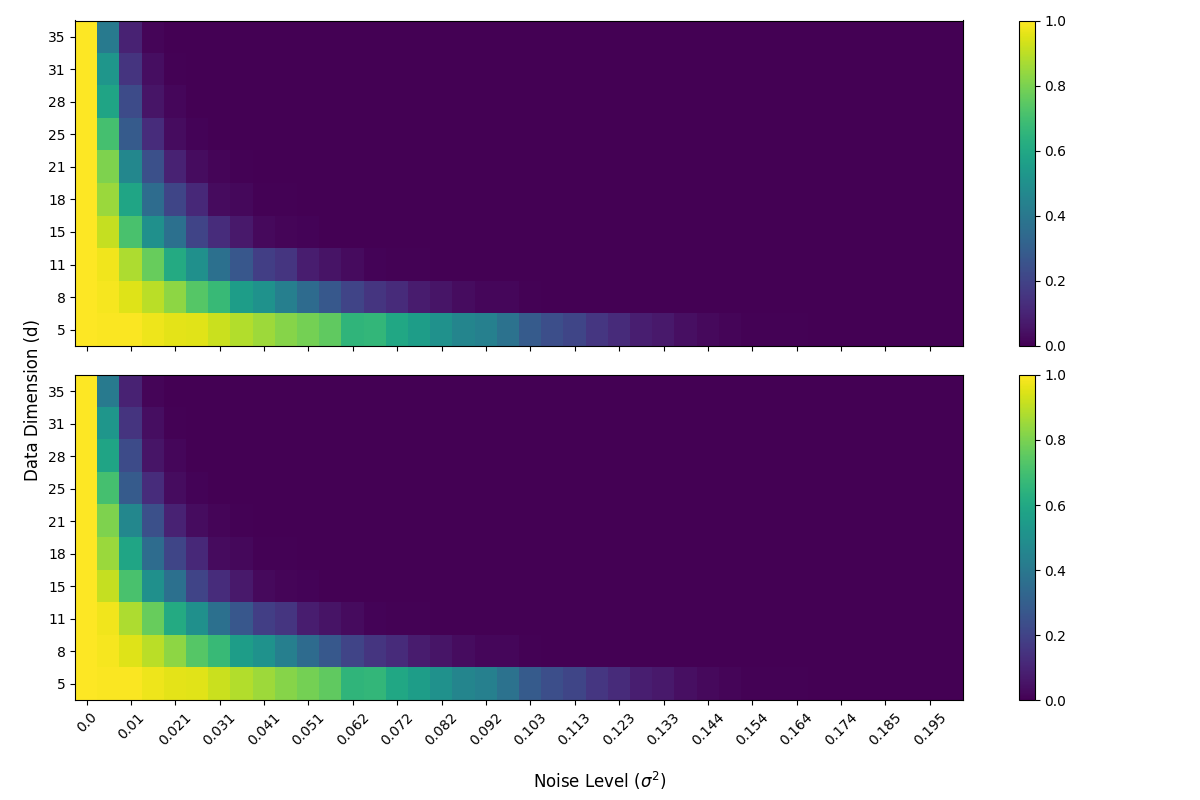}
        \caption{$G = C_2^{d-1}$}
        \label{fig:align_vs_cert:signs}
    \end{subfigure}

    \caption{Empirical probability of alignment (top) versus empirical probability that the Laplacian condition of Corollary~\ref{cor:alignment_L} holds (bottom) for several group actions on Gaussian data. Color indicates probability from purple (near~$0$) to yellow (near~$1$). The representations are (a)~$\pm I_d$, (b)~rotations by multiples of $2\pi/3$ tensored with $I_{d/2}$, (c)~cyclic permutations, (d)~cyclic permutations with reversal, (e)~coordinate permutations, and (f)~even sign flips.}
    \label{fig:align_vs_cert}
\end{figure}

As discussed in Section~\ref{sec:alignment}, aligned data points are optimal orbit representatives for the Fr\'echet mean. 
The Laplacian condition in Corollary~\ref{cor:alignment_L} relaxes this notion of alignment, providing a sufficient condition that is more readily satisfied for some group actions.
While Proposition~\ref{prop.bounds in figure of merit} quantified the difference between these conditions in the special case where $G=\pm$, we now examine other cases by numerical means in Figure~\ref{fig:align_vs_cert}.

We test the probability that random data satisfies either sufficient condition across various noise levels and ambient dimensions.
For each combination of parameters, $n=100$ vectors are drawn independently from a Gaussian distribution centered at a uniformly random point on $S^{d-1}$. 
The horizontal axis captures the variance, while the vertical axis captures the ambient dimension $d$. 
We record the proportion of the $1000$~trials for which the generated data is aligned or satisfies the Laplacian condition.

Figure~\ref{fig:align_vs_cert:pm} illustrates behavior in the case of $G=\pm$ described by  Proposition~\ref{prop.bounds in figure of merit}.
As the ambient dimension increases, smaller noise levels are required for alignment. 
In contrast, the probability that the data satisfies the Laplacian condition is relatively stable. 
Other groups show similar behavior. 
In settings where the ambient dimension is comparable to the size of the group, the Laplacian condition shows a clear improvement over alignment.
These results demonstrate settings in which orbit representatives need not be pairwise aligned to be optimal for the Fr\'echet mean.

Figure~\ref{fig:align_vs_cert:perm} shows that the two sufficient conditions are satisfied with equal probability across all tests. 
This is consistent with our discussion of alignment. 
In the case $G=S_d$, sorting all vectors in the same order simultaneously minimizes every pairwise distance (see Example~\ref{ex:perm_alignment}). 
Consequently, every dataset is alignable, and Fr\'echet-optimal representatives must therefore be aligned by Theorem~\ref{thm:euc_mean}.
While visually similar, this is not the case in Figure~\ref{fig:align_vs_cert:signs}. 
There is in fact a small difference in computed probabilities between the two conditions.
That said, the size of the group in this case ($2^{d-1}$) makes it highly likely that a different set of orbit representatives is Fr\'echet optimal whenever alignment fails.

\subsection{Comparison of Methods}
\label{sec:acc_runtime}

\begin{figure}[t]
    \centering
    \begin{subfigure}{0.48\textwidth}        \includegraphics[width=\linewidth]{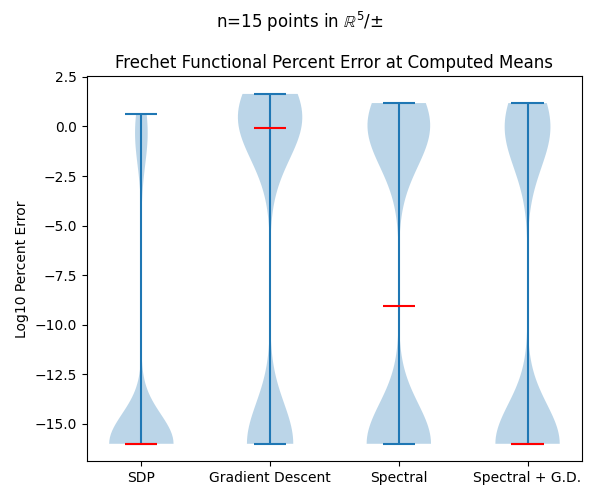}
    \end{subfigure}
    ~
    \begin{subfigure}{0.48\textwidth}
        \includegraphics[width=\linewidth]{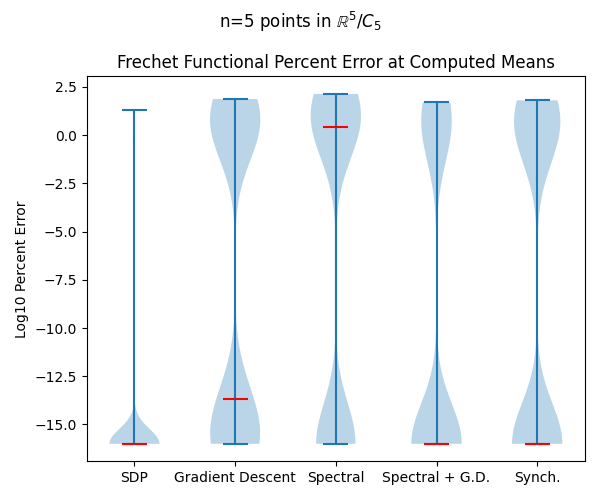}
    \end{subfigure}
    \caption{Distribution of percent error in the Fr\'echet functional at the computed means across $1000$ trials. Median errors are shown in red.}
    \label{fig:error_plots}
\end{figure}

\begin{figure}[t]
    \centering
    \begin{subfigure}{0.48\textwidth}        
    \includegraphics[width=\linewidth]{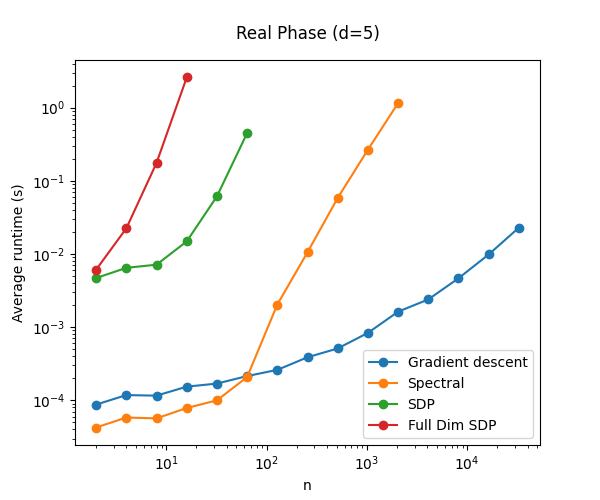}
    \end{subfigure}
    ~
    \begin{subfigure}{0.48\textwidth}
        \includegraphics[width=\linewidth]{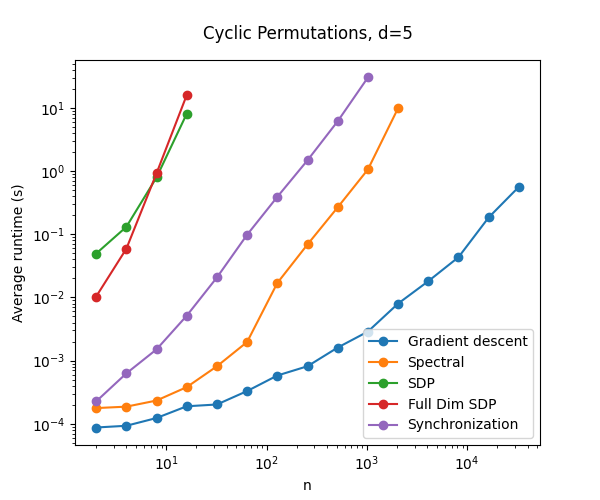}
    \end{subfigure}
    \caption{Runtime to compute an approximate Fr\'echet mean.}
    \label{fig:time_plots}
\end{figure}

Next, we compare the accuracy and runtime of different methods that compute approximate Fr\'echet means.
In order to determine the ground-truth Fr\'echet mean, we choose a small group action acting on a small $n$-point dataset.
In this setting, a brute-force $\mathcal{O}(|G|^{n})$ combinatorial approach is feasible to use as a baseline.

We measure error in the computed Fr\'echet mean using percent error in the Fr\'echet functional~\eqref{eqn:frechet}.
Figure~\ref{fig:error_plots} shows the log-distribution of errors for each method in the settings of $\mathbb{R}^5/\pm$ and $\mathbb{R}^5/C_5$ acting by cyclic permutations. 
Each distribution is computed from $1000$ independently sampled Gaussian datasets with unit variance and a center chosen uniformly at random from the sphere.
For the same settings,
Figure~\ref{fig:time_plots} displays the average runtime of each method over $100$ trials for increasing dataset sizes. 
In the case of $G=\pm$, the synchronization approximation is equivalent to the spectral method and is therefore omitted. 

In both settings, the rounded SDP relaxation performs remarkably well in computing the Fr\'echet mean with median error at machine precision.
The price of this accuracy is runtime, as both SDP methods take significantly longer than the others.
Using the reduced-form SDPs in Sections~\ref{sec:pm_sdp} and~\ref{subsub.sdp cyclic} reduces overall runtime, but they scale similarly with dataset size.

For gradient descent, we used the alternating method for finding Karcher means discussed in Section~\ref{sec:karcher_are_euc}, as it is equivalent to gradient descent with a specific step size. 
We chose to initialize at a uniformly random data point for this comparison.
The error distribution indicates that gradient descent frequently finds a local Karcher mean rather than the true Fr\'echet mean. 
In contrast to an SDP approach, gradient descent is computationally efficient and scales well with large datasets. 

The specific spectral methods derived for these actions demonstrate a useful middle ground in both runtime and accuracy. 
The spectral methods provide an initialization-independent approximation, but often have nonzero errors since they do not necessarily produce even a Karcher mean. 
Using the spectral method to provide an initialization for gradient descent performs better than either the spectral method or gradient descent alone, and scales as well as the spectral method itself. 
This suggests that when $n$, $d$, or $|G|$ is too large for an SDP, performing a spectral approximation followed by gradient steps provides a robust and tractable way to compute an approximate Fr\'echet mean.

\subsection{MNIST Digits}
\label{sub.mnist}
\begin{figure}[ht]
    \centering
    \begin{subfigure}{0.8\linewidth}
        \centering
        \includegraphics[width=\linewidth]{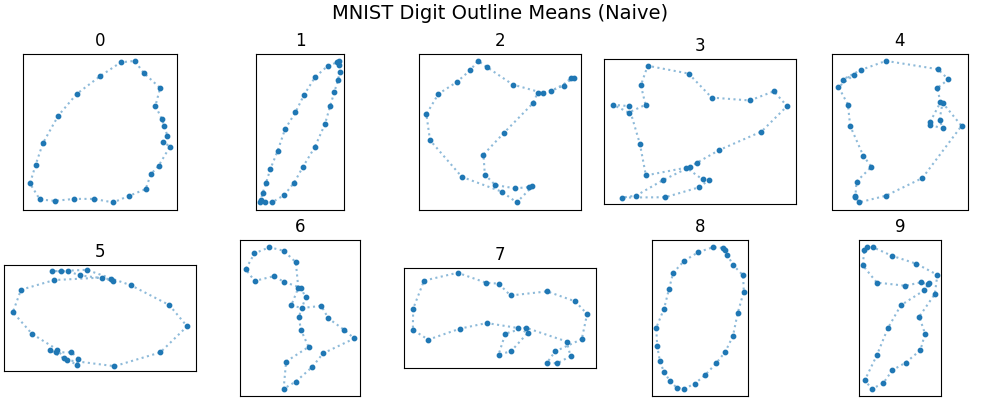}
        \caption{Euclidean means}
    \end{subfigure}
    \hfill
    \begin{subfigure}{0.8\linewidth}
        \centering
        \includegraphics[width=\linewidth]{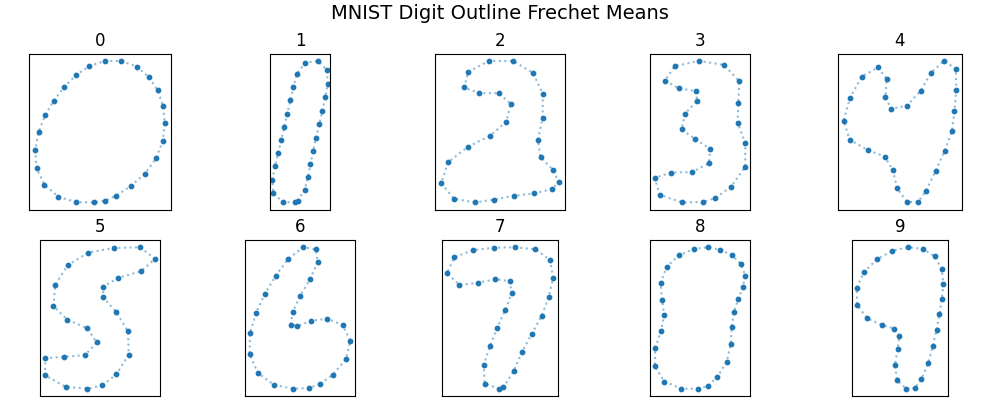}
        \caption{Fr\'echet means in $\mathbb{R}^{25 \times 2}/{C_{25}}$}
    \end{subfigure}
    \caption{Euclidean and approximate Fr\'echet means of MNIST digit outlines.}
    \label{fig:MNIST}
\end{figure}

In this section, we consider a modified version of the MNIST database of handwritten digits. 
Each image is processed to extract the digit boundaries, and $25$ points are sampled uniformly along each boundary in a loop.  
The starting point of this loop is chosen arbitrarily, introducing a cyclic symmetry.
As such, we think of this MNIST outline data as residing in $\mathbb{R}^{25 \times 2} / C_{25}$, where $C_{25}$ acts by cyclically permuting the points along the loop.

We partition the digits according to their digit label and compute the Fr\'echet mean of each class using the spectral method detailed in Section~\ref{sec:cyclic_spectral_algo}. 
The resulting in-class Fr\'echet means are shown in Figure~\ref{fig:MNIST}, which also displays the naive Euclidean means for comparison.
By working directly in the quotient space, we avoid an explicit alignment preprocessing step, which might fail to align the data uniformly.  
The computed Fr\'echet means reflect the true structure of the data, independent of the ambiguity in the starting point.

\subsection{Districting Plans}\label{subsec:districting_plans}

\begin{figure}
    \centering
    
    \begin{subfigure}{0.35\linewidth}
        \centering
        \includegraphics[width=\linewidth]{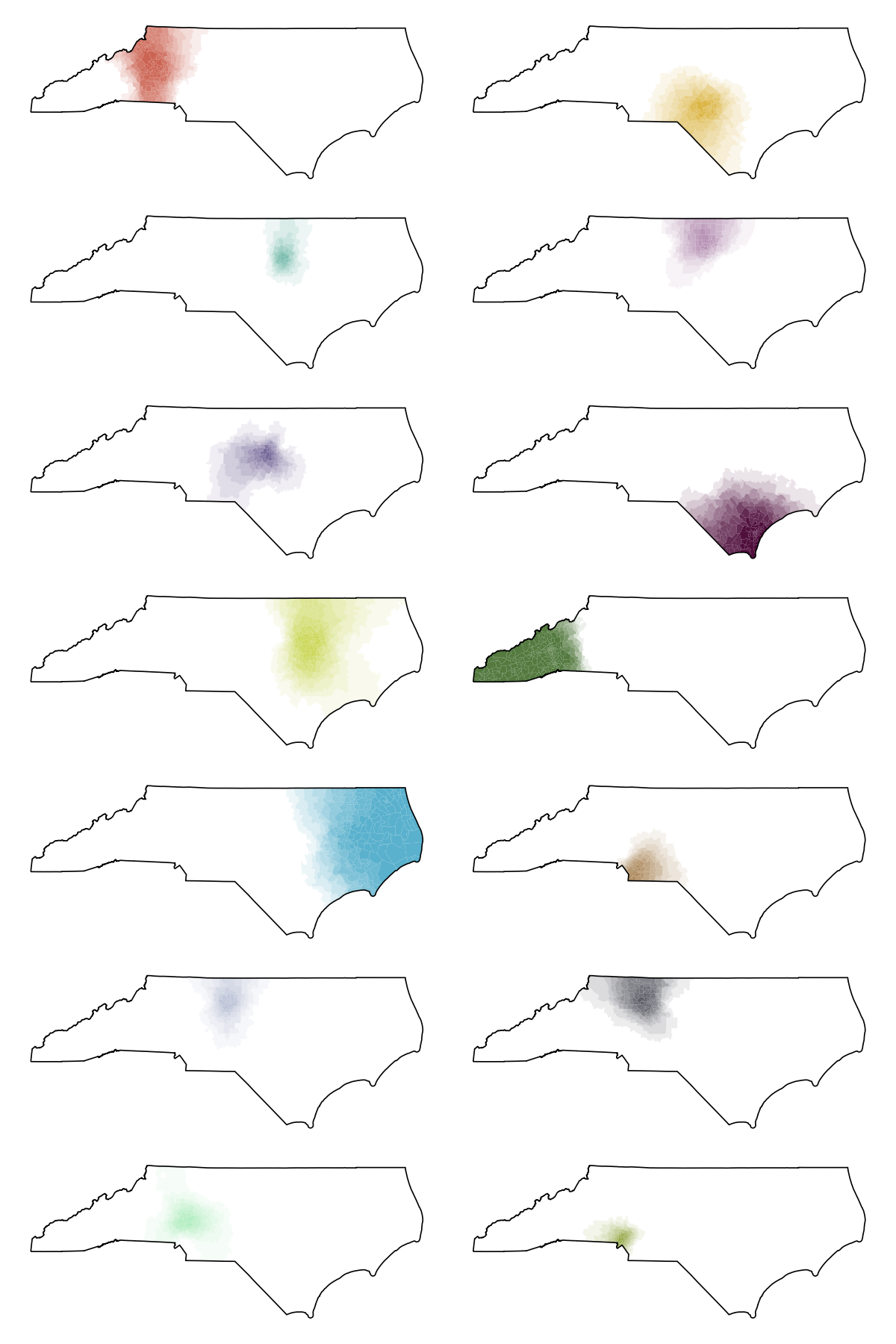}
        \caption{Individual districts}
    \end{subfigure}
    \hfill
    \begin{subfigure}{0.63\linewidth}
        \centering

        \begin{subfigure}{\linewidth}
            \centering
            \includegraphics[width=\linewidth, trim=0 160 0 180, clip]{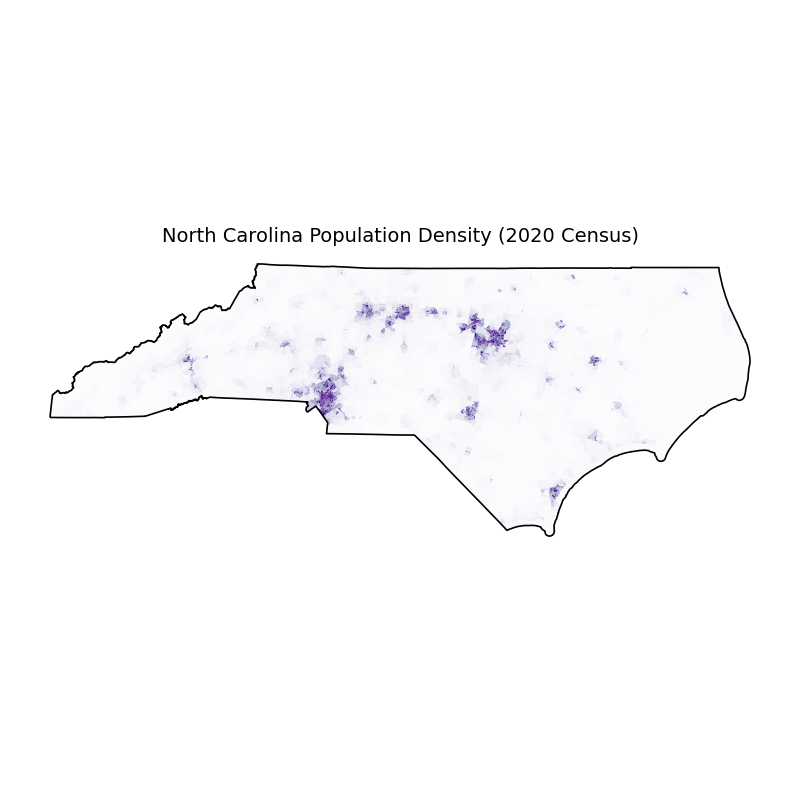}
            \caption{Population density by precinct (2020 Census)}
        \end{subfigure}

        \begin{subfigure}{\linewidth}
            \centering
            \includegraphics[width=\linewidth, trim=0 0 0 25, clip]{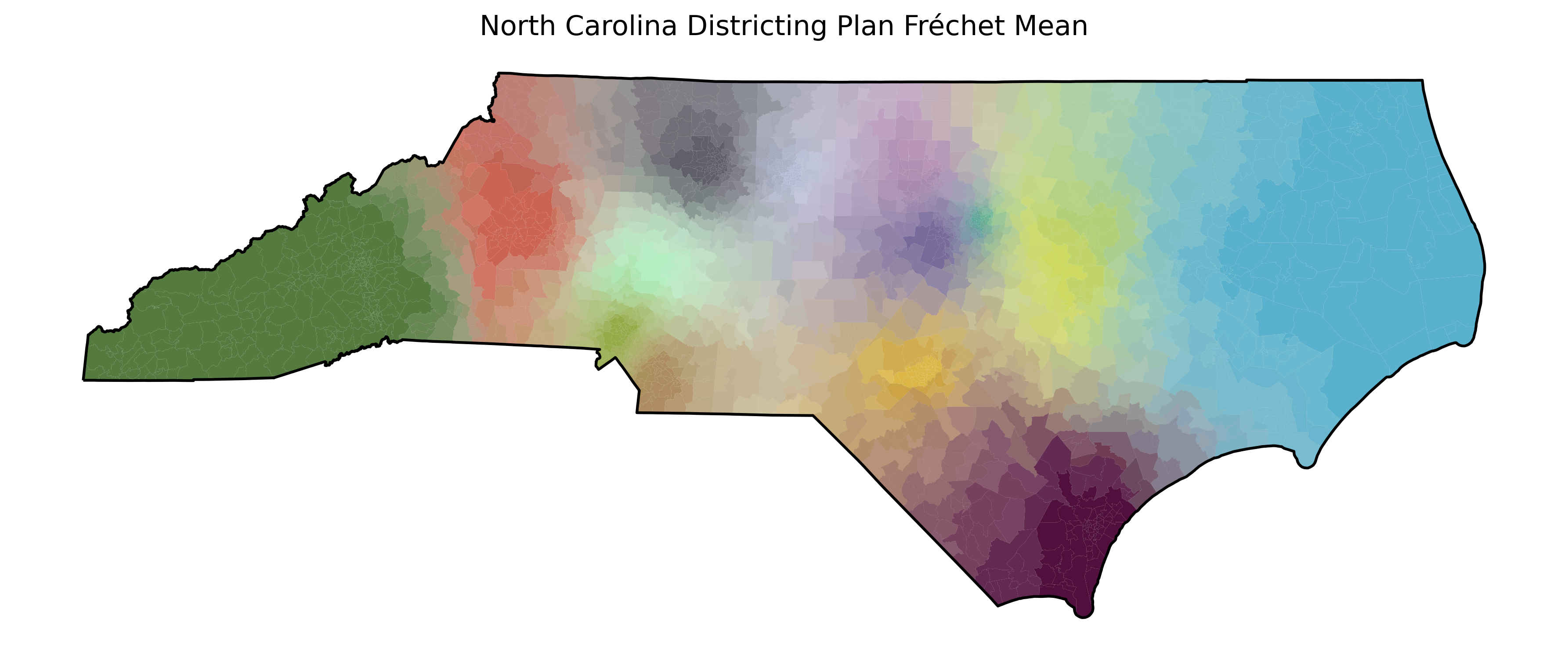}
            \caption{Approximate Fr\'echet mean of $1000$ MCMC plans}
        \end{subfigure}
        
    \end{subfigure}
    
    \caption{Approximate Fr\'echet mean of districting plans in North Carolina.}
    \label{fig:districts}
\end{figure}

In this section, we compute an approximate Fr\'echet mean of congressional districting plans that exhibit an inherent permutation symmetry.
We analyze $1000$ plans generated using the \emph{cycle walk algorithm}, an MCMC-based scheme for sampling graph partitions that was specifically developed to create political districting plans~\cite{deford2025cycle}.
Each plan is represented as a $14 \times 2650$ binary matrix, where entry $(i,j)$ equals $1$
when the $j$th voting precinct is assigned to the $i$th district.
Accordingly, each column contains exactly one nonzero entry.
Since district labels are not canonical, we view this data as residing in the orbit space $\mathbb{R}^{14 \times 2650} / S_{14}$.

To compute an approximate Fr\'echet mean, we apply the spectral approach described in Algorithm~\ref{alg:spectral_general}, followed by gradient descent.
Notably, this data does not require rounding to a corresponding synchronization problem.
The resulting Fr\'echet mean
can be interpreted as a ``fuzzy assignment'' of precincts to districts.
See Figure~\ref{fig:districts} for a visualization. 
For clarity, we separately display each fuzzy district as well as the population. 
A similar experiment was performed in~\cite[Section 5.1]{needhamGeometricAveragesPartitioned2021}, using a different dataset and a different Fr\'echet mean approximation algorithm. In particular, the dataset in that paper used pre-2020 census data, for which North Carolina was assigned 13 districts (rather than the 14 district plans used here). Despite these differences, the majority of the fuzzy districts we obtain here have clear geographic counterparts in the result presented in the previous work.

\subsection{Clustering with Symmetries}
\label{sec:applications}

In the following sections, we implement $k$-means clustering in the context of orbit spaces.
We define $k$-Fr\'echet mean clustering via the optimization problem
\begin{equation}
    \label{eqn:kfrechet}
    \argmin_{\substack{C_1,\dots,C_k \\ C_1 \sqcup \dots \sqcup C_k = \{ 1, \dots, n\}}}
    \sum_{i=1}^k \sum_{j \in C_i} d([\mu_i],[x_j])^2,
\end{equation}
where $\mu_i$ denotes the Fr\'echet mean of the cluster $\{[x_j]\}_{j\in C_i}$ with respect to the quotient metric.

This formulation seeks a partition that minimizes the total within-cluster variance, as measured by the Fr\'echet functional.
This provides a natural way to perform unsupervised clustering of data in an orbit space.
The standard iterative $k$-means algorithm can be readily adapted to this setting by alternating between assigning clusters and computing Fr\'echet means.
Orbits are assigned to clusters according to the nearest cluster mean in the quotient metric, and then cluster means are updated using an appropriate method from Section~\ref{sec:relaxations}. 

\subsubsection{Cylinder, Bell, Funnel}

The Cylinder--Bell--Funnel dataset~\cite{timeseriesclassificationTimeSeries} was introduced to compare classification methods for time series data.
It consists of realizations of three types of time series: cylinders, which resemble indicator functions over a central interval; bells, which exhibit a gradual increase followed by a step down; and funnels, which display a step up followed by a gradual decrease.
Each realization is corrupted by noise in both its amplitude and its time domain, leading to perturbations in signal height and horizontal alignment.
More details can be found in Example~4.7 of~\cite{saito1994local}. 
The data is split into a training set of only $30$ time series of length $128$ and $900$ test samples. 
In this section, we cluster these time series. 

We first apply $k$-means clustering to the training set with $k=3$.
The results of standard $k$-means are shown in the top-left panel of Figure~\ref{fig:CBF}.
Using the resulting cluster centers to classify the test set yields an accuracy of only $65\%$.
Since the data exhibit variability in temporal alignment, the orbit space $\mathbb{R}^{128}/C_{128}$ provides a more natural setting for clustering.
We therefore apply the $k$-Fr\'echet means algorithm to perform clustering in the quotient space, using the spectral method of Section~\ref{sec:cyclic_spectral_algo} followed by gradient descent to compute cluster means in each iteration.
The resulting centroids (defined up to cyclic shift) are shown in the bottom-left panel of Figure~\ref{fig:CBF}.
Visually, this approach recovers the underlying shapes more accurately than standard $k$-means.
In fact, classification using these centroids achieves a $90\%$ accuracy on the test set.

We next focus our attention on the clustering behavior of sliding windows.
We concatenate the Cylinder, Bell, Funnel training set into a single time series of length $3840$ and extract all $3713$ overlapping windows of length $128$.
This setup follows the experiments in Section 4 of~\cite{keogh2005clustering} and Section 3 of~\cite{alexeev2025clustering}.
In both works, applying $k$-means to the sliding window representation is found to be completely meaningless, as the resulting centroids resemble sinusoids (see the top-right panel of Figure~\ref{fig:CBF}).
The authors of~\cite{alexeev2025clustering} attribute this ambiguity to an approximate cyclic symmetry in the sliding window construction: consecutive windows differ by a cyclic permutation and the replacement of one boundary sample.
We instead perform clustering in the orbit space and report the resulting centroids in the bottom-right panel of Figure~\ref{fig:CBF}.
By accounting for this symmetry, the resulting clusters become meaningful.
Using these centroids to classify the test set yields an accuracy of $81\%$, despite the substantial noise introduced by the sliding window representation.

\begin{figure}
    \centering
    \includegraphics[width=\linewidth]{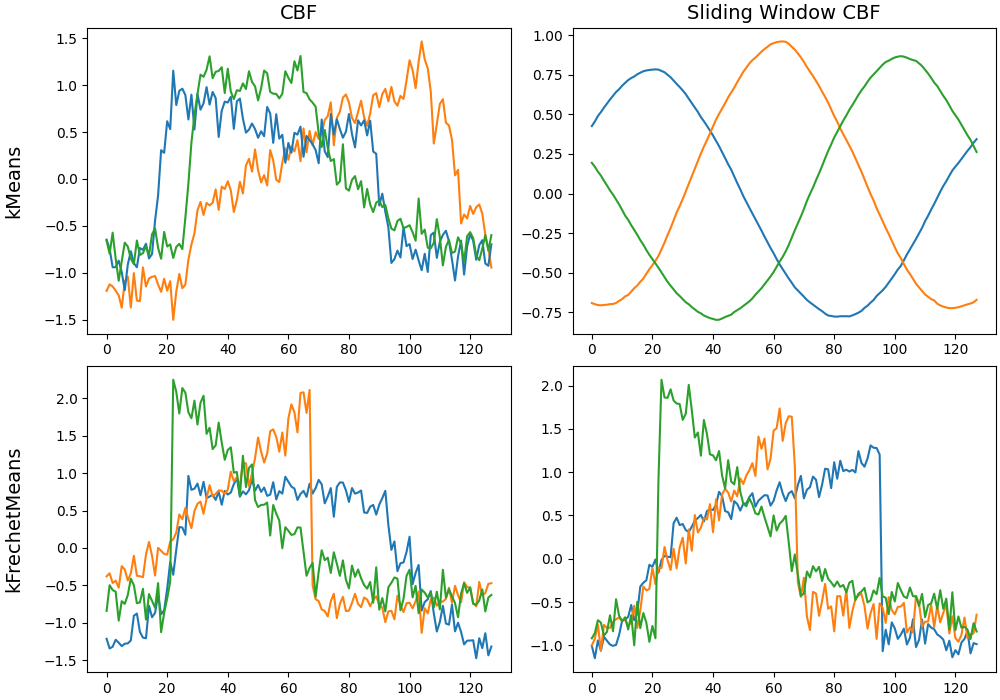}
    \caption{$k$-means centroids from the CBF training set (left) and the sliding-window augmented CBF training set (right). The top and bottom rows show results obtained by working in Euclidean space $\mathbb{R}^{128}$ and the quotient space $\mathbb{R}^{128}/C_{128}$, respectively.}
    \label{fig:CBF}
\end{figure}

\subsubsection{Electrocardiography}

The PTB-XL dataset~\cite{PhysioNet-ptb-xl-1.0.3} consists of $21{,}799$ clinical $12$-lead ECGs from $18{,}869$ patients, each of length 10 seconds sampled at $100$Hz.
Each ECG is annotated by up to two cardiologists, with potentially multiple labels per recording.
We restrict attention to recordings labeled either as myocardial infarction (heart attack) or as healthy ECGs.

We apply several preprocessing steps to the ECG recordings.
First, each recording is truncated to a one-second segment.
We then use a simple peak-detection procedure to ensure that each segment contains a single cardiac peak, padding the remainder with the mean waveform value.
Finally, we normalize each channel to a common scale and center each waveform.
For computational tractability, we subsample $500$ ECGs per class.
The one-second ECG segments each contain a single heartbeat, but are not naturally aligned in time.
To account for this misalignment, we model the data in the orbit space induced by cyclic time shifts, namely, $\mathbb{R}^{100 \times 12}/C_{100}$.

Our first experiment is computing the mean ECG for both classes.
Two leads of the resulting means are shown in the left panel of Figure~\ref{fig:ECG_b}.
The averaged waveforms reveal clear visual differences between healthy subjects and those with myocardial infarction.
In particular, healthy ECGs exhibit a more defined positive bump in lead~$1$ following the main spike and lead 6 shows a distinct downward deflection at the main spike in the infarction class.
We further use these class means to construct a highly interpretable distance-based classifier, labeling a new ECG according to its relative distance to the two Fr\'echet means in the orbit space.
The corresponding ROC curve is shown in Figure~\ref{fig:ROC_a}.
Despite its simplicity,
this classifier achieves an AUC of $0.82$ when evaluated on the held-out test data.

We next perform a separate $k$-Fr\'echet means clustering on the data with $k=2$. 
The resulting centroids are shown on the right of Figure~\ref{fig:ECG_b}. 
Interestingly, these cluster means appear to capture meaningful patterns associated with the underlying classes. 
For example, the post-spike bump in lead~$1$ is clearly present in the orange centroid, while the large negative spike in lead $6$ is reflected in the blue centroid.
In contrast, applying classical $k$-means clustering directly to the ECG data results in a singleton cluster and therefore meaningless centroids. 

\begin{figure}
    \centering
    \begin{subfigure}{0.36\linewidth}
        \centering
        \includegraphics[width=\linewidth]{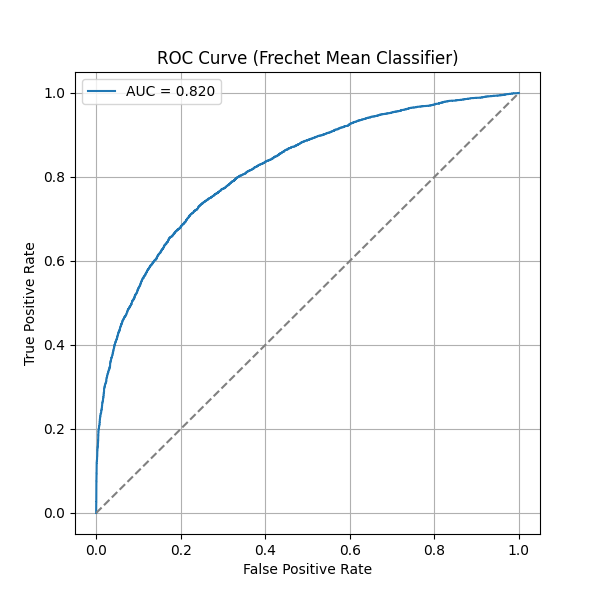}
        \caption{}
        \label{fig:ROC_a}
    \end{subfigure}
    \hfill
    \begin{subfigure}{0.63\linewidth}
        \centering
        \includegraphics[width=\linewidth]{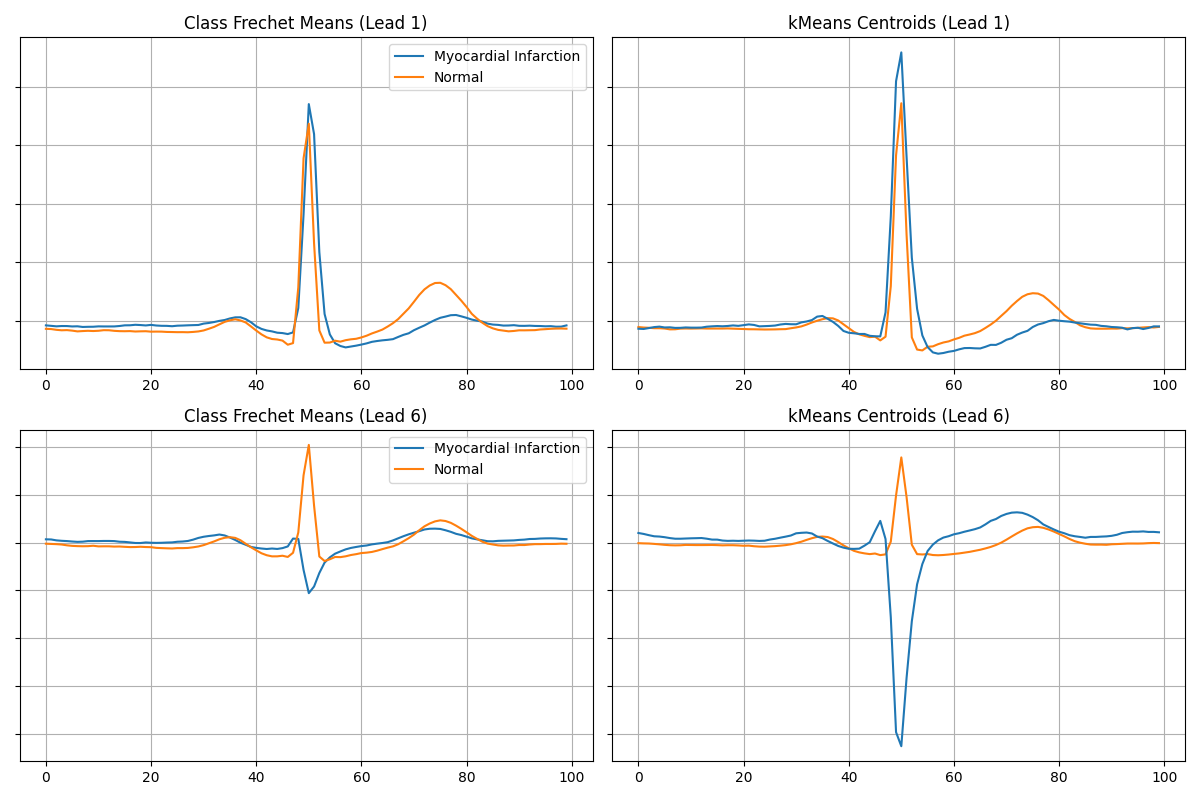}
        \caption{}
        \label{fig:ECG_b}
    \end{subfigure}

    \caption{(a) ROC curve for classifying myocardial infarction based on distance from the Fr\'echet mean of each class. (b) Fr\'echet mean of $500$ randomly sampled heart attack and normal ECG signals alongside the $k$-Fr\'echet-means centroids on the sampled data. The top and bottom panels show two different ECG lead readings.}
    \label{fig:ROC_ECG}
\end{figure}
\section{Discussion}
\label{sec:discussion}

Fr\'echet means in orbit spaces provide a natural notion of averaging in settings with underlying symmetries. 
While computing these means exactly is generally intractable, we introduced a suite of global relaxations that make the problem computationally feasible. 
These relaxations yield practical and robust approaches to approximating Fr\'echet means. 
Moreover, we have established conditions under which a computed solution can be certified as a true Fr\'echet mean using the geometry of the orbit space and semidefinite programming.

This work leaves several directions for future research. 
First, a more principled analysis of the gradient-based approach could give further conditions under which the Fr\'echet functional can be globally minimized quickly. 
Second, many of the numerical methods developed here extend naturally to settings with continuous group actions, such as orthogonal actions on point clouds.
It would be interesting to investigate alignment in these contexts, along with certification via semidefinite programming. 
Finally, more work is needed to better understand how orbit space geometry influences the complexity of finding the Fr\'echet mean.

\section*{Acknowledgments}
\label{sec:acknowledgements}

We would like to thank Thomas Weighill for providing the districting data and the code for the visualization used in Section \ref{subsec:districting_plans}.
DGM was supported in part by NSF grant DMS 2220304 and AFOSR grant FA9550-26-1-B224.
TN was partially supported by NSF grant CIF 2526630.
DGM thanks OpenAI for complimentary access to ChatGPT-6 Pro. 
The authors used ChatGPT to find relevant literature, to help generate the figures, and to give feedback on preliminary versions of this manuscript. 
All mathematical claims, citations, and computations were checked by the authors, who take full responsibility for the content of this paper.

\bibliographystyle{plain} 
\bibliography{frechet}

\appendix

\section{Fr\'echet SDP Dual Program}
\label{sec:dual}

This section is dedicated to the dual problem of the Fr\'echet SDP
\eqref{eqn:fullSDP}. We first detail the derivation of the form used in
Lemma~\ref{lem.dual}, then explicitly demonstrate weak and strong duality.

\subsection{Deriving the Dual}

For convenience, we restate the primal problem:
\[
\begin{aligned}
    \max_{Y\in\mathbb{R}^{nd\times nd}}
    \quad & \left\langle xx^\T ,Y\right\rangle\\
    \text{s.t.}\quad
    &Y\succeq 0,\\
    &Y_{[ij]}\in \mathrm{Conv}(G),
        \qquad 1\leq i<j\leq n,\\
    &Y_{[ii]}=I_d,
        \qquad i=1,\ldots,n.
\end{aligned}
\]

Let \(\lambda_{\ell}^{ij}\) be weights indexed by
\(1\leq i<j\leq n\) and \(1\leq\ell\leq |G|\), let \(E^{ij}\) be the
\(n\times n\) elementary matrix with a one in the \((i,j)\)th entry,
and enumerate \(G=\{g_1,\ldots,g_{|G|}\}\). Define
\[
g_{\ell}^{ij}
{\coloneqq}
E^{ij}\otimes g_\ell+E^{ji}\otimes g_\ell^\T .
\]
Then we can enforce the convex hull constraint explicitly by letting
\[
Y_{[ij]}
=
\sum_{\ell=1}^{|G|}
\lambda_{\ell}^{ij}g_\ell
\]
for each \(1\leq i<j\leq n\). Our SDP obtained by these relaxations is
then
\[
\begin{aligned}
    \max_{Y\in\mathbb{R}^{nd\times nd}}
    \quad &\left\langle xx^\T ,Y\right\rangle\\
    \text{s.t.}\quad
    &Y
    =
    I_{nd}
    +
    \sum_{1\leq i<j\leq n}
    \sum_{\ell=1}^{|G|}
    \lambda_{\ell}^{ij}g_{\ell}^{ij},\\
    &Y\succeq 0,\\
    &\lambda_{\ell}^{ij}\geq 0,
        \qquad
        1\leq i<j\leq n,\quad
        1\leq\ell\leq |G|,\\
    &\sum_{\ell=1}^{|G|}
    \lambda_{\ell}^{ij}=1,
        \qquad
        1\leq i<j\leq n.
\end{aligned}
\]

We can now rewrite the objective in terms of the
\(\lambda_{\ell}^{ij}\):
\begin{align*}
\left\langle xx^\T ,Y\right\rangle
&=
\left\langle
xx^\T ,
I_{nd}
+
\sum_{1\leq i<j\leq n}
\sum_{\ell=1}^{|G|}
\lambda_{\ell}^{ij}g_{\ell}^{ij}
\right\rangle\\
&=
\sum_{i=1}^n\|x_i\|^2
+
\sum_{1\leq i<j\leq n}
\sum_{\ell=1}^{|G|}
\lambda_{\ell}^{ij}
\left\langle xx^\T ,g_{\ell}^{ij}\right\rangle\\
&=
\sum_{i=1}^n\|x_i\|^2
+
2\sum_{1\leq i<j\leq n}
\sum_{\ell=1}^{|G|}
\lambda_{\ell}^{ij}
\left\langle x_i,g_\ell x_j\right\rangle.
\end{align*}
If we define
\[
c_{\ell}^{ij}
{\coloneqq}
\left\langle x_i,g_\ell x_j\right\rangle
\]
and let \(c,\lambda\in
\mathbb{R}^{|G|\binom{n}{2}}\) be indexed by \(i,j,\ell\) as above,
then our primal SDP, after dropping the additive constant
\(\sum_{i=1}^n\|x_i\|^2\) and the constant factor \(2\), becomes
\begin{equation}
\label{eqn:simple_primal}
\begin{aligned}
    \max_{\lambda\geq 0}
    \quad &\langle c,\lambda\rangle\\
    \text{s.t.}\quad
    &\sum_{\ell=1}^{|G|}
    \lambda_{\ell}^{ij}=1,
        \qquad
        1\leq i<j\leq n,\\
    &I_{nd}
    +
    \sum_{1\leq i<j\leq n}
    \sum_{\ell=1}^{|G|}
    \lambda_{\ell}^{ij}g_{\ell}^{ij}
    \succeq 0.
\end{aligned}
\end{equation}
From this rewritten primal, we can now write down the dual program. Letting
\(\beta\in\mathbb{R}^{\binom{n}{2}}\) and
\(V\in\mathbb{R}^{nd\times nd}\), our dual is
\begin{equation}
\label{eqn:original_dual}
\begin{aligned}
    \min_{\beta,V}
    \quad & \beta^\T \mathbf{1}+\operatorname{Tr}(V)\\
    \text{s.t.}\quad
    & \beta^{ij}
    \geq
    c_{\ell}^{ij}
    +
    \left\langle g_{\ell}^{ij},V\right\rangle,
    \qquad
    1\leq i<j\leq n,\quad
    1\leq\ell\leq |G|,\\
    & V\succeq 0.
\end{aligned}
\end{equation}

We next rewrite the dual in a more helpful form. By the sparse
structure of \(g_{\ell}^{ij}\), we can write
\[
c_{\ell}^{ij}
+
\left\langle g_{\ell}^{ij},V\right\rangle
=
\frac12
\left\langle
xx^\T +2V,g_{\ell}^{ij}
\right\rangle.
\]
Also notice that the minimal feasible values of \(\beta^{ij}\) are
attained by
\[
\beta^{ij}
=
\frac12
\max_{\ell\in\{1,\ldots,|G|\}}
\left\langle
xx^\T +2V,g_{\ell}^{ij}
\right\rangle.
\]
We can therefore rewrite the objective as
\begin{align*}
\beta^\T \mathbf{1}+\operatorname{Tr}(V)
&=
\operatorname{Tr}(V)
+
\frac12
\sum_{1\leq i<j\leq n}
\max_{\ell\in\{1,\ldots,|G|\}}
\left\langle
xx^\T +2V,g_{\ell}^{ij}
\right\rangle\\
&=
\frac12
\left(
\operatorname{Tr}(S)
+
\sum_{1\leq i<j\leq n}
\max_{\ell\in\{1,\ldots,|G|\}}
\left\langle
xx^\T +S,g_{\ell}^{ij}
\right\rangle
\right),
\end{align*}
where \(S{\coloneqq}2V\succeq 0\).
Define \(W{\coloneqq}xx^\T +S\), and let
\[
\mathcal{H}
{\coloneqq}
\left\{
H\in\mathbb{S}^{nd}:
\begin{array}{l}
H_{[ii]}=I_d,\quad i=1,\ldots,n,\\
H_{[ij]}\in G,\quad 1\leq i<j\leq n
\end{array}
\right\}.
\]
For a fixed \(W\), the maximum over \(H\in\mathcal{H}\) is attained
by choosing
\[
H_{[ij]}
\in
\argmax_{g\in G}
\left\langle g,W_{[ij]}\right\rangle
\]
for each \(1\leq i<j\leq n\), and setting
\(H_{[ji]}=H_{[ij]}^\T \). It follows that
\begin{align*}
\max_{H\in\mathcal{H}}\langle H,W\rangle
&=
\operatorname{Tr}(W)
+
\sum_{1\leq i<j\leq n}
\max_{\ell\in\{1,\ldots,|G|\}}
\left\langle W,g_{\ell}^{ij}\right\rangle\\
&=
\sum_{i=1}^n\|x_i\|^2
+
\operatorname{Tr}(S)
+
\sum_{1\leq i<j\leq n}
\max_{\ell\in\{1,\ldots,|G|\}}
\left\langle xx^\T +S,g_{\ell}^{ij}\right\rangle.
\end{align*}
Hence,
\[
\beta^\T \mathbf{1}+\operatorname{Tr}(V)
=
\frac12
\left(
\max_{H\in\mathcal{H}}\langle H,W\rangle
-
\sum_{i=1}^n\|x_i\|^2
\right).
\]
Restoring the additive constant and factor \(2\) that were removed in
passing from the original primal objective to \eqref{eqn:simple_primal},
we obtain the equivalent dual formulation
\[
    \min_{W\succeq xx^\T }
    \max_{H\in\mathcal{H}}\,\langle H,W\rangle.
\]

\subsection{Weak and Strong Duality}

Here we demonstrate weak duality between the
Fr\'echet SDP and its dual explicitly. Let \(Y\) be primal feasible
and let \(W\succeq xx^\T \) be dual feasible. Since
\(Y\succeq 0\) and \(W-xx^\T \succeq 0\), we have
\[
\left\langle Y,W-xx^\T \right\rangle\geq 0.
\]
It follows that
\begin{align*}
\left\langle Y,xx^\T \right\rangle
&\leq
\langle Y,W\rangle\\
&=
\sum_{i=1}^n
\left\langle I_d,W_{[ii]}\right\rangle
+
2\sum_{1\leq i<j\leq n}
\left\langle Y_{[ij]},W_{[ij]}\right\rangle\\
&\leq
\sum_{i=1}^n
\left\langle I_d,W_{[ii]}\right\rangle
+
2\sum_{1\leq i<j\leq n}
\max_{g\in G}
\left\langle g,W_{[ij]}\right\rangle\\
&=
\max_{H\in\mathcal{H}}
\langle H,W\rangle.
\end{align*}
The second inequality follows from
\(Y_{[ij]}\in \conv G\), and the last line follows from the definition
of \(\mathcal{H}\).

The SDP exhibits strong duality, following from the dual form of Slater's condition. 
Consider the form of the dual in \eqref{eqn:original_dual}. 
Taking
\(V=I_{nd}\succ 0\) will give an interior feasible point. 
For \(i<j\), the nonzero blocks of
\(g_{\ell}^{ij}\) are off-diagonal, whereas the nonzero blocks of
\(I_{nd}\) are diagonal. Therefore
\[
\left\langle g_{\ell}^{ij},I_{nd}\right\rangle=0.
\]
We may now choose
\[
\beta^{ij}
>
\max_{\ell\in\{1,\ldots,|G|\}}
c_{\ell}^{ij}
\]
for every \(1\leq i<j\leq n\). This gives a strictly feasible dual
point. Since the primal is feasible and has a finite optimal value,
strong duality holds for every finite \(G\).

\end{document}